\documentclass{article}

\usepackage[
  lang=american,
]{ems-journal}
\usepackage{mathtools}
\usepackage{microtype}

\newcommand{\Z}{\mathbb Z}
\newcommand{\R}{\mathbb R}
\newcommand{\C}{\mathbb C}
\newcommand{\T}{\mathbb T}

\newcommand{\dd}{\mathop{}\!\mathrm{d}}
\newcommand{\one}{\mathbf{1}}

\DeclareMathOperator{\Id}{Id}

\theoremstyle{plain}
\newtheorem{theorem}{Theorem}
\newtheorem{proposition}[theorem]{Proposition}
\newtheorem{lemma}[theorem]{Lemma}
\newtheorem{corollary}[theorem]{Corollary}
\newtheorem{remark}[theorem]{Remark}
\newtheorem{definition}[theorem]{Definition}

\numberwithin{equation}{section}

\begin{document}

\title{Dimension-free estimates for full discrete maximal functions associated with Euclidean balls and spheres}
\titlemark{Dimension-free estimates for discrete maximal functions of balls and spheres}

\emsauthor{1}{
  \givenname{Mahdi}
  \surname{Hormozi}}{M.~Hormozi}

\Emsaffil{1}{
  \organisation{Beijing Institute of Mathematical Sciences and Applications}
  \zip{101408}
  \city{Beijing}
  \country{China}
  \affemail{hormozi@bimsa.cn}}

\emsauthor{2}{
  \givenname{Jakub}
  \surname{Niksi{\'n}ski}}{J.~Niksi{\'n}ski}

\Emsaffil{2}{
  \department{Department of Mathematics}
  \organisation{Rutgers University}
  \address{Hill Center -- Busch Campus, 110 Frelinghuysen Road}
  \zip{08854-8019}
  \city{Piscataway, NJ}
  \country{USA}
  \affemail{jakub.niksinski.math@gmail.com}}

\emsauthor{3}{
  \givenname{B{\l}a{\.z}ej}
  \surname{Wr{\'o}bel}}{B.~Wr{\'o}bel}

\Emsaffil{3}{
  \department{1}{}
  \organisation{1}{Institute of Mathematics of the Polish Academy of Sciences}
  \address{1}{{\'S}niadeckich 8}
  \zip{1}{00-656}
  \city{1}{Warsaw}
  \country{1}{Poland}

  \department{2}{Institute of Mathematics}
  \organisation{2}{University of Wroc{\l}aw}
  \address{2}{Plac Grunwaldzki 2}
  \zip{2}{50-384}
  \city{2}{Wroc{\l}aw}
  \country{2}{Poland}
  \affemail{2}{bwrobel@impan.pl}}

\keywords{dimension-free estimates, discrete Hardy--Littlewood maximal function,
Euclidean balls, Euclidean spheres}

\hypersetup{
  pdftitle={Dimension-free estimates for full discrete maximal functions associated with Euclidean balls and spheres},
  pdfauthor={Mahdi Hormozi, Jakub Niksinski, Blazej Wrobel},
  pdfsubject={Dimension-free estimates for full discrete maximal functions associated with Euclidean balls and spheres},
  pdfkeywords={dimension-free estimates, discrete Hardy--Littlewood maximal function,
Euclidean balls, Euclidean spheres}
}

\begin{abstract}
We prove that the full discrete Hardy--Littlewood maximal operator associated with Euclidean balls satisfies dimension-free bounds on $\ell^p(\mathbb Z^d)$ for every $1<p<\infty$. We also establish analogous dimension-free bounds for the full discrete spherical maximal operator when $d\geq 5$ and $2\leq p<\infty$. {The main new idea is to approximate the relevant Fourier multipliers by finite linear combinations of normalized discrete Gaussian multipliers and their translates. We obtain these approximations uniformly in frequency and with uniformly bounded coefficients through a refined saddle-point analysis.} The ball result resolves a question of E.~M.~Stein, while the spherical result gives, in the range $p\geq 2$, a dimension-free strengthening of the theorem of Magyar, Stein, and Wainger.

\end{abstract}

\makeatletter
\let\ps@titlepage\ps@empty
\makeatother
\maketitle

\section{Introduction}

\subsection{Statement of the results}

For an integer \(d\geq 1\) and \(t\geq 0\), set
\begin{equation*}
%\label{eq:euclidean-ball-definition}
 \mathcal B_t^d
 =\left\{x\in\mathbb Z^d:\ |x|^2=x_1^2+\cdots+x_d^2\leq t\right\},
\end{equation*}
that is, \(\mathcal B_t^d\) is the set of lattice points in $\Z^d$ which lie inside the closed Euclidean ball of radius $\sqrt{t}$.
For \(f:\mathbb Z^d\to\mathbb C\) and \(x\in\mathbb Z^d\), define the Hardy--Littlewood averaging operator over these discrete balls
\begin{equation}\label{eq:euclidean-averages}
 \mathcal M_t^df(x)
 =\frac1{|\mathcal B_t^d|}\sum_{y\in\mathcal B_t^d}f(x-y)
\end{equation}
and the corresponding Hardy--Littlewood maximal function
\begin{equation*}
 \mathcal M_*^df(x)=\sup_{t\geq 0}|\mathcal M_t^df(x)|.
\end{equation*}
Since $\mathcal B_t^d=\mathcal B_n^d$ whenever $n=\lfloor t\rfloor$, we have
\begin{equation*}
%\label{eq:euclidean-maximal}
 \mathcal M_*^df(x)=\sup_{n\geq 0}|\mathcal M_n^df(x)|.
\end{equation*}
We prove that $\mathcal M_*^d$ has dimension-free bounds on all $\ell^p(\Z^d)$ spaces with $1<p<\infty$.
\begin{theorem}\label{thm:main}
For every \(1<p\leq\infty\), there is a constant \(C_p<\infty\), independent
of \(d\), such that, for every \(d\geq 1\) and every
\(f\in\ell^p(\mathbb Z^d)\),
\[
 \|\mathcal M_*^df\|_{\ell^p(\mathbb Z^d)}
 \leq C_p\|f\|_{\ell^p(\mathbb Z^d)}.
\]
\end{theorem}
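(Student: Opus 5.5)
We will reduce the maximal estimate for balls to a maximal estimate for a dimension-free family of semigroup-type operators. The tool suggested in the abstract is normalized discrete Gaussian multipliers. For \(s>0\) let
\[
g_s(\xi)=\prod_{j=1}^d\frac{\vartheta_s(\xi_j)}{\vartheta_s(0)},\qquad \vartheta_s(\theta)=\sum_{k\in\Z}e^{-\pi k^2/s}e^{2\pi i k\theta}.
\]
This is the multiplier of convolution with the probability measure \(\propto e^{-\pi|x|^2/s}\) on \(\Z^d\), which is a tensor product of one-dimensional symmetric Markov kernels. Likewise, the translates \(g_s(\xi)e^{2\pi i \langle a,\xi\rangle}\) (or modulations in the variable \(s\)) are multipliers of positive kernels. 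The multiplier of \(\mathcal M_n^d\) is \(m_n(\xi)=|\mathcal B_n^d|^{-1}\sum_{|x|^2\le n}e^{2\pi i x\cdot \xi}\).

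By Cauchy's formula, with \(\Theta(z)=\sum_{k\in\Z} z^{k^2}\) (so \(\prod_j\Theta(z e^{2\pi i\xi_j\cdot})\) generates the \(|x|^2\)-level sums), we can write
\[
\sum_{|x|^2\le n}e^{2\pi i x\cdot\xi}=\frac1{2\pi i}\oint \frac{\prod_j \Theta_{\xi_j}(z)}{(1-z)z^{n+1}}\,dz .
\]
A saddle-point analysis on the circle \(|z|=e^{-\pi/s}\) relates this to \(g_s\). Here \(s=s(n,d)\) is chosen so that the Gaussian of parameter \(s\) has mean \(\mathbb E|x|^2\approx n\); the saddle sits there.

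The core step is an approximation of the following form. There are finitely many coefficients \(c_k\), uniformly bounded with \(\sum|c_k|\le C\) independent of \(d,n\), and parameters \(s_k=s_k(n,d)\) and shifts \(\tau_k\), such that
\[
\Bigl|m_n(\xi)-\sum_{k}c_k\, g_{s_k}(\xi)\,\psi_k(\xi)\Bigr|\le \varepsilon_n(\xi),
\]
where \(\psi_k\) are unimodular phase factors realized as \(g\) evaluated at complex \(s\) (i.e. translates along the saddle contour). The error is required to satisfy \(\sum_{n\in D}\|\varepsilon_n\|_\infty^2\)-type or square-function bounds over dyadic \(n\), uniformly in \(d\). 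Concretely, we aim for errors like \(\min\{(n\kappa(\xi)^2)^{-1/2}\cdots\}\) with \(\kappa(\xi)^2=\sum_j\sin^2(\pi\xi_j)\), and decay both in \(n\kappa^2\) and \((n\kappa^2)^{-1}\), with constants independent of \(d\). The main obstacle is making this uniform in \(\xi\) and \(d\) with bounded coefficients. The saddle-point Gaussian width depends on \(d/n\), and one must treat the regimes \(n\ll d\), \(n\sim d\) and \(n\gg d\) (both "large ball" and "tiny ball", where most coordinates vanish). I would first try to show that the logarithm of the integrand is uniformly strictly concave near the saddle in terms of the variance of \(|x|^2\) under the Gaussian measure, and expand to finite order to obtain the finite linear combination.

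Given the approximation, the maximal bound follows in three steps.

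1. Maximal operators \(\sup_s|G_s f|\) for the discrete Gaussian (heat-type) operators, including their translates by complex \(s\) in a bounded sector, are bounded on \(\ell^p(\Z^d)\) dimension-free for all \(1<p\le\infty\). This uses Stein's maximal theorem for symmetric diffusion semigroups applied to \(G_{s}=\bigotimes_j G^{(1)}_s\), which is a symmetric Markov semigroup in a suitable parametrization. The complex-time extensions are handled via analyticity of the semigroup in a sector, bounded on \(\ell^p\) dimension-free for \(1<p<\infty\).
2. The error terms are controlled on \(\ell^2\) by a square function \(\bigl(\sum_{n}|T_{\varepsilon_n}f|^2\bigr)^{1/2}\), using the decay estimates and dyadic blocks in \(n\). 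They are then extended to \(\ell^p\), \(1<p<\infty\), by interpolation with a trivial (possibly \(d\)-dependent-free, since the kernels are averages) \(\ell^1\to\ell^{1,\infty}\) or \(\ell^\infty\) bound through the method of Bourgain/Stein for dimension-free estimates. The \(\ell^\infty\) case of the statement is trivial.
3. Between dyadic scales, one compares \(\mathcal M_n\) with \(\mathcal M_{2^k}\) using the same multiplier estimates, controlling the variation in \(n\).
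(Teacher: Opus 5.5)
\textbf{Gap 1: the approximation is never specified, and the family you name cannot work.} You need, besides the real normalized Gaussians $\widehat{g^d_r}(\xi)$, their \emph{frequency} translates $\widehat{g^d_{\widetilde r}}(\xi-\mathbf h)$, where $\mathbf h=(\frac12,\ldots,\frac12)$. The reason is a mismatch at $\mathbf h$:
\begin{itemize}
\item for $n$ small compared with $d$, the outer sphere dominates the ball and $m_n^d(\mathbf h)$ is close to $(-1)^n$;
\item the Gaussians matched to the saddle point satisfy $|\widehat{g^d_r}(\mathbf h)|\le e^{-cdr}$, and $dr\asymp\min\{n,d\}$.
\end{itemize}
The translates have signed kernels $(-1)^{x_1+\cdots+x_d}g^d_r(x)$. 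They are controlled by conjugating with the isometry $Jf(x)=(-1)^{x_1+\cdots+x_d}f(x)$, not by positivity. Spatial translates $g_s(\xi)e^{2\pi i a\cdot\xi}$ do not supply them.

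Your ``$g$ at complex $s$'' factors are not unimodular, and no maximal theorem covers them: their kernels are $g^d_r(x)e(t|x|^2)$, Gaussians times quadratic chirps. The only useful contour point is $z=-r$, which by $H_{\theta+1/2}(z)=H_\theta(-z)$ gives exactly the $\mathbf h$-translate.

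The paper integrates the contour out. It Taylor-expands $R_{\varepsilon,\eta}(t)$ at $t=0$, with derivative bounds uniform in $d$ coming from Jacobi's product formula (Lemma~\ref{lemma: bound for quotient of H(z)}). It rewrites the $t$-derivatives as derivatives of real Gaussians in $s=\log\frac{r}{1-r}$, and replaces these by finite differences with step $h=\kappa^{-1/2}$. The decay $|b_{k,d,n}|\lesssim\kappa^{-k/2}$ absorbs $h^{-j}$, which is why the coefficients stay bounded.

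Your Step~1 is also false as justified. Normalized discrete Gaussians are not a semigroup in any parametrization, since convolving two does not give a third. So Stein's diffusion-semigroup theorem does not apply. The needed bound for $\sup_{0<r<1}|\mathcal G^d_rf|$ is the nontrivial Mirek--Szarek--Wr\'obel theorem (Theorem~\ref{thm:gaussian}), which you must cite.

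\textbf{Gap 2: the passage to the \emph{full} maximal function.} Errors like $\min\{n\kappa(\xi)^2,(n\kappa(\xi)^2)^{-1/2}\}$ are of size $1$ when $n\kappa(\xi)^2\approx 1$. With dyadic square functions they can at best recover the known dyadic theorem. Your Step~3 (``controlling the variation in $n$'') is exactly what earlier work could not do dimension-free, and you give no mechanism for it.

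The paper makes the error uniform in $\xi$ and of size $O_N(\min\{n,d\}^{-N})$ for every $N$. Then $\sup_n$ can be replaced crudely by $\sum_n$ over $1\le n\le d^4$. Let $\mathcal T_{n,N}^d$ be the operator with the approximating multiplier. Interpolating
\begin{itemize}
\item $\|\mathcal M_n^d-\mathcal T_{n,N}^d\|_{2\to2}\lesssim_N\min\{n,d\}^{-N}$ (Parseval), with
\item $\|\mathcal M_n^d-\mathcal T_{n,N}^d\|_{1\to1}\lesssim_N1$ (bounded coefficients, $\ell^1$-contractive kernels)
\end{itemize}
gives $\min\{n,d\}^{-4}$ on $\ell^p$ for $N=N(p)$. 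Since $\sum_{n\le d^4}\min\{n,d\}^{-4}\lesssim1$, this closes. It also settles $1<p<2$, where your ``trivial'' weak-$(1,1)$ bound for an error square function is unfounded.

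This summation fails for large $n$, because there $\min\{n,d\}=d$. Your proposal never addresses that regime. The paper covers $n>Cd^2$ by the comparison-principle estimate \eqref{eq:max ball large scale} of Bourgain, Mirek, Stein and Wr\'obel, using $d^4>Cd^2$ for large $d$.
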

\noindent Our result answers in the affirmative a question of E.~M.~Stein \cite{SteinPC}, who asked whether
\begin{equation}
\label{eq:SQ}
 \sup_{d\geq 1}
 \|\mathcal M_*^d\|_{\ell^2(\mathbb Z^d)\to\ell^2(\mathbb Z^d)}<\infty;
\end{equation}
see \cite[p.~903]{MirekSzarekWrobelSpheres} and \cite[p.~3]{MirekSzarekWrobelGaussians}. Theorem~\ref{thm:main} not only answers
this question but gives a dimension-free bound for every
\(1<p\leq\infty\). It is a natural discrete counterpart of a celebrated result of E.~M.~Stein for continuous balls on $\R^d$ \cite[Theorem~13, p.~374]{Stein1982} (with a detailed proof given by Stein and Str\"omberg \cite[Appendix, pp.~266--269]{SteinStromberg1983}), which is one of the cornerstones of high-dimensional harmonic analysis. Actually, Theorem \ref{thm:main} {also implies} Stein's continuous result. This is because the optimal constant in the corresponding continuous
maximal inequality is always {bounded above} by its discrete counterpart;
see \cite[Theorem~1]{KoszMirekPlewaWrobel} for this comparison in the more general
setting of symmetric convex bodies.

We also prove a corresponding dimension-free estimate for the spherical maximal operator. For \(d\geq 5\) and an integer \(n\geq 0\), let
\begin{equation*}
%\label{eq:euclidean-sphere-definition}
 \mathcal S_n^d
 =\left\{x\in\mathbb Z^d:\ |x|^2=x_1^2+\cdots+x_d^2=n\right\},
\end{equation*}
that is, $\mathcal S_n^d$ is the set of lattice points in $\Z^d$ which lie on the Euclidean sphere of radius $\sqrt{n}$. For \(f:\mathbb Z^d\to\mathbb C\) and \(x\in\mathbb Z^d\), define the spherical averaging operator
\begin{equation}\label{eq:spherical-averages}
 \mathcal A_n^df(x)
 =\frac1{|\mathcal S_n^d|}\sum_{y\in\mathcal S_n^d}f(x-y)
\end{equation}
and the corresponding spherical maximal function
\begin{equation*}
%\label{eq:spherical-maximal}
 \mathcal A_*^df(x)=\sup_{n\geq 0}|\mathcal A_n^df(x)|.
\end{equation*}
We prove that $\mathcal A_*^d$ has dimension-free bounds on all $\ell^p(\Z^d)$ spaces with $2\leq p<\infty$.
\begin{theorem}\label{thm:main:sphere}
For every \(2\leq p\leq\infty\), there is a constant \(C_p<\infty\), independent
of \(d\), such that, for every \(d\geq 5\) and every
\(f\in\ell^p(\mathbb Z^d)\),
\[
 \|\mathcal A_*^df\|_{\ell^p(\mathbb Z^d)}
 \leq C_p\|f\|_{\ell^p(\mathbb Z^d)}.
\]
\end{theorem}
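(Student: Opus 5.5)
The plan is to reduce the spherical maximal operator to maximal operators built from normalized discrete Gaussian multipliers, for which dimension-free bounds are available by semigroup methods. Write $\sigma_n(\xi)=|\mathcal S_n^d|^{-1}\sum_{y\in\mathcal S_n^d}e^{2\pi i y\cdot\xi}$ for the multiplier of $\mathcal A_n^d$, with $\xi\in\T^d$. The discrete Gaussian with parameter $t>0$ has multiplier
\[
 g_t(\xi)=\prod_{j=1}^d\frac{\vartheta(\xi_j,t)}{\vartheta(0,t)},\qquad \vartheta(\eta,t)=\sum_{m\in\Z}e^{-\pi t m^2}e^{2\pi i m\eta}.
\]
This operator is a product of one-dimensional positive Markov kernels, and the family $t\mapsto g_t$ is subordinated to a symmetric diffusion semigroup in each coordinate. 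Stein's maximal theorem for symmetric diffusion semigroups therefore gives $\|\sup_t|G_tf|\|_{\ell^p}\le C_p\|f\|_{\ell^p}$ for $1<p\le\infty$, uniformly in $d$. The same holds for modulated translates $G_t^{(\theta)}$, where the multiplier is $g_t(\xi-\theta)$ for fixed shifts $\theta\in\{0,1/2\}^d$, or more generally conjugation by characters. These are the building blocks.

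\textbf{Step 1: saddle-point representation.} By Fourier inversion on the circle,
\[
 |\mathcal S_n^d|\,\sigma_n(\xi)=\int_0^1\prod_{j}\Theta(\xi_j,z)\,e^{-2\pi i n\alpha}\,\dd\alpha, \qquad z=\alpha+i s,
\]
with $\Theta(\eta,z)=\sum_m e^{2\pi i z m^2+2\pi i m\eta}$. The height $s=s_n$ is chosen by the saddle-point equation
\[
 \sum_j\partial_s\log\Theta(0,is)=-2\pi n,
\]
uniformly in $d$. Here $d\ge5$ is essential, so that the minor arcs and the other rational neighbourhoods in $\alpha$ are controlled via Gauss-sum bounds $|G(a/q)|^d/q^d\lesssim q^{-d/2}$, summable when $d\ge5$. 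The major arc near $\alpha=0$ then produces $g_{t(n)}(\xi)$. The major arcs near $a/q$ produce multipliers $\sum_{\ell\in\Z^d/q}\,\prod_j G(a,\ell_j,q)/q\;\cdot g_{t}(\xi-\ell/q)$.

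\textbf{Step 2: finite approximation.} I will show that, uniformly in $d\ge5$, $n$ and $\xi$, we have $\sigma_n=\sum_{k=1}^K c_{k}\,g_{t_k(n)}(\xi-\theta_k)+E_n(\xi)$. Here $K$ and $\sup|c_k|$ are bounded independently of $d$, and the error satisfies a square-function bound $\|\sup_n|E_n f|\|_{\ell^2}\le\sum_n\|E_n\|_\infty$-type estimate, or better $\sum_{N\text{ dyadic}}\big(\sum_{n\sim N}|E_n|^2\big)^{1/2}\lesssim 1$ after a Rademacher--Menshov/variational argument in $n$ within dyadic blocks. The refined saddle-point analysis is needed exactly to make the coefficients bounded and the remainder decay uniformly in dimension. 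For large $d$ the Gaussian product concentrates sharply, and the Hessian of $\sum_j\log\Theta$ scales like $d$, which helps. For small $n$ relative to $d$ (e.g.\ $n<d$), the lattice sphere lies near $\{0,\pm1\}$-vectors. I treat that regime separately, comparing $\sigma_n$ with $\prod_j(1-p+p\cos 2\pi\xi_j)$-type multipliers, which are again Markov and covered by Stein's semigroup theorem (as in Bourgain–Mirek–Stein–Wróbel for small scales).

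\textbf{Step 3: conclude.} On $\ell^2$ the maximal operator is bounded by $\sum_k|c_k|\sup_t|G^{(\theta_k)}_t f|$ plus the error maximal function. Both are dimension-free. For $p=\infty$ the bound is trivial. Interpolating between the $\ell^2$ bound for the error part and the trivial $\ell^\infty$ bound (the Gaussian pieces are bounded on all $\ell^p$) gives $2\le p\le\infty$. The restriction $p\ge2$ reflects that the error is only controlled in $\ell^2$.

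The main obstacle is Step 2: producing, uniformly in $\xi\in\T^d$ (not only near $0$), an approximation with $d$-independent numbers of terms and coefficients. The danger is that $\xi$ with many coordinates near $1/2$ or near other rationals creates exponentially many competing saddle points. My first attempt is to group coordinates by the nearest point of $\{0,1/2\}$ and to use the positivity/log-convexity of $\vartheta$ to show that the other rationals contribute a total mass $\lesssim\sum_{q\ge3}q^{2-d/2}$.
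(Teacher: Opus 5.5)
Your starting point is the same as the paper's: the Cauchy integral on $|z|=r$, with $r$ fixed by the saddle-point equation $d\mu(r)=n$. Your target decomposition in Step 2, using only the shifts $0$ and $\mathbf h$, is exactly Proposition~\ref{prop:sphere-Gaussian-approximation}. But Step 2 is the whole theorem, and your proposal only asserts it. The tools you name would not produce it.

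\textbf{Why your Step 2 tools fail.} The circle-method arc at $a/q$ gives $\sum_{\ell\in(\Z/q)^d}\prod_j q^{-1}G(a,\ell_j,q)\,g_t(\xi-\ell/q)$. That is $q^d$ terms whose coefficients have total mass exponentially large in $d$ (exactly $q^{d/2}$ for odd $q$). So your Step~3 bound by $\sum_k|c_k|\sup_t|G^{(\theta_k)}_tf|$ is unavailable. The arc at $0$ gives only the leading Gaussian. On the roughly $d^4$ scales $d\le n\le d^4$ one has $\kappa\asymp d$, so you need an expansion to order $\kappa^{-N}$, with $N$ large and $d$-independent coefficients. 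Nothing in your sketch supplies this. Your fallback for small $n$ also fails uniformly in $\xi$. Matching near $\xi=0$ forces $p\approx n/d$ in $\prod_j(1-p+p\cos 2\pi\xi_j)$, and this product equals $(1-2p)^d$ at $\mathbf h$, whereas $s_n^d(\mathbf h)=(-1)^n$. Moreover, for $n\asymp d$ a positive proportion of the coordinates of a typical point of $\mathcal S_n^d$ equal $\pm2$, so that regime is not near $\{0,\pm1\}^d$ at all.

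\textbf{The missing idea.} For $n\le d^4$ no arithmetic is needed. There $\rho=1-r\asymp d/(n+d)$ and $\kappa=\rho^2V\asymp\min\{n,d\}$. The rigidity of quadratic Weyl sums (Lemma~\ref{lem:euclidean-quadratic-phase}, Corollary~\ref{corollary: psi decay}) gives $|H_\theta(re(t))|/H_0(r)\le\exp(-cr\min\{1,\Delta_r(t,\theta)\})$; the only exceptional points are $(0,0)$ and $(1/2,1/2)$.
\begin{itemize}
\item Pass to $\eta\in\{\xi,\xi-\mathbf h\}$ with $\Lambda(\eta)\le d/2$. Then at least $d/3$ coordinates stay away from $1/2$.
\item The product over these coordinates makes the whole range $|t|\ge c\rho$ contribute $e^{-c\kappa}$. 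After dividing by $I_{d,n}\asymp V^{-1/2}$ this is still $O(\kappa^{7/2}e^{-c\kappa})$, precisely because $n\le d^4$.
\item Hence every $a/q$ with $q\ge3$ is negligible without any Gauss sums. The point $t=1/2$ is absorbed by the parity identity $H_{\theta+1/2}(z)=H_\theta(-z)$.
\item The local part is a Taylor expansion at $t=0$. The uniform derivative bounds for $\prod_jH_{\eta_j}(re(t))/H_0(re(t))$ come from Jacobi's triple product (Lemma~\ref{lemma: bound for quotient of H(z)}, Corollary~\ref{corllary:euclidean-product-derivatives}).
\item The resulting $r$-derivatives of $\widehat{g^d_r}$ are replaced by finite differences in $s=\log(r/(1-r))$ with step $\kappa^{-1/2}$. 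The decay $|b_k|\lesssim\kappa^{-k/2}$ keeps all coefficients bounded, and the remainder is $O_N(\min\{n,d\}^{-N})$.
\end{itemize}

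\textbf{The large-scale regime.} This approximation cannot cover all $n$. For $n\ge d$ the remainder bound is $O_N(d^{-N})$ and does not improve with $n$, so it is summable only over polynomially many $n$. You therefore also need the known large-scale $\ell^2$ estimate \eqref{eq:max sphere large scale l2} for $n>Cd^3$, which your proposal never invokes. That estimate is the source of the restriction $p\ge2$, and it is where $d\ge5$ is genuinely used. The Gaussian remainder does not cause it, since it is uniformly bounded on $\ell^1$.

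\textbf{The Gaussian maximal theorem.} It does not follow from Stein's semigroup theorem. Sampled Gaussians do not form a semigroup. Even coordinatewise subordination would only dominate $\mathcal G^d_*$ by a $d$-parameter maximal function, whose norm grows exponentially in $d$. You should cite Theorem~\ref{thm:gaussian} instead.
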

\noindent This theorem is a dimension-free strengthening of a result of A.~Magyar, E.~M.~Stein, and S.~Wainger \cite{MagyarSteinWainger} (see also Ionescu \cite{Ionescu} for an endpoint estimate). Since the averages over discrete balls in \eqref{eq:euclidean-averages} are convex combinations of the averages over discrete spheres in \eqref{eq:spherical-averages}, we have the pointwise inequality $\mathcal M_*^df(x)\leq \mathcal A_*^d|f|(x)$. Therefore, Theorem~\ref{thm:main:sphere} is stronger than Theorem~\ref{thm:main} for $p\geq 2$. 
The restriction \(p\geq2\) in Theorem~\ref{thm:main:sphere} comes from the large-scale estimate \eqref{eq:max sphere large scale l2}. Indeed, the presently available dimension-free estimate for the full spherical maximal function at large radii is an \(\ell^2\) estimate, and interpolation with the trivial \(\ell^\infty\) bound yields \eqref{eq:max sphere large scale} only for \(p\geq2\). Note that the Gaussian approximation we use to treat the range \(n\leq d^4\) does not itself impose the restriction $p\ge 2$. We do not address here whether the dimension-free estimate in Theorem \ref{thm:main:sphere} extends below \(p=2\).

\subsection{A brief history}

We now briefly describe existing results, focusing on those which are most relevant for our work. For a more detailed account we refer the reader to \cite[Section~1.2]{MirekSzarekWrobelGaussians} or \cite[Section~1.2]{NiksinskiWrobel}.

\subsection{Discrete Euclidean balls and Theorem~\ref{thm:main}}

First, we recall that there is no dimension-free discrete estimate uniform over all symmetric
convex bodies. Namely, in \cite[Theorem~2]{BMSWcubes}, J.~Bourgain, M.~Mirek, E.~M.~Stein, and the third author constructed a sequence
of ellipsoids for which the $\ell^p(\Z^d)$-operator norm, $1<p<\infty$, of the full discrete maximal function is
at least \(c_p(\log d)^{1/p}\), where \(c_p>0\) depends only on $p$. However, this example does
not rule out a dimension-free estimate for Euclidean balls.

By interpolating a trivial $\ell^{\infty}(\Z^d)$ bound with a weak-type $(1,1)$ estimate, one easily sees that \(\|\mathcal M_*^d\|_{\ell^p(\mathbb Z^d)\to\ell^p(\mathbb Z^d)}\) is finite for every $d\geq 1$ and $p\in(1,\infty)$. However, classical methods yield constants which grow exponentially with the dimension. The first results towards Stein's question \eqref{eq:SQ} were proved by J.~Bourgain, M.~Mirek, E.~M.~Stein, and the third author \cite{BMSWcubes,BMSWballs,BMSWperspective}. Namely, using a comparison principle from \cite[Theorem~1]{BMSWcubes}, which was sharpened in \cite[Theorem~2]{BMSWperspective}, they deduced that, for each $1<p<\infty$,
\begin{equation}
\label{eq:max ball large scale}
 \left\|\sup_{n>Cd^2}|\mathcal M_n^df|\right\|_{\ell^p(\mathbb Z^d)}\leq C_p\|f\|_{\ell^p(\mathbb Z^d)};
\end{equation}
here $C>0$ is a universal constant and $C_p$ depends only on $p$. This was achieved by comparing the discrete Hardy--Littlewood maximal operator with its continuous counterpart, for which Stein's estimate \cite[Theorem~13, p.~374]{Stein1982} applies. 

The remaining task was to treat the supremum over smaller scales $n<Cd^2$. However, it was not clear what the appropriate objects for comparison at these scales would be. This difficulty was partially overcome in \cite{BMSWballs} and \cite{NiksinskiWrobel}. Namely, in \cite[Theorem~1.1]{BMSWballs}, the authors proved a dyadic counterpart of \eqref{eq:SQ}. Recently, the second and third authors of the present paper \cite[Theorem~1.1]{NiksinskiWrobel} proved a weaker variant of \eqref{eq:SQ}, with the supremum restricted to small radii $n<d^{1-\varepsilon},$ where $0<\varepsilon<1$ is arbitrarily small. We note that in both \cite{NiksinskiWrobel} and \cite{BMSWballs} (at small scales $n\leq cd$), the analysis was essentially reduced to product settings modeled on the Hamming cube, such as $\{-1,0,1\}^d$ or, more generally, $\{-K,-K+1,\ldots,0,1,\ldots,K\}^d$, and the objects used for comparison were various combinatorial multipliers defined via Krawtchouk polynomials. Even more recently the second author in \cite{Niksinski} generalized these works (\cite{BMSWballs}, \cite{NiksinskiWrobel}) to the setting of discrete Hardy-Littlewood averaging operators over 1-symmetric convex bodies, i.e. those that are invariant under sign changes and permutations of coordinates. Due to these partial results he conjectured an analogue of Stein's question for 1-symmetric convex bodies. A resolution of this conjecture would give a discrete analogue of Bourgain's celebrated result \cite{Bourgain}.  It seems unlikely that the methods of our work, without significant new ideas, can be used towards attacking the aforementioned conjecture from \cite{Niksinski}. Nevertheless it is a direction that we plan to pursue in the future.

One of the main ideas of our paper is that the appropriate objects for comparison are in fact linear combinations, with uniformly bounded coefficients, of normalized discrete Gaussians. From this perspective the most important previous work is \cite{MirekSzarekWrobelGaussians}. There, Mirek, Szarek, and the third author proved dimension-free bounds for discrete maximal functions related to normalized discretely sampled Gaussians.
For \(|z|<1\), define
\[
 \Theta(z)=\sum_{k\in\mathbb Z}z^{k^2},
\]
and, for \(d\geq 1\) and \(0<r<1\), set
\[
 g_r^d(x)=\Theta(r)^{-d}r^{|x|^2},
 \qquad x\in\mathbb Z^d.
\]
Consider the following averaging operators and maximal functions:
\[
 \mathcal G_r^df=g_r^d*f,
 \qquad
 \mathcal G_*^df=\sup_{0<r<1}|\mathcal G_r^df|.
\]
Under the reparameterization $r=\exp(-\pi/t)$, the operators $\mathcal G_r^d$ and $\mathcal G_*^d$ coincide with $G_t$ and $G_*$ considered in \cite{MirekSzarekWrobelGaussians}. One of the main results of \cite{MirekSzarekWrobelGaussians} is thus a dimension-free $\ell^p(\Z^d)$ bound for $G_*$, which we restate for further reference.
\begin{theorem}[{After \cite[Theorem~1.1, (1.5)]{MirekSzarekWrobelGaussians}}]\label{thm:gaussian}
For every \(1<p\leq\infty\), there is a constant \(C_p<\infty\), independent
of \(d\), such that, for every \(d\geq 1\) and every
\(f\in\ell^p(\mathbb Z^d)\),
\[
 \|\mathcal G_*^df\|_{\ell^p(\mathbb Z^d)}
 \leq C_p\|f\|_{\ell^p(\mathbb Z^d)}.
\]
\end{theorem}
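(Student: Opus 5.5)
Throughout I write $r=e^{-\pi/t}$, so that $\mathcal G_r^d$ is the operator $G_t$ of \cite{MirekSzarekWrobelGaussians}. The plan is to compare $\mathcal G_r^d$ with a genuine symmetric diffusion semigroup on $\Z^d$, for which Stein's maximal theorem gives dimension-free bounds for every $1<p\le\infty$. The difference is then controlled by dimension-free square-function estimates. The key structural fact is that $g_r^d$ is a tensor product of one-dimensional symmetric probability kernels, so its multiplier factorises:
\[
 \widehat{g_r^d}(\xi)=\prod_{j=1}^d m_r(\xi_j),\qquad m_r(\eta)=\Theta(r)^{-1}\sum_{k\in\Z}r^{k^2}e^{2\pi i k\eta},\qquad \xi\in\T^d .
\]
By the Jacobi triple product, $m_r(\eta)$ is a positive, even function on $\T$ with maximum $1$ at $\eta=0$.

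Step 1 is to pick the comparison semigroup. I would take the discrete heat semigroup $P_s=e^{-s L}$, where $L$ is the multiplier $\sum_j \sin^2(\pi\xi_j)$, which is a tensor product of Markov semigroups. I then match parameters through $s=s(r)$ with $\log m_r(\eta)\approx -s\sin^2(\pi\eta)$ near $\eta=0$, say $s(r)=-m_r''(0)/(2\pi^2)$. Stein's theorem then gives $\|\sup_{s>0}|P_sf|\|_{\ell^p}\le C_p\|f\|_{\ell^p}$ uniformly in $d$.

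Step 2 is to establish one-dimensional comparison inequalities, uniformly in $r\in(0,1)$:
\[
 |m_r(\eta)|\le e^{-c\,s(r)\sin^2(\pi\eta)},\qquad |m_r(\eta)-e^{-s(r)\sin^2(\pi\eta)}|\le C\min\bigl(s\sin^2\pi\eta,\,(s\sin^2\pi\eta)^{-1}\bigr)^{\delta}.
\]
In the regime $r\to1$ (large $t$) I would obtain them from theta modularity, i.e.\ Poisson summation, which turns $m_r$ into a periodised continuous Gaussian. In the regime $r\to0$, $m_r(\eta)=(1+2r\cos2\pi\eta+O(r^4))/(1+2r+\dots)$ is an explicit trigonometric polynomial, and the estimates are elementary.

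Step 3 handles $p=2$. Tensorising Step 2 gives a bound on $|\widehat{g_r^d}(\xi)-e^{-sL(\xi)}|$ of the form $C\min(sL(\xi),(sL(\xi))^{-1})^{\delta'}$ with $\delta'$ independent of $d$. The maximal function of the difference over dyadic blocks $r\in[r_k,r_{k+1}]$ is then dominated by a square function together with a Rademacher–Menshov/Sobolev-in-$r$ argument. Summing the geometric decay yields a dimension-free $\ell^2$ bound.

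Step 4 extends to $1<p<\infty$. Here I would write $\mathcal G_r^d-P_{s(r)}$ as a composition with functions of $L$ (via the spectral theorem for the semigroup). I would then use dimension-free Littlewood–Paley–Stein square function bounds for $P_s$ on $\ell^p$ and interpolate the $\ell^2$ decay against crude $\ell^{p_0}$ bounds. The case $p=\infty$ is trivial.

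The main obstacle is Step 4 together with the tensorisation in Step 3. The error $\prod_j m_r(\xi_j)-\prod_j e^{-s\sin^2\pi\xi_j}$ must be controlled in terms of $L(\xi)$ alone, not coordinatewise, because coordinatewise errors would add up $d$ times. I would first try the telescoping identity $\prod a_j-\prod b_j=\sum_j(\prod_{i<j}a_i)(a_j-b_j)(\prod_{i>j}b_i)$. Combined with the domination $|a_i|,|b_i|\le e^{-cs\sin^2\pi\xi_i}$, this gives $\sum_j |a_j-b_j|\,e^{-csL(\xi)}\lesssim sL\,e^{-csL}$, provided the one-dimensional error is $O((s\sin^2\pi\eta)^{2})$ at small frequency. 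This quadratic error makes it summable relative to $sL$, and it requires that $s(r)$ be chosen so that the second-order terms match exactly.
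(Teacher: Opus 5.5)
The paper gives no proof of this statement; it imports it as a black box from \cite{MirekSzarekWrobelGaussians}, so your argument has to stand on its own. Steps 1--3 are sound in substance and, once completed, give $2\le p\le\infty$. Jacobi's product \eqref{eq:product for H quotient} makes Step 2 exact. It gives $m_r(\eta)=\prod_\ell(1-\omega_\ell(r)\sin^2(\pi\eta))$ with $\omega_\ell(r)\in(0,1)$, and your $s(r)=-m_r''(0)/(2\pi^2)=2\mu(r)$ equals $\sum_\ell\omega_\ell(r)$. Hence $0<m_r(\eta)\le e^{-s\sin^2(\pi\eta)}$ and, uniformly in $d$,
\[
 0\le e^{-sL(\xi)}-\widehat{g^d_r}(\xi)\lesssim\min\{1,(sL(\xi))^2\}\,e^{-sL(\xi)}.
\]
What is missing for the Sobolev-in-$s$ step is the companion bound $|s\partial_s\widehat{g^d_{r(s)}}(\xi)|\lesssim sL(\xi)\,e^{-c\,sL(\xi)}$. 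It can be read off from the proof of Corollary~\ref{corllary:euclidean-product-derivatives} at $t=0$, since for $m\ge1$ every Leibniz term carries a factor $u_j$. With it you get the $\ell^2$ bound. Interpolating the sublinear operator $\mathcal G^d_*$ with the trivial $\ell^\infty$ bound then gives $2\le p\le\infty$.

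The genuine gap is Step 4, i.e.\ the range $1<p<2$. This is exactly the range the paper needs, since the proof of Theorem~\ref{thm:main} reduces to $p\in(1,2]$.

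First, $\mathcal G^d_r-P_{s(r)}$ is not in the functional calculus of $L$. The multiplier $\widehat{g^d_r}(\xi)=\prod_jm_r(\xi_j)$ depends on the individual numbers $\sin^2(\pi\xi_j)$, not just on their sum, because $\log\widehat{g^d_r}(\xi)+sL(\xi)=\sum_j\sum_\ell\bigl[\log(1-\omega_\ell\sin^2(\pi\xi_j))+\omega_\ell\sin^2(\pi\xi_j)\bigr]$ is a non-linear function of each coordinate. So the spectral theorem for $L$ is not available. A pointwise multiplier bound in terms of $L(\xi)$, as in your last paragraph, is purely an $\ell^2$ statement.

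Second, you never identify the ``crude $\ell^{p_0}$ bounds'' to interpolate against, and the natural candidates fail. A maximal or square-function bound for $\{\mathcal G^d_r\}$ on $\ell^{p_0}$ is circular, since it is the theorem itself. The Sobolev-in-$s$ step needs $s\partial_s\mathcal G^d_{r(s)}$ on $\ell^{p_0}$, but its kernel is $\frac{\mu(r)}{r\mu'(r)}\,g^d_r(x)\bigl(|x|^2-d\mu(r)\bigr)$, whose $\ell^1$ norm grows like $\sqrt d$ for fixed $r$.

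Positivity of $\mathcal G^d_r$ plus Stein's dimension-free Littlewood--Paley theory for $P_s$ would let a Duoandikoetxea--Rubio de Francia bootstrap handle the lacunary supremum over $s(r)\in\{2^k\}$ for all $p>1$. Passing to the full supremum over $r\in(0,1)$ when $p<2$, however, needs an extra dimension-free $\ell^p$ input at intermediate scales, for example square-function control of $s\partial_s\mathcal G^d_{r(s)}$. Nothing in the proposal supplies it, so as it stands the proposal proves the theorem only for $2\le p\le\infty$.
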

\noindent Theorem~1.1 of \cite{MirekSzarekWrobelGaussians} also contains dimension-free
jump and variational inequalities with variation
exponent greater than two.

\subsection{Discrete spheres and Theorem~\ref{thm:main:sphere}}

In contrast with the discrete ball case, it is not at all clear a priori that $\mathcal A_*^d$ is bounded on $\ell^p(\Z^d)$, let alone whether it satisfies dimension-free estimates. Its boundedness properties were determined by Magyar, Stein, and Wainger \cite{MagyarSteinWainger}, who showed that, in dimensions $d\geq 5$, it is indeed bounded on $\ell^p(\Z^d)$ if and only if $p>d/(d-2)$. See also Magyar \cite{Magyar} for a previous result of this kind and Ionescu \cite{Ionescu} for an endpoint estimate. For $d=4$, due to the irregularity of the number of representations of a natural number as a sum of four squares, the corresponding maximal operator is unbounded for every $p<\infty$. If one restricts the supremum to odd $n$'s, then the $\ell^p$ boundedness of the corresponding maximal operator is one of the major open problems in this field.

It should be noted that the comparison principles from \cite[Theorem~1]{BMSWcubes} and \cite[Theorem~2]{BMSWperspective}
rely on the nestedness of the balls $\mathcal B_{n_1}^d\subseteq\mathcal B_{n_2}^d$, $0<n_1<n_2$, which has no analogue for the Euclidean spheres $\mathcal S_n^d$. However, a substitute for \eqref{eq:max ball large scale} on $\ell^2(\Z^d)$ is still available for the spheres. Namely, an observation of M.~Mirek, T.~Z.~Szarek, and the third author \cite[Remark~5.3, (5.19)]{MirekSzarekWrobelSpheres} implies that
there exists a universal constant $C>0$ such that
\begin{equation}
\label{eq:max sphere large scale l2}
 \left\|\sup_{n>Cd^3}|\mathcal A_n^df|\right\|_{\ell^2(\mathbb Z^d)}\leq C\|f\|_{\ell^2(\mathbb Z^d)}.
\end{equation}
To establish this bound, the authors invoked an estimate for the derivative of the continuous spherical multiplier, namely \cite[Remark~5.3, (5.20)]{MirekSzarekWrobelSpheres}, yet without including a proof. Inequality \cite[Remark~5.3, (5.20)]{MirekSzarekWrobelSpheres} was later justified in detail by Ciccone and the third author, see \cite[Proposition~5.3]{CicconeWrobel}.
By interpolating \eqref{eq:max sphere large scale l2} with the trivial $\ell^{\infty}(\Z^d)$ bound, we conclude that, for every $2\leq p\leq\infty$,
\begin{equation}
\label{eq:max sphere large scale}
 \left\|\sup_{n>Cd^3}|\mathcal A_n^df|\right\|_{\ell^p(\mathbb Z^d)}\leq C_p\|f\|_{\ell^p(\mathbb Z^d)};
\end{equation}
here $C>0$ is a universal constant and $C_p$ depends only on $p$.

\subsection{Our methods and the structure of the paper}

Let
\[
 m_n^d(\xi)
 =\frac1{|\mathcal B_n^d|}
 \sum_{x\in\mathcal B_n^d}e^{2\pi ix\cdot\xi},\qquad \xi\in\mathbb T^d,
\]
be the multiplier symbol corresponding to the average $\mathcal M_n^d$, where $\mathbb T^d=\mathbb R^d/\mathbb Z^d$, and set
\[
 \mathbf h=\left(\frac12,\ldots,\frac12\right)\in\mathbb T^d.
\]
The next proposition is the main new tool in our analysis for the ball case. It says that, for $1\leq n\leq d^4$,
each ball multiplier is, up to a remainder
\(O_{N}(\min\{n,d\}^{-N})\), a finite linear combination, with uniformly bounded coefficients, of normalized
discrete Gaussian multipliers and their translates by
\(\mathbf h\).
\begin{proposition}\label{prop:euclidean-Gaussian-approximation}
Fix an integer dimension $d\geq 1$ and an integer squared radius $1\leq n\leq d^4$. Then, for every integer \(N\geq 1\), there exist
\begin{enumerate}
\item an integer \(L_N\geq 1\) and a constant $C_N>0$ depending only on $N$;
\item complex coefficients
\(\alpha_{\nu}=\alpha_{\nu,d,n}\) and \(\beta_{\nu}=\beta_{\nu,d,n}\), for \(1\leq\nu\leq L_N\);
\item parameters
\(r_{\nu}=r_{\nu,d,n}\) and \(\widetilde r_{\nu}=\widetilde r_{\nu,d,n}\) in $(0,1)$, for \(1\leq\nu\leq L_N\),
\end{enumerate}
such that
\begin{equation}\label{eq:euclidean-Gaussian-coefficients}
 \sum_{\nu=1}^{L_N}
 \bigl(|\alpha_{\nu}|+|\beta_{\nu}|\bigr)
 \leq C_{N},
\end{equation}
and
\begin{equation}\label{eq:euclidean-multiplier-remainder}
 \sup_{\xi\in\mathbb T^d}\left|m_n^d(\xi)-\sum_{\nu=1}^{L_N}\alpha_{\nu}\widehat{g_{r_{\nu}}^d}(\xi)
 -\sum_{\nu=1}^{L_N}\beta_{\nu}
 \widehat{g_{\widetilde r_{\nu}}^d}(\xi-\mathbf h)\right|
 \leq C_{N}\min\{n,d\}^{-N}.
\end{equation}
\end{proposition}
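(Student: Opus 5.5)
The plan is to write $m_n^d$ as a Cauchy integral of a product generating function, evaluate it by a saddle-point analysis, and then convert the complex-radius contributions into combinations of real-parameter Gaussians via finite differences. For $|z|<1$ and $\eta\in\mathbb T$ set $\Theta(z,\eta)=\sum_{k\in\Z}z^{k^2}e^{2\pi ik\eta}$ and $F(z,\xi)=\prod_{j=1}^d\Theta(z,\xi_j)=\sum_{x\in\Z^d}z^{|x|^2}e^{2\pi ix\cdot\xi}$. Then $\widehat{g_r^d}(\xi)=F(r,\xi)/\Theta(r)^d$. Since $(-1)^{k^2}=(-1)^k$, we also have $\widehat{g_r^d}(\xi-\mathbf h)=F(-r,\xi)/\Theta(r)^d$. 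Summing the generating series over $|x|^2\le n$ gives
\[
 m_n^d(\xi)=\frac{1}{|\mathcal B_n^d|}\,\frac1{2\pi i}\oint_{|z|=r}\frac{F(z,\xi)}{(1-z)z^{n+1}}\dd z .
\]
I will choose $r=r_0\in(0,1)$ as the saddle point of $z\mapsto \Theta(z)^d(1-z)^{-1}z^{-n}$, essentially $d\,r\Theta'(r)/\Theta(r)\approx n$. With $z=r_0e^{i\theta}$, the plan is to split the circle into a major arc $|\theta|\le\delta$ around $z=r_0$, a major arc $|\theta-\pi|\le\delta$ around $z=-r_0$ (which produces the $\beta_\nu$ and $\mathbf h$-translates), and the remaining minor arcs. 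Here $\delta$ is a small power of the natural width $\sigma^{-1}$, where $\sigma^2\asymp d\cdot\mathrm{Var}_{r_0}(k^2)$ is the variance of $|x|^2$ under $g_{r_0}^d$.

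On the minor arcs I want a bound, uniform in $\xi$, of the form $|F(r_0e^{i\theta},\xi)|\le\Theta(r_0)^d e^{-c\,d\,\phi(r_0,\theta)}$, where $\phi>0$ away from $\theta\in\{0,\pi\}$. This should follow from the one-dimensional inequality $|\Theta(re^{i\theta},\eta)|\le\Theta(r)\,(1-c\,\phi(r,\theta))$ for every $\eta$, proved by comparing the weights $r^{k^2}$ for $k=0,\pm1,\pm2$. Dividing by $|\mathcal B_n^d|$, which a local limit theorem (Gaussian saddle-point asymptotics) shows is comparable to $\Theta(r_0)^d r_0^{-n}(1-r_0)^{-1}\sigma^{-1}$, this gives a contribution $O(\min\{n,d\}^{-N})$. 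The regime $n\lesssim d$, where $r_0\asymp n/d$ is small, has to be handled separately: there $\sigma^2\asymp n$, which is why the error is measured in $\min\{n,d\}$.

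On the major arc near $0$ I expand in $\theta$:
\[
 F(r_0e^{i\theta},\xi)=\sum_{k<K}\frac{(i\theta)^k}{k!}(r\partial_r)^kF(r,\xi)\big|_{r=r_0}+\text{Taylor remainder}.
\]
Each derivative $(r\partial_r)^kF(r_0,\xi)$ is replaced by a finite difference of $F(r_0e^{j\tau},\xi)$, $|j|\le K$, with a step $\tau\asymp\sigma^{-1}$. This produces exactly real-parameter Gaussians $\widehat{g^d_{r_\nu}}(\xi)$ with $r_\nu=r_0e^{j\tau}$, together with normalizing factors $\Theta(r_\nu)^d$. After integrating the explicit $\theta$-dependence against $z^{-n}(1-z)^{-1}$ and normalizing by $|\mathcal B_n^d|$, the coefficients $\alpha_\nu$ should be of the form (polynomial in $\tau\sigma$) $\times\,\Theta(r_\nu)^d r_0^{-n}/\bigl(\Theta(r_0)^d r_0^{-n}\bigr)$. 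These are bounded because $\log\Theta(r)^d-n\log r$ is stationary at $r_0$ with second derivative $\asymp\sigma^2$ in $\log r$. The arc near $\pi$ is handled identically with $-r_0$ and yields the $\beta_\nu$, $\widetilde r_\nu$. The total number of terms depends only on $N$.

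The main obstacle is making the Taylor and finite-difference errors small uniformly in $\xi$. Here one cannot expand $\log F$, since $F(\cdot,\xi)$ may vanish or oscillate for particular $\xi$. Instead I will bound the remainders by their $\ell^1(\Z^d)$ kernel norms, which majorize multiplier norms uniformly in $\xi$. The $K$-th difference in $\log r$ applied to $r^{|x|^2}$ is bounded by $(\tau\,|\,|x|^2-n|)^K r^{|x|^2}$ up to constants. Its normalized $\ell^1$ sum is then a $K$-th central moment of $|x|^2$ under $g_{r_0}^d$, of size $(\tau\sigma)^K$. Multiplied by the $\theta^K$ integrated over the arc, this yields $(\sigma^{-1}\cdot\sigma\cdot\delta\sigma)^{K}\cdot\sigma^{-\text{gain}}$. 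Choosing $\delta$ and $K=K(N)$ appropriately gives $O(\min\{n,d\}^{-N})$; the $1/(1-z)$ factor is smooth on the arcs when $r_0$ is away from $1$, which holds since $n\le d^4$ (I would check the edge case $r_0\to1$, where $n/d$ is large, separately using $1-r_0\gtrsim d/n\ge d^{-3}$). Verifying the moment bounds and the uniform minor-arc bound simultaneously in the small-$n$ regime is where I expect the real work to lie.
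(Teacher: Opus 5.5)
The gaps are in the local analysis and the minor-arc bound. Your outer architecture is sound. Splitting the circle into arcs around $z=\pm r_0$, with $H_\theta(-z)=H_{\theta+1/2}(z)$ producing the $\mathbf h$-translates, is a legitimate substitute for the paper's splitting of frequency space by $\Lambda(\xi)\le d/2$. Your radii $r_0e^{j\tau}$ with $\tau\asymp\sigma^{-1}\asymp V^{-1/2}$ are even at the same scale as the paper's $r(s+\ell\kappa^{-1/2})$.

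\textbf{Local analysis: no separation of scales.} You Taylor-expand and finite-difference the full $F(r_0e^{i\theta},\xi)$, or its tilt by $z^{-n}$. Its $k$-th derivative in $\theta$ or $\log r$ has relative size $\sigma^k$, so it varies on exactly the scale where the integral concentrates.

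Your $\ell^1$-kernel bound on the Taylor remainder is pointwise in $\theta$. It discards the oscillation that produces the Gaussian factor $e^{-c\sigma^2\theta^2}$. After integrating over $|\theta|\le\delta$ and normalizing, the relative error is $\asymp(\delta\sigma)^{K+1}$. The Gaussian tail off the arc is small only if $\delta\sigma\gtrsim\sqrt{\log\kappa}$, so this error is never small, and nothing in your scheme produces the factor $\sigma^{-\mathrm{gain}}$.

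The finite differences fail the same way. Differencing $F(r,\xi)r^{-n}$ with step $\tau\asymp\sigma^{-1}$ gives bounded coefficients but error $\asymp(\tau\sigma)^M\asymp1$. Differencing $F(r,\xi)$ itself gives coefficients $\Theta(r_\nu)^d/\Theta(r_0)^d\approx e^{nj\tau}$ with $n\tau\asymp\sqrt\kappa$. Your stationarity argument needs $r_\nu^{-n}$ there, not $r_0^{-n}$.

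\textbf{The missing idea} is to divide out the $\xi$-independent factor $H_0(re(t))^d$. The integrand becomes $\Phi_{d,n}(t)R_{\varepsilon,\eta}(t)$. Here $\Phi_{d,n}$ carries all the concentration on the scale $V^{-1/2}$, and
\[
R_{\varepsilon,\eta}(t)=\frac{1-re(t)}{1-\varepsilon re(t)}\prod_{j=1}^d\frac{H_{\eta_j}(re(t))}{H_0(re(t))}
\]
satisfies $\rho^m|\partial_t^mR_{\varepsilon,\eta}(t)|\lesssim_m1$ for $|t|\le c\rho$, uniformly in $\eta$ and $d$.

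This is where Jacobi's triple product enters. It gives $|H_\theta(z)/H_0(z)|\le\exp\bigl(-c\frac{|z|}{1-|z|}\sin^2(\pi\theta)\bigr)$ on a Stolz region. Cauchy estimates and a Leibniz-rule argument then give the bound $(1+U_r(\eta))^me^{-cU_r(\eta)}$.

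Only then is the expansion in powers of $\kappa^{-1/2}=(\rho\sqrt V)^{-1}$ meaningful. The moments $b_k$ of $\Phi_{d,n}$ are $O(\kappa^{-k/2})$. The derivatives of $R$ at $0$ are $s$-derivatives of the normalized $\widehat{g^d_{r(s)}}$, with $s=\log\frac r{1-r}$, and these are uniformly bounded. Finite differences in $s$ with step $\kappa^{-1/2}$ therefore have bounded coefficients and error $O(\kappa^{-N})$.

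\textbf{Minor arcs.} Your minor-arc bound is also insufficient. When $r$ is near $1$, comparing only the weights with $|k|\le2$ gives a per-coordinate gain of at most order $\Theta(r)^{-2}\asymp\rho$. That yields $|F|\le\Theta(r)^de^{-cd\rho}$ with $d\rho\asymp d^2/(n+d)$, which is $\lesssim1$ once $n\gtrsim d^2$.

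It also gives only $e^{-cd\rho t^2}$ instead of the needed $e^{-cVt^2}\approx e^{-c\,drt^2/\rho^2}$ in the range $V^{-1/2}\ll|t|\ll\rho$. You need that range as well for $|\mathcal B_n^d|\asymp\Theta(r)^dr^{-n}\rho^{-1}V^{-1/2}$.

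The paper handles this with Weyl differencing for quadratic exponential sums of length $\asymp\rho^{-1/2}$ (Lemma~\ref{lem:euclidean-quadratic-phase}). This yields $1-|\psi_r(t,\theta)|^2\gtrsim r\min\{1,\Delta_r(t,\theta)\}$. So the delicate regime is large $n/d$, not small $n$. For $r<1/2$, your few-term comparison is exactly what the paper uses.
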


For the spherical case, our main tool will be a variant of Proposition~\ref{prop:euclidean-Gaussian-approximation}. Here the corresponding spherical multiplier is given by
\[
 s_n^d(\xi)
 =\frac1{|\mathcal S_n^d|}
 \sum_{x\in\mathcal S_n^d}e^{2\pi ix\cdot\xi},\qquad \xi\in\mathbb T^d.
\]

\begin{proposition}\label{prop:sphere-Gaussian-approximation}
Fix a dimension $d\geq 5$ and a squared radius $1\leq n\leq d^4$. Then, for every integer \(N\geq 1\), there exist
\begin{enumerate}
\item an integer \(L_N\geq 1\) and a constant $C_N>0$ depending only on $N$;
\item complex coefficients
\(\alpha_{\nu}=\alpha_{\nu,d,n}\) and \(\beta_{\nu}=\beta_{\nu,d,n}\), for \(1\leq\nu\leq L_N\);
\item parameters
\(r_{\nu}=r_{\nu,d,n}\) and \(\widetilde r_{\nu}=\widetilde r_{\nu,d,n}\) in $(0,1)$, for \(1\leq\nu\leq L_N\),
\end{enumerate}
such that
\begin{equation}\label{eq:sphere-Gaussian-coefficients}
 \sum_{\nu=1}^{L_N}
 \bigl(|\alpha_{\nu}|+|\beta_{\nu}|\bigr)
 \leq C_{N},
\end{equation}
and
\begin{equation}\label{eq:sphere-multiplier-remainder}
 \sup_{\xi\in\mathbb T^d}\left|s_n^d(\xi)-\sum_{\nu=1}^{L_N}\alpha_{\nu}\widehat{g_{r_{\nu}}^d}(\xi)
 -\sum_{\nu=1}^{L_N}\beta_{\nu}
 \widehat{g_{\widetilde r_{\nu}}^d}(\xi-\mathbf h)\right|
 \leq C_{N}\min\{n,d\}^{-N}.
\end{equation}
\end{proposition}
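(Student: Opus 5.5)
The plan is to run the same saddle-point argument as for Proposition~\ref{prop:euclidean-Gaussian-approximation}, but now on the exact generating function of the spheres. Write $\Theta(z,\eta)=\sum_{k\in\Z}z^{k^2}e^{2\pi i k\eta}$ for $|z|<1$ and $\eta\in\T$. Then
\[
\sum_{n\geq0}|\mathcal S_n^d|\,s_n^d(\xi)\,z^n=\prod_{j=1}^d\Theta(z,\xi_j),
\qquad
\widehat{g_r^d}(\xi)=\prod_{j=1}^d\frac{\Theta(r,\xi_j)}{\Theta(r)}.
\]
Two identities drive everything:
\begin{itemize}
\item $\Theta(-r,\eta)=\Theta(r,\eta+\tfrac12)$, because $(-1)^{k^2}=(-1)^k$; this is where the translates by $\mathbf h$ come from.
\item $z\mapsto\prod_j\Theta(z,\xi_j)\Theta(z)^{-d}$ evaluated at real $z=r'\in(0,1)$ is exactly $\widehat{g^d_{r'}}(\xi)$.
\end{itemize}

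By Cauchy's formula on the circle $|z|=r$, with $r=r_{d,n}$ chosen as the saddle point $d\,r\Theta'(r)/\Theta(r)=n$, we get
\[
|\mathcal S_n^d|\,s_n^d(\xi)=\frac{r^{-n}}{2\pi}\int_{-\pi}^{\pi}e^{-in\theta}\prod_{j}\Theta(re^{i\theta},\xi_j)\,\dd\theta .
\]
The same formula with $\xi=0$ gives $|\mathcal S_n^d|$. I will split the $\theta$-integral into a major arc near $\theta=0$, a major arc near $\theta=\pi$, and minor arcs.

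On the arc near $0$, write $\theta=\phi/\sigma$, where $\sigma^2\asymp d\,\mathrm{Var}$ is the variance of $k^2$ under the weights $r^{k^2}/\Theta(r)$; one has $\sigma^2\gtrsim\min\{n,d\}$ up to powers. Expand $\log\prod_j\Theta(re^{i\theta},\xi_j)$ around $\theta=0$ in powers of $i\theta$. The coefficients are derivatives $(r\partial_r)^m$ of $\log\Theta(r,\xi_j)$. After integrating against the Gaussian $e^{-\phi^2/2}$ and truncating at order $N$, the main term becomes a finite combination $\sum_{m\le M_N}c_m\,(r\partial_r)^m\bigl[\Theta(r)^{d}\widehat{g_r^d}(\xi)\bigr]$. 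The coefficients $c_m$ carry the right powers of $\sigma$, so that after division by the denominator they are uniformly bounded. I then replace each derivative in $r$ by a finite-difference stencil of $\widehat{g_{r'}^d}(\xi)$ at points $r'=r\,e^{\ell h/\sigma}$, $|\ell|\le M_N$. The step size is taken on the natural scale $h/\sigma$, so the difference coefficients stay $O_N(1)$ after normalization. The error is $O(\sigma^{-N})$ times a sup of the same objects, and these are bounded since $|\widehat{g_{r'}^d}|\le1$. The ratios $\Theta(r')^d/\Theta(r)^d\cdot (r'/r)^{-n}$ that appear are $O(1)$ precisely because $r$ is the saddle point and $|r'-r|\lesssim r/\sigma$.

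The arc near $\pi$ is identical after the substitution $z\mapsto -z$ and the first identity above. It produces the $\beta_\nu\,\widehat{g^d_{\widetilde r_\nu}}(\xi-\mathbf h)$ terms. For $\xi=0$ this arc contributes $\Theta(-r)^d$, which is exponentially smaller than $\Theta(r)^d$ unless $r$ is small.

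Finally, divide by $|\mathcal S_n^d|$. Its own expansion is $r^{-n}\Theta(r)^d(2\pi\sigma^2)^{-1/2}\bigl(1+O(\sigma^{-1})\bigr)$ plus the $\pi$-arc term. The hypothesis $d\ge5$ (and $n\ge1$) guarantees that this quantity is bounded below by a constant multiple of its main term, so no cancellation occurs. This lets me keep the coefficient bound \eqref{eq:sphere-Gaussian-coefficients} and absorb the denominator's $O(\sigma^{-N})$ corrections by a geometric-series expansion.

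The main obstacle is the minor arcs, uniformly in $\xi$. There I need, for $\theta$ at distance $\ge\sigma^{-1+\epsilon}$ from both $0$ and $\pi$,
\[
\sup_{\eta\in\T}|\Theta(re^{i\theta},\eta)|\le\Theta(r)\,e^{-c\,\mathrm{dist}(\theta,\{0,\pi\})^2\,\mathrm{Var}} ,
\]
so that the product over $j$ gives a factor $e^{-c\sigma^{2\epsilon}}$ relative to the denominator. I would try to prove this by isolating the three weights $k=0,\pm1$ (and $k=\pm2$ when $r$ is not small). Equality $|\Theta(re^{i\theta},\eta)|=\Theta(r)$ would force $\theta k^2+2\pi k\eta$ to be constant mod $2\pi$ on these $k$. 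That pins down $\theta\in\{0,\pi\}$, and a quantitative version gives the quadratic loss.

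The delicate regimes are the two extremes. For small $n$ (so $r\approx n/2d$), $\mathrm{Var}$ is itself small, and I must track that the saving is $\min\{n,d\}$-dependent rather than $d$-dependent. For $n$ near $d^4$, $r$ is close to $1$, many $k$ contribute, and the bound must come from a Gauss-sum/Poisson estimate for $\Theta$ near the unit circle.
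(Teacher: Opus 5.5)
Your skeleton matches the paper's. You take the saddle point $d\mu(r)=n$ with $V=dr\mu'(r)$ (your $\sigma^2$), expand against the $\xi=0$ integrand $\Phi^{\mathrm{sph}}_{d,n}$, use finite differences on the $\log r$-scale $V^{-1/2}$, and reach the arc at $\pi$ through $\Theta(-z,\eta)=\Theta(z,\eta+\tfrac12)$. Keeping both major arcs for every $\xi$, instead of selecting a branch by $\Lambda(\xi)\le d/2$ as the paper does, is also fine once you have a minor-arc bound uniform in $\eta$.

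\textbf{Missing: uniform derivative bounds on the major arc.} A truncated expansion only gains something if the expanded object is the normalized quotient $Q_\xi(z)=\prod_j\Theta(z,\xi_j)/\Theta(z)$, whose value at real $z=r'$ is $\widehat{g^d_{r'}}(\xi)$, and if
\[
\rho^m|\partial_\theta^m Q_\xi(re^{i\theta})|\lesssim_m 1 \qquad \text{for } |\theta|\lesssim\rho,
\]
uniformly in $d$ and $\xi$. Then the $m$-th term is $O((V^{-1/2}/\rho)^m)=O(\kappa^{-m/2})$ with $\kappa=\rho^2V\asymp\min\{n,d\}$, and the same bounds at real points control the stencil remainder.

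Your justification (``bounded since $|\widehat{g^d_{r'}}|\le1$'') bounds the function, not its $(j+M)$-th derivative. It also says nothing at non-real $z$, where even $|Q_\xi(z)|\le1$ is not automatic.

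The trivial bound $|(r\partial_r)^m\widehat{g^d_r}(\xi)|\lesssim_m V^{m/2}$, obtained by putting absolute values inside $\mathbb E_r[e(X\cdot\xi)p_m(|X|^2)]$, makes every term of the expansion $O(1)$, so truncation yields nothing. The same obstruction hits differencing $\Theta(r)^d\widehat{g^d_r}(\xi)$, even recentred by $r^{-n}$ as your remark on the ratios $\Theta(r')^d(r'/r)^{-n}/\Theta(r)^d$ suggests. Its $\log r$-derivatives are genuinely of relative size $V^{m/2}$ already at $\xi=0$. Similarly, expanding $\log\prod_j\Theta(re^{i\theta},\xi_j)$ itself gives a $\xi$-dependent quadratic coefficient that can be negative, so there is no Gaussian.

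The required cancellation across coordinates is the paper's main new input:
\begin{itemize}
\item Jacobi's product $H_\theta/H_0=\prod_\ell(1-\omega_\ell(z)\sin^2(\pi\theta))$ gives, for $|\arg z|\le c_0(1-|z|)$, both $|H_\theta/H_0|\le\exp(-c\frac{|z|}{1-|z|}\sin^2(\pi\theta))$ and $|H_\theta/H_0-1|\lesssim\frac{|z|}{1-|z|}\sin^2(\pi\theta)$ (Lemma~\ref{lemma: bound for quotient of H(z)}).
\item Cauchy estimates on discs of radius $\asymp\rho$ and Leibniz' rule, with the bookkeeping $\sum_{|S|\le m}\prod_{j\in S}u_j\lesssim_m(1+U)^m$ where $u_j=\frac r\rho\sin^2(\pi\xi_j)$, then give the bound $(1+U_r(\xi))^me^{-cU_r(\xi)}$ of Corollary~\ref{corllary:euclidean-product-derivatives}.
\end{itemize}
This also shows that the relevant scale is $\rho$, not $\sigma^{-1}$. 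So the error is $O(\kappa^{-N})$, not your $O(\sigma^{-N})$.

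\textbf{Minor arcs: flagged but not closed.} As displayed, your bound is false for $r$ near $1$. At distance $\gg\rho$ from $\{0,\pi\}$ the per-coordinate saving is only of order one; for example $|\Theta(re^{2\pi i/3})|/\Theta(r)\to3^{-1/2}$. The exponent must therefore be capped, as in $\exp(-cr\min\{1,\Delta_r(t,\eta)\})$ from Corollary~\ref{corollary: psi decay}; this capped form still suffices since $dr\asymp\kappa$.

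Isolating $k\in\{0,\pm1,\pm2\}$ cannot prove it near $r=1$. Those terms carry total weight $\asymp\rho^{1/2}$, so after $d$ factors you save at most $e^{-cd\rho^{1/2}}$. Since $d\rho^{1/2}\asymp d^{3/2}n^{-1/2}\lesssim1$ once $n\gtrsim d^3$, this fails well inside $n\le d^4$.

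The paper instead compares $1-|\psi_r|^2$ with $1-|Q_L|^2$ for the normalized quadratic Weyl sum over $|k|\le L=\lceil\rho^{-1/2}\rceil$, where all weights are $\gtrsim L^{-1}$. It then proves the rigidity estimate of Lemma~\ref{lem:euclidean-quadratic-phase} by Weyl differencing, uniformly in $\eta$.

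Finally, the lower bound on the denominator comes from the saddle-point estimate $\int\Phi^{\mathrm{sph}}_{d,n}\asymp V^{-1/2}$ (the analogue of Lemma~\ref{lemma:euclidean-coefficient-estimate}), not from $d\ge5$.
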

Before moving further with the discussion of our methods we make some remarks on Propositions \ref{prop:euclidean-Gaussian-approximation} and \ref{prop:sphere-Gaussian-approximation}. Below for
$\theta \in \R$ and $|z|<1$ we define
\[
H_\theta(z)=\sum_{k\in\mathbb Z}z^{k^2}e^{2\pi i k\theta}.
\]
Note that then \(H_0=\Theta\). Properties of the functions $H_{\theta}$ will be crucial in our analysis. This is because $H_{0}$ appears in the generating-function representation of the number of lattice points in the ball $|B_n^d|,$ while $H_{\theta}$ appears in the generating-function representation of $|B_n^d|m_n^d(\xi).$

\begin{remark}
   The appearance of Gaussian multipliers translated by
$\mathbf h$
is natural for two related reasons. Firstly, the parity identity $k^2\equiv k \pmod 2$
implies
\[
H_{\theta+1/2}(z)=H_\theta(-z),
\]
which relates the contributions to the coefficient integral coming from
neighborhoods of $t=0$ and $t=1/2$ in the formula \eqref{eq: Formula for multiplier}.
Secondly, the original multipliers themselves may be large near $\mathbf h$, whereas Gaussian multiplier at $\mathbf h$ should oscillate and be small.
Indeed,
\[
s_n^d(\mathbf h)=(-1)^n,
\]
while
\[
m_n^d(\mathbf h)
=
\frac{\displaystyle\sum_{k=0}^n(-1)^k|\mathcal S_k^d|}
{\displaystyle\sum_{k=0}^n|\mathcal S_k^d|}.
\]
The latter quantity is close to $(-1)^n$ whenever the outer sphere
$\mathcal S_n^d$ dominates the ball $\mathcal B_n^d$, as happens, for
instance, for fixed $n$ and large $d$ (cf.\ \cite{NiksinskiWrobel}). Thus one should expect the
approximation to contain components localized both near $0$ and near
$\mathbf h$. 
\end{remark}
\begin{remark}
For every fixed $K>0$, both Propositions~\ref{prop:euclidean-Gaussian-approximation} and \ref{prop:sphere-Gaussian-approximation} remain true with $d^4$ replaced by $d^K$, with the constants allowed to depend on $K$. We focus on the case $K=4$ because it is enough for our purposes. Indeed, there are only finitely many dimensions for which $Cd^3>d^4$ and these dimensions are harmless, since the corresponding fixed-dimensional bounds can be absorbed into the constants. For all remaining dimensions, the two propositions, together with the large-scale estimates \eqref{eq:max ball large scale} and \eqref{eq:max sphere large scale}, cover all squared radii.
\end{remark}
\begin{remark}
The constants, coefficients, and parameters in Propositions~\ref{prop:euclidean-Gaussian-approximation} and \ref{prop:sphere-Gaussian-approximation} are not the same. What matters is that they satisfy \eqref{eq:euclidean-Gaussian-coefficients}, \eqref{eq:euclidean-multiplier-remainder}, \eqref{eq:sphere-Gaussian-coefficients}, and \eqref{eq:sphere-multiplier-remainder}.
\end{remark}

Propositions~\ref{prop:euclidean-Gaussian-approximation} and
\ref{prop:sphere-Gaussian-approximation}, together with
Theorem~\ref{thm:gaussian}, allow us to deduce
Theorems~\ref{thm:main} and \ref{thm:main:sphere}. Indeed, the Gaussian
parts of the approximating multipliers are controlled by
Theorem~\ref{thm:gaussian} (here the translates by $\mathbf h$ are handled
by modulation). The remainders are first controlled on $\ell^2(\Z^d)$
using Parseval's identity and then on $\ell^p(\Z^d)$ by interpolation
with trivial $\ell^1(\Z^d)$ or $\ell^\infty(\Z^d)$ bounds. Choosing the
order of approximation sufficiently large and summing over
$1\leq n\leq d^4$ gives the required maximal estimates. We refer to
Section~\ref{ssec: main from prop} for details.

We finish this section with a discussion of the proofs of
Propositions~\ref{prop:euclidean-Gaussian-approximation} and
\ref{prop:sphere-Gaussian-approximation}. Our aim here is to give the reader a high-level yet fairly complete picture of the main ideas in these proofs. Therefore some repetitions are unavoidable later in Sections \ref{sec: balls} and \ref{sec: spheres}. 

We discuss the ball case first. Our starting points are the aforementioned generating-function representations
\[
\sum_{n=0}^{\infty}|\mathcal B_n^d|z^n
=
\frac{\Theta(z)^d}{1-z},
\qquad
\sum_{n=0}^{\infty}
|\mathcal B_n^d|m_n^d(\xi)z^n
=
\frac{\prod_{j=1}^dH_{\xi_j}(z)}{1-z}.
\]
Thus, by Cauchy's formula, for every \(r\in(0,1)\),
\[
|\mathcal B_n^d|
=
\frac{1}{2\pi i}
\oint_{|z|=r}
\frac{\Theta(z)^d}{(1-z)z^{n+1}}\dd z,
\qquad
|\mathcal B_n^d|m_n^d(\xi)
=
\frac{1}{2\pi i}
\oint_{|z|=r}
\frac{\prod_{j=1}^dH_{\xi_j}(z)}
     {(1-z)z^{n+1}}\dd z.
\]

To set up the saddle-point method, we choose the radius
\(r=r_{d,n}\in(0,1)\) according to
\[
\left.
\frac{d}{dx}
\log\left(
\frac{\Theta(x)^d}{(1-x)x^n}
\right)
\right|_{x=r}
=0.
\]
Writing
\[
\rho=1-r,
\qquad
\mu(r)=\frac{r\Theta'(r)}{\Theta(r)},
\]
this condition becomes
\[
n=\frac{r}{\rho}+d\mu(r).
\]

Having fixed \(r\), we parameterize the contour by
\[
z=re^{2\pi it},
\qquad -\frac12\leq t\leq\frac12,
\]
and set
\[
\Phi_{d,n}(t)
=
e^{-2\pi int}
\frac{\rho}{1-re^{2\pi it}}
\left(
\frac{\Theta(re^{2\pi it})}{\Theta(r)}
\right)^d.
\]
Cauchy's formula then takes the form
\[
I_{d,n}
:=
\rho\Theta(r)^{-d}r^n|\mathcal B_n^d|
=
\int_{-1/2}^{1/2}\Phi_{d,n}(t)\dd t.
\]
By the choice of \(r\), the linear term in the Taylor expansion of
\(\log\Phi_{d,n}\) vanishes at $0$. More precisely, for
\(|t|\leq c\rho\) we 
\[
\log\Phi_{d,n}(t)
=
-2\pi^2Vt^2
+
O\left(\frac{V}{\rho}|t|^3\right),
\]
where
\[
V
=
\frac{r}{\rho^2}
+
dr\mu'(r)
\asymp
\frac{dr}{\rho^2}.
\]
Consequently,  the coefficient integral is concentrated on the scale
\(V^{-1/2}\), while Taylor's expansion is valid up to the larger scale
\(c\rho\).

The appearance of the latter scale is motivated by
Corollary~\ref{corollary: psi decay}. Indeed, it gives quantitative decay of
\[
\frac{H_\theta(re^{2\pi it})}{H_0(r)}
\]
in terms of the distance of \((t,\theta)\) from the two exceptional
points \((0,0)\) and \((1/2,1/2)\). These are the only points of
\(\R^2/\Z^2\) at which the quotient has modulus one. In particular,
when \(\theta=0\) and \(|t|\geq c\rho\), Corollary~\ref{corollary: psi decay} gives
\[
\frac{|H_0(re^{2\pi it})|}{H_0(r)}
\leq e^{-cr}
\]
and taking the \(d\)-th power, we obtain
\[
\left(
\frac{|H_0(re^{2\pi it})|}{H_0(r)}
\right)^d
\leq e^{-crd}
\leq e^{-c'\min\{n,d\}}.
\]
Thus the possibly weak decay of a single theta factor is amplified by
the product over the \(d\) coordinates.

The scale $\rho$ also has a complementary heuristic interpretation. Indeed,
\[
H_0(r)=\sum_{k\in\Z}r^{k^2}
\]
is essentially concentrated on \(|k|\lesssim\rho^{-1/2}\). Hence one
may think of \(H_0(re^{2\pi it})\) as a quadratic exponential sum over
this range, whose phase begins to oscillate when
\(|t|\gtrsim\rho\). In fact, the proof of
Corollary~\ref{corollary: psi decay} is based on a reduction to such
an exponential sum.

These two scales $\rho$ and $V^{-1/2}$ naturally lead to the parameter
\[
\kappa=\rho^2V\asymp\min\{n,d\}.
\]
In other words $\kappa$ is the square of the ratio between the scale \(\rho\), on which we
use Taylor's expansion, and the scale \(V^{-1/2}\), on which the
integral concentrates. Indeed, after the rescaling
\[
t=\frac{v}{\sqrt V},
\]
the local interval \(|t|\leq c\rho\) becomes
\[
|v|\leq c\sqrt\kappa,
\]
while the cubic error becomes
\[
O\left(\frac{|v|^3}{\sqrt\kappa}\right).
\]
Thus \(\kappa\) controls the accuracy of the saddle-point expansion.
More precisely, Lemma~\ref{lemma:decay of Phi}, whose proof uses
Corollary~\ref{corollary: psi decay}, gives Gaussian decay
\(e^{-cVt^2}\) on the local interval and exponential decay
\(e^{-c\kappa}\) on its complement. Combining these estimates with
the local Gaussian furnished by the saddle-point expansion approximation yields
\[
I_{d,n}\asymp V^{-1/2}.
\]

We remark in passing that the saddle-point analysis developed here to approximate $I_{d,n}$ has further applications to
lattice-point counting. In ongoing work with L.\ Daskalakis, we use it to
obtain sharp asymptotics, uniform in both the dimension and the radius, for
the numbers of lattice points in discrete Euclidean balls and spheres.

We now turn to the Cauchy integral representation of \(m_n^d(\xi)\).
The main additional feature is that two frequency regions have to be taken into
account, one near $0$ and the other near
\[
\mathbf h=\left(\frac12,\ldots,\frac12\right).
\]
To distinguish them, set
\[
\Lambda(\xi)
=
\sum_{j=1}^d\sin^2(\pi\xi_j).
\]
Since
\[
\Lambda(\xi-\mathbf h)
=
\sum_{j=1}^d\cos^2(\pi\xi_j)
=
d-\Lambda(\xi),
\]
for at least one of the arguments \(\xi\) or \(\xi-\mathbf h\) the corresponding value of
\(\Lambda\) does not exceed $d/2$. Therefore, for a given $\xi\in \T^d,$ we choose
\[
(\varepsilon,\eta)
=
\begin{cases}
(1,\xi),&\text{if }\Lambda(\xi)\leq d/2,\\
(-1,\xi-\mathbf h),&\text{if }\Lambda(\xi)>d/2.
\end{cases}
\]
Then \(\Lambda(\eta)\leq d/2\) and
\[
\xi=\eta+\frac{1-\varepsilon}{2}\mathbf h.
\]
Thus the parameter \(\varepsilon\) records whether we work directly with \(\xi\) or
first translate it by \(\mathbf h\).

The parity identity
\[
H_{\theta+1/2}(z)=H_\theta(-z)
\]
now allows us to treat the two cases simultaneously. Namely, for $\varepsilon\in\{-1,1\}$ define
\[
R_{\varepsilon,\eta}(t)
=
\frac{1-re^{2\pi it}}
     {1-\varepsilon re^{2\pi it}}
\prod_{j=1}^d
\frac{H_{\eta_j}(re^{2\pi it})}
     {H_0(re^{2\pi it})}.
\]
After a change of variables when \(\varepsilon=-1\), Cauchy's formula
gives
\[
\varepsilon^n m_n^d(\xi)
=
\frac{1}{I_{d,n}}
\int_{-1/2}^{1/2}
\Phi_{d,n}(t)R_{\varepsilon,\eta}(t)\dd t.
\]
In this way, the two frequency regions are reduced to the same
saddle-point problem.

With this representation in hand, we again split the integral at
\(|t|=c\rho\). On the local interval \(|t|\le c\rho\), we write the Taylor expansion of
\(R_{\varepsilon,\eta}\) at \(0\). To control the remainder uniformly
in \(d\), \(n\), and \(\eta\), it suffices to obtain uniform bounds for
the derivatives of
\[
\prod_{j=1}^d
\frac{H_{\eta_j}(re^{2\pi it})}
     {H_0(re^{2\pi it})}.
\]
The main new input is a bound for the quotient \(H_\theta(z)/H_0(z)\) when \(z\) is a
complex number close to the positive real axis. Using Jacobi's product
formula, we prove in Lemma~\ref{lemma: bound for quotient of H(z)}
that in the relevant region.
\[
\left|
\frac{H_\theta(z)}{H_0(z)}
\right|
\leq
\exp\left(
-c\frac{|z|}{1-|z|}
\sin^2(\pi\theta)
\right).
\]
Cauchy's estimates and the Leibniz formula
then imply the required derivative bounds, see
Corollary~\ref{corllary:euclidean-product-derivatives}.

On the complementary range \(|t|>c\rho\), we use
Corollary~\ref{corollary: psi decay} once more. Note that the condition
\(\Lambda(\eta)\leq d/2\) ensures that a fixed positive proportion of
the coordinates \(\eta_j\) stay uniformly away from the exceptional
frequency \(1/2\). Applying the corollary in these coordinates and
taking the product therefore gives
\[
|\Phi_{d,n}(t)R_{\varepsilon,\eta}(t)|
\leq e^{-c\kappa},
\qquad |t|>c\rho.
\]

Combining the local Taylor expansion with the tail estimate, we
obtain, for every fixed \(N\), an approximation of \(\varepsilon^n m_n^d(\xi)\) with error
\(O_N(\kappa^{-N})\). According to whether
\(\varepsilon=1\) or \(\varepsilon=-1\), the resulting terms are
derivatives at \(t=0\) of
\[
\prod_{j=1}^d
\frac{H_{\xi_j}(re^{2\pi it})}
     {H_0(re^{2\pi it})}
\]
or
\[
\frac{1-re^{2\pi it}}{1+re^{2\pi it}}
\prod_{j=1}^d
\frac{H_{\xi_j-1/2}(re^{2\pi it})}
     {H_0(re^{2\pi it})}.
\]
Corollary~\ref{corllary:euclidean-product-derivatives} also shows that
the terms associated with the other frequency region are
\(O_N(\kappa^{-N})\). We may therefore combine the two cases into a
single approximation. It is valid uniformly for all
\(\xi\in\mathbb T^d\) and involves derivatives associated with both
\(\xi\) and \(\xi-\mathbf h\).

It remains to convert these derivatives into genuine Gaussian
multipliers. Firstly, they can be expressed in terms of derivatives
with respect to \(r\) of normalized discrete Gaussian multipliers at
the frequencies \(\xi\) and \(\xi-\mathbf h\). Then after a convenient
reparameterization of the Gaussian family, we replace them by
sufficiently accurate high-order finite differences. Here the
discretization scale \(h\) in
\eqref{eq:euclidean-finite-difference} is chosen in terms of
\(\kappa\). This replacement follows from Taylor's theorem and
elementary linear algebra. At the same time, the decay of the
coefficients arising from the saddle-point expansion ensures that
the coefficients in the resulting finite linear combination remain
uniformly bounded.

Consequently, up to an error \(O_N(\kappa^{-N})\), the multiplier
\(m_n^d\) is a finite linear combination of genuine normalized
discrete Gaussian multipliers and their translates by \(\mathbf h\).
Since
\[
\kappa\asymp\min\{n,d\},
\]
this proves
Proposition~\ref{prop:euclidean-Gaussian-approximation}. The details
are given in Section~\ref{sec: balls}.

It is worth emphasizing that our use of the saddle-point method 
goes well beyond its earlier role in these problems. In \cite{MazoOdlyzko}, the method was
briefly suggested for lattice-point counting in the regime
\(n\asymp d\), while in \cite{NiksinskiWrobel} it was developed for counting
lattice points in balls and spheres only for small radii \(n\leq cd\). Neither work seems to have anticipated that the method
could yield uniform Gaussian approximations of the corresponding
multipliers throughout the range of \(n\le d^4\) and thus lead to the proofs of Theorems \ref{thm:main} and \ref{thm:main:sphere}.

The spherical case follows the same scheme. Here the generating
function is
\[
\sum_{n\geq0}|\mathcal S_n^d|s_n^d(\xi)z^n
=
\prod_{j=1}^dH_{\xi_j}(z),
\]
so the factor \((1-z)^{-1}\) is absent. Consequently, the saddle-point
equation becomes
\[
d\mu(r)=n,
\]
and the corresponding quadratic coefficient is
\[
V=dr\mu'(r).
\]
Nevertheless, the same estimates
\[
V\asymp\frac{dr}{\rho^2},
\qquad
\kappa=\rho^2V\asymp\min\{n,d\}
\]
remain valid. The two exceptional frequency regions are treated as in
the ball case, using the same parity identity. Moreover, the absence
of the factor \((1-z)^{-1}\) removes the corresponding rational factor
from \(R_{\varepsilon,\eta}\). The remainder of the saddle-point and
finite-difference argument follows the same scheme, with minor
simplifications. This gives
Proposition~\ref{prop:sphere-Gaussian-approximation}. More details on
the proof in the spherical case are sketched in Section~\ref{sec: spheres}.
\subsection{Notation}
\label{ssec:not}
\begin{enumerate}
\item For vectors $x,y\in\R^d$, we write
\[
  |x|=\Big(\sum_{j=1}^d x_j^2\Big)^{1/2},
  \qquad
  x\cdot y=\sum_{j=1}^d x_jy_j.
\]
\item For a finite set $A$, its cardinality is denoted by $|A|$.

\item We identify $\T^d=\R^d/\Z^d$ with $[-1/2,1/2)^d$ and use
the normalized Haar measure on $\T^d$. 
\item For $x\in\R$, we write
\[
  \|x\|_{\T}=\min_{k\in\Z}|x-k|.
\]
\item We set
\[
  e(s)=e^{2\pi i s},\qquad s\in\C.
\]

\item For $f\in\ell^1(\Z^d)$, its Fourier transform is
\[
  \widehat f(\xi)
  =\sum_{n\in\Z^d}f(n)e(n\cdot\xi),
  \qquad \xi\in\T^d.
\]
With this normalization, the Fourier transform extends to a unitary
map from $\ell^2(\Z^d)$ onto $L^2(\T^d)$.

\item Throughout the paper, $d$ denotes the dimension. All implicit
constants are independent of $d$ unless a dependence is explicitly
indicated. For nonnegative quantities $X,Y$, the notation
$X\lesssim Y$ means $X\le CY$ for an absolute constant $C$, while
$X\lesssim_{\alpha_1,\ldots,\alpha_k}Y$ allows $C$ to depend only on
the displayed parameters. We define $\asymp$ analogously and use the
same convention for $O(\cdot)$ notation. The values of absolute
constants $c,C>0$ may change from line to line.

\item We write
\[
  \mathbf h=\left(\frac12,\ldots,\frac12\right)\in\T^d.
\]
\item For $p\in[1,\infty]$, we abbreviate
\[
  \ell^p=\ell^p(\Z^d),\qquad
  \|f\|_p=\|f\|_{\ell^p(\Z^d)},
\]
and write $\|T\|_{p\to p}$ for the corresponding operator norm.
\end{enumerate}
\section*{Acknowledgments}
Jakub Niksiński was supported by the NSF CAREER grant DMS-2236493. Błażej Wróbel was supported by the National Science Centre, Poland, grant Sonata Bis 2022/46/E/ST1/00036

\section{Discrete Euclidean balls}
\label{sec: balls}

\subsection{Proof of Theorem \ref{thm:main}, assuming Proposition \ref{prop:euclidean-Gaussian-approximation}}
\label{ssec: main from prop}
\begin{proof}  
    The case $p= \infty$ is trivial, hence, by interpolation, it suffices to consider $p \in (1, 2]$. Since $\mathcal M_0^d=\Id$ and $d^4>Cd^2$ for all sufficiently
large $d$, by \eqref{eq:max ball large scale}, it is enough to prove
\begin{equation*}
%\label{eq:euclidean-bounded-radius-range}
 \left\|\sup_{1\leq n\leq d^4}|\mathcal M_n^d f|\right\|_{p}
 \lesssim_{p} \|f\|_{p}.
\end{equation*}
Consider the operator $J$, defined by the following formula
\begin{equation*}
    Jf(x)= e(x \cdot \mathbf{h}) f(x)= (-1)^{x_1+\ldots+x_d} f(x),
\end{equation*}
Note that then we have 

\begin{equation*}
    \widehat{Jf}(\xi)=\widehat{f}(\xi-\mathbf h), \qquad \widehat{J \mathcal G_r^dJf}(\xi)=\widehat{g}_r^d(\xi-\mathbf h)\widehat{f}(\xi),
\end{equation*}
moreover $J$ is an isometry on $\ell^p(\mathbb Z^d)$.
Fix $N$ sufficiently large in terms of $p$ and for every $1 \le n \le d^4$ consider the multiplier 
\begin{equation*}
%\label{eq:euclidean-Gaussian-approximation}
 \tilde{a}_{n,N}^d(\xi)
 :=\sum_{\nu=1}^{L_N}\alpha_{\nu,d,n}\widehat g_{r_{\nu,d,n}}^d(\xi)
 +\sum_{\nu=1}^{L_N}\beta_{\nu,d,n}
 \widehat g_{\widetilde r_{\nu,d,n}}^d(\xi-\mathbf h),
\end{equation*}
where the radii and coefficients are those from Proposition \ref{prop:euclidean-Gaussian-approximation}. Let $\mathcal{T}_{n,N}^d$ be a convolution operator corresponding to the multiplier $\tilde{a}_{n,N}^d$. Then Proposition \ref{prop:euclidean-Gaussian-approximation} implies the following pointwise bounds 
\[
\sup_{1\leq n\leq d^4}|\mathcal T_{n,N}^df|
 \lesssim_{N} 
  \sup_{0<r<1}|\mathcal G_r^df|
  +\sup_{0<r<1}|J\mathcal G_r^dJf|.
\]
The above inequality combined with
Theorem \ref{thm:gaussian} implies
\begin{equation} \label{lp bound of tilde A}
    \left\|\sup_{1\leq n\leq d^4}|\mathcal T_{n,N}^d f| \right\|_{p } \lesssim_{N,p} \|f\|_p.
\end{equation}
Moreover inequality \eqref{eq:euclidean-multiplier-remainder} combined with Parseval's theorem gives us 
\begin{equation}
\label{eq: l2 decay Parseval}
    \left\|\mathcal{M}_{n}^d-\mathcal T_{n,N}^d  \right\|_{2\to 2 } \lesssim_{N}  \min\{n,d\}^{-N}.
\end{equation}
Note also the trivial bound
\[
 \left\|\mathcal{M}_{n}^d-\mathcal T_{n,N}^d  \right\|_{1\to 1 } \lesssim_{N} 1,
\]
which follows from the $\ell^1$ contractivity of $\mathcal{M}_{n}^d$ and $ \mathcal G_r^d$ together with \eqref{eq:euclidean-Gaussian-coefficients}. Thus, if we choose $N=N(p)$ large enough, then \eqref{eq: l2 decay Parseval} interpolated with these trivial bound gives
\begin{equation} \label{lp bound for difference}
\left\|\mathcal{M}_{n}^d-\mathcal T_{n,N}^d \right\|_{p\to p } \lesssim_{N}  \min\{n,d\}^{-4}.
\end{equation}
Combining inequalities \eqref{lp bound of tilde A} and \eqref{lp bound for difference} we obtain
\begin{align*}
    \left\|\sup_{1\leq n\leq d^4}|\mathcal M_n^d f|\right\|_{p}
 &\le \left\|\sup_{1\leq n\leq d^4}|\mathcal T_{n,N}^d f| \right\|_{p} + \left\|\sup_{1\leq n\leq d^4}|(\mathcal M_{n}^d-\mathcal T_{n,N}^d)f| \right\|_{p} \\
 &\lesssim_{N,p}\|f\|_p+ \sum_{1 \le n \le d^4}\|(\mathcal M_{n}^d-\mathcal T_{n,N}^d)f\|_p \\
 &\lesssim_{N,p}\|f\|_p+ \sum_{1 \le n \le d^4} \min\{n,d \}^{-4}\|f\|_p \lesssim_{N,p} \|f\|_p.
\end{align*}
Since $N$ depends only on $p$, the above concludes the proof of Theorem \ref{thm:main}.

\end{proof}
\subsection{Definition of $\Theta$ and $H$ and generating function formula}
\begin{definition} \label{defn: defn of H}
    For $\theta \in \R$ and $|z|<1$ we define 
    \[
    H_\theta(z)= \sum_{k \in \Z} z^{k^2} e(k \theta),
    \]
    moreover for $t \in \R, r \in (0,1)$ we also define 
    \[
    \psi_{r}(t, \theta)= \frac{H_{\theta}(re(t))}{H_0(r)}.
    \]
\end{definition}
Note that $\Theta(z)=H_0(z)$. In the notation of Whittaker and Watson, we have
\(H_\theta(z)=\vartheta_3(\pi\theta,z)\), see
\cite[\S~21.11, pp.~463--464]{WhittakerWatson}. The usefulness of the functions $H_\theta(z)$, $\psi_r(t,\theta)$ comes from the observation that for every $\xi \in \T^d$ and $|z|<1$ we have 
\begin{align*}
    \sum_{n=0}^\infty |\mathcal{B}_n^{d}| m_n^d(\xi) z^n&= \sum_{n=0}^\infty \sum_{\substack{x \in \Z^d,\\|x|^2 \le n}}  z^n e(\xi \cdot x)\\ 
    &=  \sum_{\substack{x \in \Z^d}} e(\xi \cdot x)\sum_{n=|x|^2}^\infty z^n =\frac{\prod_{j=1}^d H_{\xi_j}(z)}{1-z}.
\end{align*}
Therefore by orthogonality (or Cauchy's formula and change of variables) for any $r \in (0,1)$ we have 
\begin{equation} \label{eq: Formula for multiplier}
   r^n |\mathcal{B}_n^{d}| m_n^d(\xi)= \int_\T \frac{\prod_{j=1}^d H_{\xi_j}(re(t))}{1-re(t)} e(-nt) \dd t,
\end{equation}
in particular 
\begin{equation*}
   r^n |\mathcal{B}_n^{d}| = \int_\T \frac{\Theta(re(t))^d}{1-re(t)} e(-nt) \dd t.
\end{equation*}
 We will apply the saddle-point method to the above formulas. This was briefly indicated at the end of \cite{MazoOdlyzko} for the case $n\asymp d$ and was used in \cite{NiksinskiWrobel} to compute $|\mathcal B_n^d|$ in the case $n \le cd$. 
\subsection{Saddle-point parameters and preliminary estimates}
\label{ssec: ball proof prelim}
For $r \in (0,1)$ put 
\[\rho=1-r\]
and define 
\[
\mu(r)= \frac{r \Theta'(r)}{\Theta(r)}.
\]
The saddle-point parameter $r=r_{d,n} \in (0,1)$ will be chosen to satisfy
\begin{equation} \label{eq: definition of r}
    n= \frac{r}{\rho}+d \mu(r).
\end{equation}
This choice is precisely the saddle-point condition. Namely, under the
parametrization \(z=re(t)\), it makes the linear term in the Taylor
expansion at \(t=0\) of the logarithm of the integrand in the preceding
formula for \(r^n|\mathcal B_n^d|\) vanish. Lemma below justifies the existence and uniqueness of $r_{d,n}$ and establishes preliminary estimates for quantities involving $\mu(r)$.

\begin{lemma} \label{lem: basic aproximation for mu}
    There is a unique $r_{d,n} \in (0,1)$ satisfying \eqref{eq: definition of r}. Moreover, for every $r \in (0,1)$ we have
    \[
    \Theta(r)\asymp \rho^{-1/2},\qquad
    \mu(r) \asymp \frac{r}{\rho}, \qquad r \mu'(r) \asymp \frac{r}{\rho^2}.
    \]
\end{lemma}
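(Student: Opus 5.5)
The plan is to establish the three asymptotic estimates first and then deduce existence and uniqueness of $r_{d,n}$ from monotonicity. Write $\Theta(r)=1+2\sum_{k\ge1}r^{k^2}$. For $\Theta(r)\asymp\rho^{-1/2}$ we compare the sum with the integral $\int_0^\infty r^{x^2}\dd x=\tfrac12\sqrt{\pi/\log(1/r)}$. Since $x\mapsto r^{x^2}$ is decreasing on $[0,\infty)$,
\[
\int_0^\infty r^{x^2}\dd x\le \sum_{k\ge0}r^{k^2}\le 1+\int_0^\infty r^{x^2}\dd x .
\]
We also have $\log(1/r)\asymp\rho$ for $r\ge 1/2$, while for $r\le1/2$ both $\Theta(r)$ and $\rho^{-1/2}$ are comparable to $1$. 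Together these give the claim for all $r\in(0,1)$.

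For $\mu$, note that $r\Theta'(r)=2\sum_{k\ge1}k^2r^{k^2}$. For $r$ near $1$ we use the same integral comparison. The function $x^2r^{x^2}$ is unimodal with maximum of size $\asymp \rho^{-1}$, so the sum equals the integral up to $O(\rho^{-1})$. The integral is
\[
\int_0^\infty x^2r^{x^2}\dd x\asymp \log(1/r)^{-3/2}\asymp\rho^{-3/2}.
\]
Dividing by $\Theta\asymp\rho^{-1/2}$ gives $\mu\asymp\rho^{-1}\asymp r/\rho$. For $r\le 1/2$ the sum is dominated by its $k=1$ term, so $r\Theta'(r)\asymp r$ and $\mu\asymp r\asymp r/\rho$. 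The factor $r$ in the statement matters exactly in this small-$r$ regime.

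For $r\mu'(r)$, observe that $r\mu'(r)=(r\,\tfrac{d}{dr})^2\log\Theta(r)$. This is the variance of $K^2$ under the probability law $P(K=k)\propto r^{k^2}$ on $\Z$, that is,
\[
r\mu'(r)=\frac{\sum k^4r^{k^2}}{\Theta}-\mu^2 .
\]
The upper bound is easy: the variance is at most the second moment $\sum k^4r^{k^2}/\Theta$, which is $\lesssim\rho^{-2}$ by the same integral comparison and $\lesssim r$ for small $r$.

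The lower bound is the main obstacle, because the difference of two quantities of size $\rho^{-2}$ could in principle cancel. For $r$ near $1$ we rescale $k=x\,\rho^{-1/2}$. The law of $\rho K^2$ then converges to that of $Y^2$, where $Y$ is a Gaussian of variance $1/2$, and $\operatorname{Var}(Y^2)>0$. To make this uniform and avoid a limiting argument, we lower-bound the variance directly: by Chebyshev-type reasoning, it is at least a constant times $\rho^{-2}$ times the probability mass in each of two separated windows, for instance $k^2\le \tfrac14\rho^{-1}$ and $k^2\in[\rho^{-1},2\rho^{-1}]$. Each window carries probability $\asymp1$ by counting lattice points against $\Theta\asymp\rho^{-1/2}$; this needs $\rho$ small so that the windows contain many integers. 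For $r$ bounded away from $1$ we use $P(K^2=0)\asymp1$ and $P(K^2=1)\asymp r$, which gives variance $\gtrsim r$. On an intermediate compact range of $r$ the variance is continuous and strictly positive, so the bound holds there as well.

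Finally, existence and uniqueness. The function $F(r)=r/\rho+d\mu(r)$ is continuous on $(0,1)$. It is strictly increasing, since $r/\rho$ is increasing and $\mu'>0$ by the variance lower bound. It tends to $0$ as $r\to0$ and to $\infty$ as $r\to1$. Hence for every $n\ge1$ there is exactly one $r\in(0,1)$ with $F(r)=n$.
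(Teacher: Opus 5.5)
Your proof is correct, but it takes a different route from the paper. The paper uses the identity $r\mu'(r)=\frac{1}{2\Theta(r)^2}\sum_{k,l\in\Z}r^{k^2+l^2}(k^2-l^2)^2$ only to get $\mu'>0$, and hence existence and uniqueness of $r_{d,n}$. For $r=e^{-\tau}$ close to $1$ it obtains all three comparabilities from Poisson summation (Jacobi's imaginary transformation). This gives $\Theta(e^{-\tau})=\sqrt{\pi/\tau}\,\bigl(1+2\sum_{n\ge1}e^{-\pi^2n^2/\tau}\bigr)$ and an explicit formula $\mu(e^{-\tau})=\frac{1}{2\tau}\bigl(1+O(\tau^{-1}e^{-\pi^2/\tau})\bigr)$. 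The paper then differentiates in $\tau$ to get $r\mu'(r)=-\frac{d}{d\tau}\mu(e^{-\tau})\asymp\tau^{-2}$, and dismisses small and intermediate $r$ via the power series.

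You replace Poisson summation by sum--integral comparison for unimodal functions. You handle the potential cancellation in $\mathbb{E}[K^4]-(\mathbb{E}[K^2])^2$ by using that same double-sum identity quantitatively: it is $\frac12\mathbb{E}(K^2-L^2)^2$ with $K,L$ independent. This gives $\operatorname{Var}(K^2)\ge\frac{9}{16}\rho^{-2}P(A)P(B)$ for your two windows, each of which has mass $\asymp 1$.

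What each approach buys:
\begin{itemize}
\item Yours is more elementary. It needs no modular transformation and no control of $\tau$-derivatives of the exponentially small correction terms, and it delivers exactly the two-sided bounds the lemma asserts.
\item The paper's gives genuine asymptotics with exponentially small relative errors, such as $\mu(e^{-\tau})\sim\frac{1}{2\tau}$ and $r\mu'(r)\sim\frac{1}{2\tau^2}$. That precision is not needed here, but it is what one would want for sharp lattice-point asymptotics.
\end{itemize}

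One small bookkeeping point: for $\mu$ you cover $r\le 1/2$ and $r$ near $1$, but not the intermediate range $[1/2,1-\delta]$. There the claim is immediate from $2r\le r\Theta'(r)\lesssim_\delta 1$ and $\Theta\asymp_\delta 1$. Your bound $P(K^2=0)P(K^2=1)=2r/\Theta^2$ already covers the variance on that range.
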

\begin{proof}
For the first part note that we have 
\begin{align*}
    r \mu'(r)&= \frac{\sum_{k \in \Z} k^4 r^{k^2}}{\Theta(r)}- \frac{\left(\sum_{k \in \Z} k^2 r^{k^2}\right)^2}{\Theta(r)^2} \\
    &= \frac{1}{2 \Theta(r)^2}\sum_{k,l \in \Z} r^{k^2+l^2}(k^2-l^2)^2>0
\end{align*}
and the above implies that 
\[
r \frac{d}{dr} \Big(\frac{r}{\rho}+d \mu(r) \Big)= \frac{r}{\rho^2}+d r \mu'(r)>0.
\]
Since $\lim_{r \to 0^+} \frac{r}{\rho}+ d \mu(r)=0$ and $\lim_{r \to 1^-} \frac{r}{\rho}+ d \mu(r)=\infty$, we conclude that there exists unique $r=r_{d,n}$ satisfying \eqref{eq: definition of r}.

For the second part, if $r \in (0,1/2)$, then the conclusion follows from the power series expansion of $\Theta(r)$. Assume now $r \in [1/2,1)$ and denote $r=e^{- \tau}$ so that
\[
 \tau= -\log r \asymp \rho.
\]
Applying Poisson summation formula we obtain

\[
 \Theta(e^{-\tau})
 =\sqrt{\frac{\pi}{\tau}}
 \left(1+2\sum_{n=1}^{\infty}e^{-\pi^2n^2/\tau}\right)
 \asymp \tau^{-1/2}\asymp\rho^{-1/2}
\]
and moreover,
\begin{align*}
\mu(e^{-\tau})&= \frac{\frac{\sqrt{\pi}}{2\tau^{3/2}}
+
\frac{\sqrt{\pi}}{\tau^{3/2}}
\sum_{n=1}^{\infty}
\left(
1-\frac{2\pi^2 n^2}{\tau}
\right)
e^{-\pi^2 n^2/\tau}}{\sqrt{\frac{\pi}{\tau}}
\left(
1+2\sum_{n=1}^{\infty} e^{-\pi^2 n^2/\tau}
\right)}\\
&= \frac{1}{2\tau} \cdot \frac{1+2\sum_{n=1}^{\infty}
\left(
1-\frac{2\pi^2 n^2}{\tau}
\right)
e^{-\pi^2 n^2/\tau}}{1+2\sum_{n=1}^{\infty} e^{-\pi^2 n^2/\tau}}.
\end{align*}
The above implies that for $\tau \le \delta$, where $\delta>0$ is a sufficiently small absolute constant, we have 
\[
\mu(r)= \mu(e^{-\tau}) \asymp \frac{1}{\tau} \asymp \frac{1}{\rho} \asymp \frac{r}{\rho}
\]
and
\[
r\mu'(r)= -re^{\tau}\frac{d}{d \tau}[\mu(e^{-\tau})] \asymp \frac{r}{\tau^2}  \asymp \frac{r}{\rho^2}.
\]
On the other hand if $\tau \ge \delta$, that is $ r \in [1/2, e^{-\delta}]$, the conclusion trivially holds, again by invoking the power series expansion of $\Theta(r)$.
\end{proof}
For the remainder of this subsection, let $r=r_{d,n}$ denote the unique solution of \eqref{eq: definition of r}. Let us introduce two parameters, which will be useful throughout the rest of the proof of Proposition \ref{prop:euclidean-Gaussian-approximation}. Define
 \begin{equation}
 \label{eq: def V kappa}
     V:=r \frac{d}{dr} \left(\frac{r}{\rho} + d \mu(r)\right)=\frac{r}{\rho^2}+ d r \mu'(r), \qquad  \kappa:=\rho^2V.
 \end{equation}
Lemma \ref{lem: basic aproximation for mu} implies that
 \[
n \asymp \frac{dr}{\rho}, \qquad V \asymp \frac{dr}{\rho^2},
 \]
 which after recalling $\rho=1-r$ yields
 \begin{equation} \label{eq: kappa, r , rho properties}
     r \asymp \frac{n}{n+d}, \qquad \rho \asymp \frac{d}{n+d}, \qquad \kappa=\rho^2 V \asymp \min\{n,d \}.
 \end{equation}
 In particular $\kappa \gtrsim 1$ and since $n \le d^4$ we get 
 \begin{equation} \label{eq: bound on V in terms of power of kappa}
     \rho^{-1} \asymp 1+\frac{n}{d} \lesssim \kappa^3, \qquad V= \frac{\kappa}{\rho^2} \lesssim \kappa^{7}.
 \end{equation}

Define
\begin{equation}\label{eq:euclidean-Phi}
 \Phi_{d,n}(t)
 =e(-nt)\frac{\rho}{1-re(t)}
 \left(\frac{H_0(re(t))}{H_0(r)}\right)^d,
\end{equation}
then by orthogonality we have
\[
\int_{-1/2}^{1/2} \Phi_{d,n}(t) \dd t=\rho \Theta(r)^{-d} r^n |\mathcal{B}_n^d|.
\]
As will be seen from the saddle-point expansion below, $V^{-1/2}$ is the natural scale of the main contribution to the coefficient integral, while $\rho$ is the scale on which the local expansion is valid. Thus $\kappa=\rho^2V$ measures the separation between these two scales and will serve as the main large parameter in the argument.

\subsection{Bounds for theta-function quotients at real arguments}
For every integer $L\geq2$ and all $t,\theta\in\mathbb \R$, put
\[
 Q_L(t,\theta)=\frac1{2L+1}\sum_{k=-L}^{L}e\big(tk^2+\theta k\big).
\]
We always have the trivial estimate $|Q_L(t,\theta)| \le 1$.
Moreover note that $|Q_L(t,\theta)|=1$ only for $(t,\theta) \in \Z^2 \cup \Big( (1/2,1/2)+ \Z^2 \Big)$. The point of the lemma below is to quantify the distance between $|Q_{L}(t,\theta)|$ and $1$ based on the distance of $(t,\theta)$ from $(0,0)$ and $(1/2,1/2)$ on $\T^2$. 
\begin{lemma}\label{lem:euclidean-quadratic-phase}
There is an absolute constant $c>0$ such that, for every integer $L\geq2$ and every
$t,\theta\in\R$ we have
\begin{equation}\label{eq:euclidean-finite-rigidity}
 1-|Q_L(t,\theta)|^2
 \geq c\min\left\{1,
 \min_{\varepsilon\in\{0,1\}}
 \left(
 L^4\|t-\varepsilon/2\|_{\T}^2
 +L^2\|\theta-\varepsilon/2\|_{\T}^2
 \right)\right\}.
\end{equation}
\end{lemma}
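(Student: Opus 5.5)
Write $a_k=e(tk^2+\theta k)$ for $|k|\le L$ and $M=2L+1$. The plan rests on the variance identity
\[
1-|Q_L|^2=\frac{1}{2M^2}\sum_{j,k=-L}^{L}|a_j-a_k|^2 .
\]
So it suffices to show that a fixed positive proportion of the pairs $(j,k)$ have $|a_j-a_k|\gtrsim \min\{1,\sqrt{X}\}$, where
\[
X=\min_{\varepsilon\in\{0,1\}}\bigl(L^4\|t-\varepsilon/2\|_{\T}^2+L^2\|\theta-\varepsilon/2\|_{\T}^2\bigr).
\]
Since $|a_j-a_k|\asymp\|t(j^2-k^2)+\theta(j-k)\|_{\T}$, the task reduces to lower bounds for the distance to $\Z$ of a quadratic phase $P(k)=tk^2+\theta k$.

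Rather than work with all pairs, I would use the second difference. Since $|a_{k+1}|=1$,
\[
|a_{k+1}-a_k|^2+|a_{k}-a_{k-1}|^2\gtrsim |a_{k+1}\bar a_k-a_k\bar a_{k-1}|^2=|e(2t)-1|^2 .
\]
Summing over $k$ gives $1-|Q_L|^2\gtrsim \|2t\|_{\T}^2$, with pairs at distance $1$ counted at a loss of a factor $M^{-1}$. That is too weak, so I would instead use differences at scale $h$: for $1\le h\le L/2$ and $k$ in a range of length $\asymp L$, the pairs $(k+h,k)$ and $(k,k-h)$ give
\[
1-|Q_L|^2\gtrsim \frac1{M^2}\cdot L\cdot\|2th^2\|_{\T}^2 ,
\]
which is also lossy. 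The cleaner route is a case split in which the exceptional configurations are essentially $t\approx 0$ or $t\approx 1/2$.

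\emph{Case A: $t$ is near $\varepsilon/2$, i.e.\ $\|t-\varepsilon/2\|_{\T}\le \delta L^{-2}$ for a small absolute $\delta$ and some $\varepsilon\in\{0,1\}$.} Using $k^2\equiv k\pmod 2$, write $e(\varepsilon k^2/2)=e(\varepsilon k/2)$. Then $a_k=e(t'k^2+\theta' k)$ with $|t'|\le\delta L^{-2}$ and $\theta'=\theta-\varepsilon/2$. The phase $t'k^2$ is now a small, slowly varying perturbation, of size at most $\delta$. If $\|\theta'\|_{\T}\ge CL^{-1}$, the linear sum $\frac1M\sum_k e(\theta' k)$ is bounded by a Dirichlet kernel estimate $\le 1/(M\|\theta'\|_{\T})$, and the perturbation adds at most $O(\delta)$. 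If $\|\theta'\|_{\T}\le CL^{-1}$, all phases are small, so I would Taylor expand. In that regime $1-|Q_L|^2\asymp$ the variance of $P(k)=t'k^2+\theta'k$ over uniform $k\in[-L,L]$. Since $k$ and $k^2$ are orthogonal and have variances $\asymp L^2$ and $\asymp L^4$, this variance is $\asymp L^4t'^2+L^2\theta'^2$, which is exactly the claimed quantity.

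\emph{Case B: $t$ is far from both $0$ and $1/2$, i.e.\ $\|t-\varepsilon/2\|_{\T}\ge\delta L^{-2}$ for both $\varepsilon$.} Here I need $1-|Q_L|\ge c$. I would apply Weyl differencing:
\[
|Q_L|^2\le \frac{1}{M}\sum_{|h|\le 2L}\Bigl|\frac1M\sum_k e(2thk)\Bigr|\lesssim \frac1M\sum_{|h|\le 2L}\min\Bigl\{1,\frac{1}{M\|2th\|_{\T}}\Bigr\}.
\]
The crude Weyl bound only gives $|Q_L|\le 1-c$ when $t$ is not close to a rational with small denominator, so the real goal is the statement $|Q_L|\le 1-c$ with a uniform constant $c$. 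I would argue by compactness and contradiction: if $|Q_L|$ is near $1$, the phases $a_k$ are nearly constant in $\ell^2$ average. Then the second differences $a_{k+1}\bar a_k\overline{(a_k\bar a_{k-1})}=e(2t)$ must be near $1$ on average, forcing $\|2t\|_{\T}$ small. The same reasoning at step $h$ (pairs $k\pm h$) gives $\|2th^2\|_{\T}$ small for all $h\le cL$, and that forces $\|2t\|_{\T}\lesssim L^{-2}$, i.e.\ $t$ within $O(L^{-2})$ of $0$ or $1/2$. That contradicts Case B once the constants are tuned.

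The main obstacle is making this near-constancy quantitative: converting "$1-|Q_L|^2$ small" into "$\|2th^2\|_{\T}$ small for many $h$" via the variance identity with pairs at distance $h$, and then the Diophantine step that $\|sh^2\|_{\T}\le\eta$ for all $h\le cL$ implies $\|s\|_{\T}\lesssim \eta L^{-2}$. I would prove the latter by induction on dyadic $h$: $\|s\|$ small implies $\|4s\|$ is controlled, and doubling $h$ loses only a factor $4$ as long as the quantities stay below $1/8$.
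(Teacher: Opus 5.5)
Your route differs from the paper's and can be made to work, but two steps do not close as written.

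\emph{Comparison.} The paper makes no case split on $t$. With $\delta=1-|Q_L|^2\le\delta_0$, it applies Weyl differencing on the Fourier side, $|Q_L|^2=N^{-2}\sum_h e(th^2+\theta h)\sum_l e(2thl)$, together with the Dirichlet bound \eqref{eq:euclidean-linear-phase}. This gives $\|2th\|_{\T}\lesssim\sqrt\delta/L$ for most $h\in[L/2,L]$. Two consecutive such $h$ give $\|2t\|_{\T}\lesssim\sqrt\delta/L$, and a single $h_0\asymp L$ upgrades this to \eqref{eq: bound on t}. The $\theta$-part then follows by removing the quadratic phase with the triangle inequality and comparing with $1-|D_N(\beta)|^2$.

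You instead confine the sharp quadratic dependence to the regime $t\approx\varepsilon/2$. After the parity reduction it becomes an explicit variance computation: orthogonality of $k$ and $k^2$ explains the weights $L^4$ and $L^2$ and gives the $t$- and $\theta$-parts at once. Elsewhere you only need a qualitative bound $|Q_L|\le 1-c$, via second differences $e(2th^2)$ and a dyadic bootstrap. That is a reasonable trade.

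\emph{First repair (Case A).} Your two sub-cases do not meet.
The crude bound $|D_M(\theta')|\le 1/(M\|\theta'\|_{\T})$ gives a uniform gap only for $C\gtrsim 1$. But then, for $\|\theta'\|_{\T}\le C/L$, the phases $\theta'k$ have size up to $C$. So the comparison $1-\cos(2\pi x)\asymp x^2$, which underlies ``$1-|Q_L|^2\asymp$ variance'', is not available.
The fix is to split at $c_1/L$ with $c_1$ small. For $\|\theta'\|_{\T}\ge c_1/L$, use $1-|D_M(a)|^2\ge c\min\{1,M^2\|a\|_{\T}^2\}$ including the middle range $M\|a\|_{\T}\in(c_1,1)$; the paper gets this from the monotonicity of $\sin(\pi x)/x$. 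Then absorb the $O(\delta)$ perturbation by taking $\delta$ small relative to $c_1$.

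\emph{Second repair (Case B).} You propose to quantify near-constancy by averaging $\frac1{2M^2}\sum_{j,k}|a_j-a_k|^2$ over pairs at distance $h$. That only makes $\|2th^2\|_{\T}$ small for most $h$, whereas your dyadic induction needs every power of two up to $cL$. With only ``most $h$'' you would have to exclude, e.g., $t\approx 1/4$ by a separate argument.
Go through a single reference phase instead. Write $\delta'=1-|Q_L|^2$ and $\omega=Q_L/|Q_L|$. Then $\frac1M\sum_k|a_k-\omega|^2=2(1-|Q_L|)\le 2\delta'$, so by Chebyshev at most $2M/25$ indices have $|a_k-\omega|>5\sqrt{\delta'}$. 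Hence for every $1\le h\le L/2$ some $|k|\le L-h$ has $k$ and $k\pm h$ all good, and $|e(2th^2)-1|\le|a_{k+h}-a_k|+|a_k-a_{k-h}|$ gives $\|2th^2\|_{\T}\le 5\sqrt{\delta'}$ for all such $h$.
Your bootstrap then yields $\|2t\|_{\T}\lesssim\sqrt{\delta'}L^{-2}$, which is exactly \eqref{eq: bound on t}. This contradicts the Case B hypothesis once $\delta'$ is small, and no compactness is needed.
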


\begin{proof}
First we will prove that for every integer $M\geq2$ and every $a\in\mathbb \R$, one has
\begin{equation}\label{eq:euclidean-linear-phase}
 1-\left|\frac1M\sum_{j=0}^{M-1}e(ja)\right|^2
 \geq c\min\{1,M^2\|a\|_{\T}^2\}.
\end{equation}
We may assume that $a \in (-1/2, 1/2]$. Let $\varepsilon_0>0$ be a fixed small constant, to be determined later and let 
\begin{equation*}
    D_M(a)= \frac{1}{M} \sum_{j=0}^{M-1} e(ja)
\end{equation*}
denote the normalized Dirichlet kernel. 

In the case $M|a| \le \varepsilon_0$ we use the formula for Dirichlet's kernel and Taylor's theorem to get 
\begin{align*}
1-\left|D_M(a)\right|^2
&=1-\frac{\sin^2(\pi Ma)}{M^2\sin^2(\pi a)}=1-
\frac{\Big(\pi Ma-\frac{\pi^3M^3a^3}{6}
+O(M^5|a|^5)\Big)^2}
{M^2\Big(\pi a-\frac{\pi^3a^3}{6}
+O(|a|^5)\Big)^2}\\
&=1-
\frac{\Big(1-\frac{\pi^2M^2a^2}{6}
+O(M^4|a|^4)\Big)^2}
{\Big(1-\frac{\pi^2a^2}{6}
+O(|a|^4)\Big)^2}=1-
\frac{1-\frac{\pi^2M^2a^2}{3}
+O(M^4|a|^4)}
{1-\frac{\pi^2a^2}{3}
+O(|a|^4)}\\
&=\frac{\pi^2(M^2-1)a^2}{3}
+O(M^4a^4)=\frac{\pi^2M^2|a|^2}{3}
\left(1-\frac{1}{M^2}+O(\varepsilon_0^2)\right).
\end{align*}
Now, as long as $\varepsilon_0$ is small enough, the last expression is bounded from below by $cM^2|a|^2$ for some absolute constant $c>0$.

If $M|a| \in (\varepsilon_0, 1)$, then we use the fact that the function 
\[
 [0,1] \ni x \mapsto \frac{\sin(\pi x)}{x}
\]
is decreasing. This way, since $M \ge 2$, one obtains
\begin{align*}
    \left|D_M(a)\right|&= \left| \frac{\sin(\pi Ma)}{M \sin(\pi a)}  \right|= \frac{\sin(\pi M|a|)/(M|a|)}{ \sin(\pi|a|)/(|a|)} 
    \\
    &\le \frac{\sin(\pi M|a|)/(M|a|)}{ \sin(\pi M |a|/2)/(M|a|/2)}  = \cos(\pi M|a|/2) \le \cos(\pi\varepsilon_0/2) \le 1-c_0
\end{align*}
for some constant $c_0>0$. Lastly, if $M|a| \ge 1$, then using the inequality $\sin (\pi x) \ge 2x$, which holds for $x \in [0, 1/2]$, we get
\begin{align*}
    \left|D_M(a)\right|&= \left| \frac{\sin(\pi Ma)}{M \sin(\pi a)}  \right|= \frac{|\sin(\pi Ma)|}{ M\sin(\pi|a|)} \le  \frac{1}{2M |a|} \le \frac{1}{2}.
\end{align*}
In conclusion, after combining all the cases \eqref{eq:euclidean-linear-phase} is proved.
\par
Now we turn to the proof of \eqref{eq:euclidean-finite-rigidity}.
Let $\delta_0>0$ be a small constant to be determined later and let
\begin{equation*}
    \delta= 1- \left|Q_{L}(t, \theta) \right|^2, \qquad N=2L+1.
\end{equation*}
If $\delta>\delta_0$, then \eqref{eq:euclidean-finite-rigidity} holds trivially, hence we consider only the case $\delta \le \delta_0$.
First we will show that if $\delta_0$ is small enough, then 
\begin{equation} \label{eq: bound on t}
    \min_{\varepsilon \in \{0,1\}}\|t-\varepsilon/2 \|_{\T} \le C \frac{\sqrt{\delta}}{L^2}
\end{equation}
for some absolute constant $C>0$.

Expanding the square and changing variable $h=k-l$ in the spirit of Weyl we get 
\begin{equation} 
\label{eq: Weyl diff}
\begin{split}
    |Q_{L}(t, \theta)|^2&= \frac{1}{(2L+1)^2} \sum_{k,l=-L}^L e\big(t(k^2-l^2)+\theta(k-l) \big) \\
    & =\frac{1}{(2L+1)^2} \sum_{h=-2L}^{2L} e\big( th^2+\theta h\big) \sum_{l=\max(-L,-L-h)}^{\min(L,L-h)}e(2t hl).
\end{split}
\end{equation}
Using \eqref{eq: Weyl diff} and \eqref{eq:euclidean-linear-phase} one obtains
\begin{align*}
    \delta&=1-|Q_{L,\theta}(t)|^2 \ge  \frac{1}{N^2} \sum_{h=-2L}^{2L} \Big(N-|h| \Big) \Big(1- \left|D_{N-|h|}(2 ht) \right|\Big) \\
    &\ge \frac{1}{2N^2} \sum_{h=-2L}^{2L} \Big(N-|h| \Big) \Big(1- \left|D_{N-|h|}(2 ht) \right|^2\Big) \\
    &\ge  \frac{c}{L} \sum_{h=\lceil L/2 \rceil}^L  \min\{1, L^2 \|2t h \|_{\T}^2 \}.
\end{align*}

From the above we conclude that for sufficiently large constant $C>0$ the proportion of $h \in \{ \lceil L/2 \rceil, \ldots, L \}$ satisfying
\begin{equation*} 
    \| 2th \|_{\T} \le \frac{C\sqrt{\delta}}{L}
\end{equation*}
is greater than $3/4$, in particular there are two consecutive elements $h_0,h_0+1$ of that interval satisfying the inequality above. Then by triangle inequality we get 
\begin{equation*}
    \|2 t\|_{\T} \le  \|2 t(h_0+1)\|_{\T} +  \|2 th_0\|_{\T} \le \frac{2C\sqrt{\delta}}{L}.
\end{equation*}
Thus for some $\varepsilon \in \{0,1\}, m \in \mathbb Z, \eta \in \mathbb R$ we have 
\begin{equation*}
    t=  m+\varepsilon/2+ \eta, \quad \left |\eta \right| \le  \frac{C\sqrt{\delta}}{L}.
\end{equation*}
For $h_0$ as before we have 
\begin{equation*}
    \|2 \eta h_0 \|_{\T}= \|2th_0 \|_{\T} \le \frac{C\sqrt{\delta}}{L},
\end{equation*}
moreover $2 |\eta|h_0 \le 2C\sqrt{\delta_0}$, so if $\delta_0$ is small enough we get $|2 \eta h_0|<1/2$ and hence from previous inequality we see that
\begin{equation*}
    2|\eta|h_0 \le \frac{C\sqrt{\delta}}{L}.
\end{equation*}
Since $h_0 \ge L/2$, the inequality above implies
\begin{equation} \label{eq: t-eps bound}
    \|t-\varepsilon/2 \|_{\T}=|\eta| \le \frac{C'\sqrt{\delta}}{L^2},
\end{equation}
which establishes \eqref{eq: bound on t}. 

Now we will show that for the same choice of $\varepsilon$ as for $t$, we have 
\begin{equation*} 
%\label{theta bound}
    \| \theta -\varepsilon/2 \|_{\T} \le \frac{C'' \sqrt{\delta}}{L}.
\end{equation*}
Using the fact that $|Q_{L}(t,\theta)|=|Q_{L}(t+ 1/2,\theta+1/2)|$ one can see that 
$|Q_L(t, \theta)|=|Q_L(\eta, \beta)|$ for some $\beta \in [-1/2, 1/2]$ such that 
\begin{equation*}
    |\beta|= \|\theta -\varepsilon/2 \|_{\T}.
\end{equation*}
Note that we have 
\begin{equation}
\label{eq: eta, beta}
\begin{split}
    \delta=1-|Q_L(\eta,\beta)|^2&=\frac{1}{N^2} \sum_{k,l=-L}^{L}\Big(1-\cos\big(2\pi\eta(k^2-l^2)+2\pi\beta(k-l)\big) \Big) \\
    &\gtrsim \frac{1}{N^2} \sum_{k,l=-L}^{L} \left\| \eta(k^2-l^2)+\beta(k-l)\right \|_{\T}^2, 
\end{split}
\end{equation}
where at the end we have used the inequality $1- \cos(2\pi x) \gtrsim  \|x \|_{\T}^2$. Applying \eqref{eq: t-eps bound} and \eqref{eq: eta, beta} we obtain 
\begin{align*}
    \frac{1}{N^2} \sum_{k,l=-L}^{L} \left\| \beta(k-l)\right \|_{\T}^2 &\lesssim \frac{1}{N^2} \sum_{k,l=-L}^{L} \left\| \eta(k^2-l^2)+\beta(k-l)\right \|_{\T}^2 \\
    &+  \frac{1}{N^2} \sum_{k,l=-L}^{L}\left\| \eta(k^2-l^2) \right \|_{\T}^2 \lesssim \delta + L^4 |\eta|^2 \lesssim \delta.
\end{align*}
However, note that due to \eqref{eq:euclidean-linear-phase} and the comparison $1- \cos(2 \pi x) \lesssim \|x \|_{\T}^2$, which holds for all $x \in \mathbb R$, we get
\begin{align*}
    c\min\{1, L^2\|\beta \|_{\T}^2 \} \le1-|D_N(\beta)|^2&=  \frac{1}{N^2} \sum_{k,l=-L}^{L} \big(1-\cos\left(2\pi\beta(k-l) \right) \big)   \\ &\lesssim\frac{1}{N^2} \sum_{k,l=-L}^{L} \left\| \beta(k-l)\right \|_{\T}^2 \lesssim \delta,
\end{align*}
the above combined with definition of $\beta$ and \eqref{eq: t-eps bound} gives 
\begin{equation*}
    L^4\|t - \varepsilon/2 \|_{\T}^2+ L^2 \| \theta - \varepsilon/2 \|_{\T}^2 \lesssim \delta
\end{equation*}
for some $\varepsilon \in \{0,1\}$, which concludes the proof.
\end{proof}
Now we will use Lemma \ref{lem:euclidean-quadratic-phase} to deduce a bound of a similar shape for $\psi_r(t,\theta)$, which will be crucial to handle error terms in the proof of Lemma \ref{lemma:euclidean-coefficient-estimate}, where we shall estimate $|\mathcal{B}_n^d|$.
\begin{corollary} \label{corollary: psi decay}
 For $r \in (0,1)$, $t,\theta \in \R$ let 
 \[
 \Delta_r(t,\theta)= \min_{\varepsilon \in \{0,1 \}} \left\{ \frac{\|t-\varepsilon/2\|_{\T}^2}{\rho^2}+ \frac{\| \theta-\varepsilon/2 \|_{\T}^2}{\rho} \right\}.
 \]
 Then there exists an absolute constant $c>0$ such that 
 \[
 |\psi_r(t,\theta)| \le \exp\big(-cr \min\{1,\Delta_{r}(t,\theta)\}\big)
 \]
\end{corollary}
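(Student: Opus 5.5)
The plan is to write $H_\theta(re(t))$ as an average of the finite quadratic exponential sums $Q_L(t,\theta)$ against a positive weight, and then apply Lemma~\ref{lem:euclidean-quadratic-phase}. Summation by parts in $k$ gives this representation. Write $r^{k^2}=\sum_{L\ge |k|}(r^{L^2}-r^{(L+1)^2})$. Then
\[
H_\theta(re(t))=\sum_{L\ge0}\bigl(r^{L^2}-r^{(L+1)^2}\bigr)(2L+1)\,Q_L(t,\theta),
\]
and the same identity with $t=\theta=0$ gives
\[
H_0(r)=\sum_{L\ge 0}w_L,\qquad w_L:=\bigl(r^{L^2}-r^{(L+1)^2}\bigr)(2L+1)\ge0 .
\]
Hence $\psi_r(t,\theta)$ is a convex combination $\sum_L (w_L/H_0(r))\,Q_L(t,\theta)$, and
\[
1-|\psi_r|\ \ge\ \sum_{L} \frac{w_L}{H_0(r)}\bigl(1-|Q_L|\bigr)\ \ge\ \frac12\sum_{L} \frac{w_L}{H_0(r)}\bigl(1-|Q_L|^2\bigr).
\]
It therefore suffices to show that a fixed proportion, at least $c r$, of the weight sits on indices $L\ge2$ with $L\asymp\rho^{-1/2}$. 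By Lemma~\ref{lem:euclidean-quadratic-phase}, each such $L$ satisfies $1-|Q_L|^2\ge c\min\{1,\Delta_r(t,\theta)\}$, because $L^4\asymp\rho^{-2}$ and $L^2\asymp\rho^{-1}$. The conclusion then follows from $|\psi_r|\le 1-cr\min\{1,\Delta\}\le \exp(-cr\min\{1,\Delta\})$.

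The weight estimate is the main step, and it splits into two regimes of $r$.

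\emph{Regime $r\ge 1/2$.} Here $\rho\asymp\tau=-\log r$. Take $L$ in the range $A^{-1}\tau^{-1/2}\le L\le A\tau^{-1/2}$ with $A$ a large absolute constant; such $L$ satisfy $L\ge 2$ once $\tau$ is small. For these $L$ one has $w_L\gtrsim L\cdot L\tau\, e^{-L^2\tau}$, since $r^{L^2}-r^{(L+1)^2}=r^{L^2}(1-r^{2L+1})$ and $1-r^{2L+1}\asymp \min\{1,L\tau\}$. Summing over the range gives a total weight $\gtrsim \tau^{-1/2}\asymp H_0(r)$ by Lemma~\ref{lem: basic aproximation for mu}. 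For $\tau$ bounded below but $r\ge1/2$, I would instead use $L=2$ directly, since $w_2\gtrsim1$ there. In that case $\rho\asymp1$, so $\Delta_r$ is comparable to the unnormalised distance, and Lemma~\ref{lem:euclidean-quadratic-phase} with $L=2$ (whose constant is absolute) gives the needed bound.

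\emph{Regime $r<1/2$.} Here $\rho\asymp1$ and the claimed decay is only $e^{-cr\min\{1,\Delta\}}$, so the required bound is $1-|\psi_r|\gtrsim r\min\{1,\Delta\}$. The weight $w_2/H_0(r)\asymp r^4$ is too small for this. I would therefore handle small $r$ separately and directly. Write $H_\theta(re(t))=1+2r e(t)\cos(2\pi\theta)+O(r^4)$, so that
\[
|\psi_r|^2=\frac{1+4r\cos(2\pi t)\cos(2\pi\theta)+O(r^2)}{1+4r+O(r^2)} .
\]
This gives $1-|\psi_r|\gtrsim r\bigl(1-\cos(2\pi t)\cos(2\pi\theta)\bigr)-O(r^2)$. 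The quantity $1-\cos(2\pi t)\cos(2\pi\theta)$ vanishes only at $(0,0)$ and $(1/2,1/2)$ modulo $\Z^2$, and is $\gtrsim \min\{1,\Delta\}$ since $\rho\asymp1$.

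The $O(r^2)$ error is the obstacle in this regime. Close to the exceptional points the linear term is quadratically small, and one must check that the $r^2$ terms vanish to the same order there. They do: by the symmetries $H_{\theta+1/2}(-z)=H_\theta(z)$ and $|\psi_r|\le1$, the function $1-|\psi_r|$ has a double zero at $(0,0)$ and at $(1/2,1/2)$. This suggests a cleaner route for $r\le r_0$: compute $1-|\psi_r|^2$ exactly as $H_0(r)^{-2}$ times
\[
\sum_{k,l} r^{k^2+l^2}\bigl(1-\cos(2\pi(t(k^2-l^2)+\theta(k-l)))\bigr),
\]
and keep only the terms with $\{k,l\}=\{0,\pm1\}$. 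These terms already give $\gtrsim r\,(2-\cos(2\pi(t+\theta))-\cos(2\pi(t-\theta)))$, which is $\gtrsim r\min\{1,\Delta\}$. Since all terms in the sum are nonnegative, no error term appears at all. I expect this positivity trick, which is the $(k,l)$ expansion already used in the proof of Lemma~\ref{lem:euclidean-quadratic-phase}, to be what makes the small-$r$ case clean. Conversely, for $r\ge r_0$ the averaging over $L$ above applies.
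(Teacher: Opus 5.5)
Your proposal is correct. In the small-$r$ regime, your final route coincides with the paper's: you keep only the pairs $(k,l)=(\pm1,0),(0,\pm1)$ in the nonnegative double sum for $1-|\psi_r|^2$.

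For $r\ge 1/2$ you take a genuinely different route. The paper stays with the same positive identity
\[
1-|\psi_r(t,\theta)|^2=\frac{1}{2\Theta(r)^2}\sum_{k,l\in\Z}r^{k^2+l^2}\bigl|e(tk^2+\theta k)-e(tl^2+\theta l)\bigr|^2 .
\]
It discards all pairs outside the box $|k|,|l|\le L=\lceil\rho^{-1/2}\rceil$. On that box $r^{k^2}/\Theta(r)\gtrsim L^{-1}$, so the truncated sum is $\gtrsim 2\bigl(1-|Q_L|^2\bigr)$ for this single $L$. Lemma~\ref{lem:euclidean-quadratic-phase} then finishes the argument, with no further case distinction, since $L\ge 2$ automatically when $\rho\le 1/2$.

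You instead use Abel summation to write $\psi_r$ itself as the convex combination $\sum_L (w_L/H_0(r))\,Q_L$, and then apply the triangle inequality. This obliges you to estimate how the weights $w_L$ are distributed around $L\asymp\tau^{-1/2}$. It also forces you to treat separately the range where $\tau=-\log r$ is bounded below, which you handle with $L=2$.

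Your identity has the merit of exhibiting $\psi_r$ as an average of the normalized quadratic sums and bounding $1-|\psi_r|$ directly. The paper's truncation is shorter and uniform in $r\in[1/2,1)$.

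Your convex-combination route would also cover small $r$ without switching methods. There $w_1/H_0(r)\asymp r$, and $1-|Q_1(t,\theta)|^2$, with $Q_1=\frac13\bigl(1+e(t+\theta)+e(t-\theta)\bigr)$, vanishes only at the two exceptional points, and only to second order.
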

Even though the exponent on the right-hand side of the above inequality might be small, in application we will consider a product of $d$ such terms for various different $\theta \in \R$ which will give us sufficient decay for applications.
\begin{proof}
    Due to the inequality $1-x \le e^{-x}$ it suffices to show that 
    \[
    1-|\psi_r(t,\theta)|^2 \gtrsim r \min\{1,\Delta_{r}(t,\theta)\}.
    \]
    Note that we have the following identity
    \begin{equation*}
        1-|\psi_r(t,\theta)|^2=\frac{1}{2\Theta(r)^2} \sum_{k,l \in \Z} r^{k^2+l^2} \big|e\big(\ t(k^2-l^2)+\theta(k-l)\big)-1 \big|^2.
    \end{equation*}
    
    If $r \in(0,1/2)$ then using $\Theta(r) \asymp 1$ and discarding all terms except for $(k,l) =\pm(1,0) $ gives 
    \begin{align*}
        1-|\psi_r(t,\theta)|^2 &\gtrsim r \Big(|e(t+\theta)-1|^2 +|e(t-\theta)-1|^2 \Big) \\
        &=4r\big(1-\cos(2\pi t) \cos(2\pi \theta) \big)=:4rS(t,\theta).
    \end{align*}
    Note that $S(t,\theta)$ as a function on $\T^2$ has zeros only at $(0,0)$ and $ (1/2,1/2)$. Therefore standard compactness and Taylor approximation arguments imply
    \[
    S(t,\theta) \gtrsim  \min_{\varepsilon \in \{0,1 \}} \left(\|t-\varepsilon/2\|_{\T}^2+ \| \theta-\varepsilon/2 \|_{\T}^2 \right)
    \]
    and the conclusion for $r \in (0,1/2)$ follows. 

    Now consider $r \in [1/2,1)$ (so that $-\log r \asymp 1-r=\rho$) and set $L=\lceil \rho^{-1/2} \rceil.$ Then by $\Theta(r) \asymp \rho^{-1/2}$ (see Lemma \ref{lem: basic aproximation for mu}) for every $|k| \le L$ we obtain
    \[
    \frac{r^{k^2}}{\Theta(r)}=\frac{e^{k^2\log r }}{\Theta(r)} \gtrsim L^{-1}.
    \]
     Finally, using Lemma \ref{lem:euclidean-quadratic-phase} we see that 
    \begin{align*}
        1-|\psi_r(t,\theta)|^2&=\frac{1}{2\Theta(r)^2} \sum_{k,l \in \Z} r^{k^2+l^2} \big|e\big(\ tk^2+\theta k\big)-e\big(\ tl^2+\theta l\big) \big|^2 \\
        &\gtrsim \frac{1}{(2L+1)^2} \sum_{k,l=-L}^L \big|e\big(\ tk^2+\theta k\big)-e\big(\ tl^2+\theta l\big) \big|^2  \\
        &=2\bigl(1-|Q_L(t,\theta)|^2\bigr) \\
        &\gtrsim \min\left\{1,
 \min_{\varepsilon\in\{0,1\}}
 \left(
 L^4\|t-\varepsilon/2\|_{\T}^2
 +L^2\|\theta-\varepsilon/2\|_{\T}^2
 \right)\right\} \\
 &\asymp r \min\{1,\Delta_{r}(t,\theta)\},
    \end{align*}
    which concludes the proof.
\end{proof}
\subsection{Saddle-point analysis of the normalizing factor}
We now apply Corollary \ref{corollary: psi decay} to approximate $|\mathcal B_{n}^d|$ by the saddle-point method. Lemma \ref{lemma:euclidean-coefficient-estimate} generalizes \cite[Theorem~3.4, (3.4)]{NiksinskiWrobel}. Throughout this section, $r \in (0,1)$ will denote the unique number satisfying \eqref{eq: definition of r}. Recall 
definition of $\Phi_{d,n}(t)$ from \eqref{eq:euclidean-Phi} and let
\begin{equation} \label{eq: defn of I_{d,n} }
    I_{d,n}:= \int_{-1/2}^{1/2} \Phi_{d,n}(t) \dd t =\rho \Theta(r)^{-d} r^n |\mathcal{B}_n^d|.
\end{equation}
the second equality follows from orthogonality. Before saddle point computation we will obtain a bound on $|\Phi_{d,n}(t)|$, which will be also useful when completing the proof of Proposition \ref{prop:euclidean-Gaussian-approximation} in the next subsections. Here we recall the definitions of $V$ and $\kappa$ given in \eqref{eq: def V kappa}. 
\begin{lemma} \label{lemma:decay of Phi} For every constant $c_1 \in (0,1/2)$ there exists $c_2>0$ such that we have
\begin{equation}\label{eq:euclidean-Phi-local-decay}
 |\Phi_{d,n}(t)|\leq e^{-c_2Vt^2},
 \qquad \text{for} \ |t|\leq c_1\rho.
\end{equation}
and
\begin{equation}\label{eq:euclidean-Phi-global-decay}
 |\Phi_{d,n}(t)|\leq e^{-c_2\kappa}, \qquad \text{for} \ c_1 \rho \le |t| \le 1/2.
\end{equation}
\end{lemma}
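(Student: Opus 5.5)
We bound the two factors of $\Phi_{d,n}$ separately: the rational factor $\rho/(1-re(t))$, and the theta quotient $(H_0(re(t))/H_0(r))^d=\psi_r(t,0)^d$, using Corollary~\ref{corollary: psi decay} with $\theta=0$. Since $|e(-nt)|=1$, the rational factor is harmless:
\[
|1-re(t)|^2=\rho^2+4r\sin^2(\pi t)\ge\rho^2,
\]
so $|\rho/(1-re(t))|\le 1$ for all $t$. Hence $|\Phi_{d,n}(t)|\le|\psi_r(t,0)|^d$. On the local interval, however, we will want to keep the extra decay from this factor, because it supplies the $r/\rho^2$ part of $V$.

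\textbf{Local range $|t|\le c_1\rho$.} Here $\|t\|_{\T}=|t|$ and $\|t-1/2\|_{\T}\ge 1/2-c_1\rho\ge 1/2-c_1>0$. So in $\Delta_r(t,0)$ the $\varepsilon=1$ branch is at least $\|1/2\|^2_{\T}/\rho=1/(4\rho)\ge 1/4$, while the $\varepsilon=0$ branch equals $t^2/\rho^2\le c_1^2<1/4$. Thus $\min\{1,\Delta_r(t,0)\}\gtrsim t^2/\rho^2$, and the corollary gives
\[
|\psi_r(t,0)|^d\le\exp(-c\,dr\,t^2/\rho^2).
\]
For the rational factor,
\[
|\rho/(1-re(t))|^2=\bigl(1+4r\sin^2(\pi t)/\rho^2\bigr)^{-1}\le\exp\bigl(-c\min\{1,rt^2/\rho^2\}\bigr).
\]
Since $|t|\le c_1\rho$, we have $rt^2/\rho^2\le c_1^2 r<1$, so this is $\le \exp(-c\,r t^2/\rho^2)$. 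Multiplying the two bounds gives $\exp(-c(r+dr)t^2/\rho^2)$. Because $V\asymp dr/\rho^2$ and $r/\rho^2\le V$, this is at most $e^{-c_2Vt^2}$; in fact the theta factor alone already suffices, since $dr/\rho^2\gtrsim V$.

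\textbf{Global range $c_1\rho\le|t|\le1/2$.} We show $\min\{1,\Delta_r(t,0)\}\gtrsim_{c_1}1$.
\begin{itemize}
\item The $\varepsilon=0$ branch is $\ge c_1^2$.
\item The $\varepsilon=1$ branch is $\|t-1/2\|^2_{\T}/\rho^2+1/(4\rho)\ge 1/4$, since $\rho<1$.
\end{itemize}
Hence $|\psi_r(t,0)|\le e^{-c'r}$ with $c'$ depending on $c_1$, and therefore $|\Phi_{d,n}(t)|\le e^{-c'dr}$. Finally $dr\gtrsim\kappa$ by Lemma~\ref{lem: basic aproximation for mu}, because $\kappa=\rho^2V\asymp dr$. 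This gives \eqref{eq:euclidean-Phi-global-decay}.

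The only delicate point is the near-$1/2$ branch: the point $t=1/2$ with $\theta=0$ is not exceptional, because the exceptional point is $(1/2,1/2)$. This is exactly why the $1/(4\rho)$ term keeps $\Delta_r$ bounded below there. Everything else is direct substitution into Corollary~\ref{corollary: psi decay}, with the constant $c_2$ taken as the minimum of the two ranges' constants.
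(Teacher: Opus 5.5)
Your proposal is correct and is essentially the paper's own argument: bound $|\rho/(1-re(t))|\le 1$, then apply Corollary~\ref{corollary: psi decay} with $\theta=0$. The key checks are that $\min\{1,\Delta_r(t,0)\}=t^2/\rho^2$ on the local range and $\min\{1,\Delta_r(t,0)\}\ge\min\{c_1^2,1/4\}$ on the global range (the $\varepsilon=1$ branch being at least $1/(4\rho)\ge 1/4$), after which you conclude via $V\asymp dr/\rho^2$ and $\kappa\asymp dr$. The extra decay you extract from the rational factor on the local range is harmless but unnecessary, as you yourself observe.
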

\begin{proof}
Suppose first that $|t|\leq c_1\rho$. {Since $c_1<1/2$ from the definition of $ \Delta_r(t,\theta)$, we have
\[
 \Delta_r(t,0)\geq
 \min\left\{\frac{t^2}{\rho^2},\frac14\right\}=\frac{t^2}{\rho^2}
\]
and Corollary \ref{corollary: psi decay} gives
\[
 \left|\frac{H_0(re(t))}{H_0(r)}\right|^d
 \leq
 \exp\left(-c\frac{drt^2}{\rho^2}\right)
 \leq e^{-c_2Vt^2}.
\]}
Moreover,
\[
 \left|\frac{\rho}{1-re(t)}\right|^2
 =\frac{\rho^2}{\rho^2+2r(1-\cos(2\pi t))}\leq1
\]
and this proves \eqref{eq:euclidean-Phi-local-decay}.

{If $c_1\rho\leq |t|\leq1/2$, then
$\min\{1,\Delta_r(t,0)\}\ge \min\{c_1^2,\frac14\}$. Hence Corollary
\ref{corollary: psi decay}, together with the preceding bound for the
rational factor, gives
\[
 |\Phi_{d,n}(t)|\leq e^{-c \min\{c_1^2,\frac14\} dr}\leq e^{-c_2\kappa},
\]
because $\kappa\asymp dr$.} This proves \eqref{eq:euclidean-Phi-global-decay}.

\end{proof}
We are now ready to give our approximation for $I_{d,n}.$
\begin{lemma}
\label{lemma:euclidean-coefficient-estimate}
  For every $d\geq1$ and every integer $1\leq n\leq d^4$, we have
    \[
    I_{d,n} \asymp V^{-1/2}.
    \]
\end{lemma}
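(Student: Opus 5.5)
The plan is a standard saddle-point computation. We split $I_{d,n}=\int_{|t|\le c_1\rho}\Phi_{d,n}+\int_{c_1\rho\le|t|\le1/2}\Phi_{d,n}$ with a small absolute $c_1$. By \eqref{eq:euclidean-Phi-global-decay} the outer piece is at most $e^{-c_2\kappa}$. Since $V\lesssim\kappa^7$ by \eqref{eq: bound on V in terms of power of kappa}, this gives $e^{-c_2\kappa}\lesssim V^{-1/2}e^{-c_2\kappa}\kappa^{7/2}$. This is $\le \epsilon V^{-1/2}$ once $\kappa$ exceeds a large absolute constant. The upper bound $|I_{d,n}|\lesssim V^{-1/2}$ then follows immediately from \eqref{eq:euclidean-Phi-local-decay}, since $\int e^{-c_2Vt^2}\dd t\lesssim V^{-1/2}$.

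For the lower bound we Taylor expand $\log\Phi_{d,n}(t)$ on $|t|\le c_1\rho$. Write $z=re(t)$, so that
\[
\log\Phi_{d,n}(t)=-2\pi int+\log\rho-\log(1-z)+d\log H_0(z)-d\log H_0(r).
\]
The function $F(w)=-n w-\log(1-re^{w})+d\log H_0(re^{w})$, with $w=2\pi it$, has $F'(0)=-n+r/\rho+d\mu(r)=0$ by \eqref{eq: definition of r} and $F''(0)=V$. This gives $\log\Phi_{d,n}(t)=-2\pi^2Vt^2+E(t)$. For the cubic remainder we need $|F'''(w)|\lesssim V/\rho$ for $|w|\le 2\pi c_1\rho$. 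The $(1-re^w)^{-1}$ part is explicit and contributes $O(\rho^{-3})\le O(V/\rho)$. For the theta part we use Cauchy estimates. Since $H_0$ does not vanish on the disc $|z-r|\le c\rho$ (for $r\ge1/2$ this follows from Jacobi's product or from the Poisson-summation representation $\Theta(e^{-\tau})=\sqrt{\pi/\tau}(1+O(e^{-\pi^2\mathrm{Re}(1/\tau)}))$, valid for complex $\tau$ near the positive axis), the function $\log H_0(re^{w})$ is analytic and bounded there, up to the constant $\log H_0(r)$, by $O(1)$ times $\log$-scale quantities. Rather than relying on that, it is cleaner to use the same Poisson formula directly: $\log\Theta(e^{-\tau})=\tfrac12\log(\pi/\tau)+O(e^{-c/|\tau|})$ for $|\arg\tau|$ small. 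From this, the $k$-th $w$-derivatives are $O(\rho^{-k})$, uniformly for $|w|\lesssim\rho$. For $r<1/2$ the power series gives derivatives $O(r)$. In both cases $d\,|\partial_w^3\log H_0|\lesssim dr/\rho^3\asymp V/\rho$. Hence $|E(t)|\lesssim V|t|^3/\rho$.

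Now rescale by $t=v/\sqrt V$. On $|v|\le \kappa^{1/6}$ the error satisfies $|E|\lesssim|v|^3/\sqrt\kappa\le\kappa^{0}\cdot\kappa^{-0}$, and more precisely it is small, so $\mathrm{Re}\,\Phi\ge\tfrac12e^{-2\pi^2v^2}$ there. For $\kappa^{1/6}\le|v|\le c_1\sqrt\kappa$ we have $|\Phi|\le e^{-c_2v^2}$ by \eqref{eq:euclidean-Phi-local-decay}, which is negligible. This yields
\[
\mathrm{Re}\,I_{d,n}\ge V^{-1/2}\Big(c-Ce^{-c\kappa^{1/3}}-C\kappa^{7/2}e^{-c_2\kappa}\Big)\gtrsim V^{-1/2}
\]
for $\kappa\ge K_0$. (Note that $I_{d,n}>0$ anyway, being a positive multiple of $|\mathcal B_n^d|$.)

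The main obstacle is the regime $\kappa\lesssim K_0$, i.e.\ $\min\{n,d\}$ bounded, where asymptotics give nothing. Here we argue directly. Using $I_{d,n}=\rho\Theta(r)^{-d}r^n|\mathcal B_n^d|$ and \eqref{eq: kappa, r , rho properties}, we need $I_{d,n}\gtrsim V^{-1/2}=\rho\kappa^{-1/2}\asymp\rho$. It therefore suffices to show $\Theta(r)^{-d}r^n|\mathcal B_n^d|\gtrsim1$ when $\min\{n,d\}\le K_0$. One option is a truncation of the generating function identity, $\sum_m|\mathcal B_m^d|r^m=\Theta(r)^d/\rho$, which must be concentrated near $m=n$ with width $\sqrt V\asymp\sqrt\kappa/\rho$. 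An alternative, valid in all cases, is a second-moment argument. The probability measure $P(m)\propto|\mathcal B_m^d|r^m$ has mean $n$ and variance about $V$. So $\Theta^{-d}\rho^{-1}\max_m|\mathcal B_m^d|r^m\gtrsim V^{-1/2}$. Then a local comparison, using monotonicity of $|\mathcal B_m^d|$ and the shift $r^{m}$ versus $r^n$ over a range $|m-n|\lesssim\sqrt V$, where $r^{\sqrt V}=e^{-O(\sqrt\kappa)}=O(1)$, transfers this bound to $m=n$. I would attempt the bounded-$\kappa$ case this way, combined with upper bounds, and expect the care needed to control constants there to be the delicate point.
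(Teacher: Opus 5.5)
Your large-$\kappa$ argument is essentially the paper's: Taylor expansion of $\log\Phi_{d,n}$ with vanishing linear term and quadratic coefficient $-2\pi^2V$, a third-derivative bound $\lesssim V/\rho$ via Jacobi's imaginary transformation, the rescaling $t=v/\sqrt V$, and the decay bounds of Lemma~\ref{lemma:decay of Phi} for the tails. Your observation that the upper bound $I_{d,n}\lesssim V^{-1/2}$ holds for all $(d,n)$, because $e^{-c_2\kappa}\lesssim V^{-1/2}\kappa^{7/2}e^{-c_2\kappa}\lesssim V^{-1/2}$, is correct and slightly streamlines the paper. There is one slip. On $|v|\le\kappa^{1/6}$ the cubic error $|v|^3/\sqrt\kappa$ is only $O(1)$, not small, so $\operatorname{Re}\Phi_{d,n}\ge\frac12e^{-2\pi^2v^2}$ does not follow there. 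Take $|v|\le B$ with $B$ a fixed large constant, as the paper does, or $|v|\le\kappa^{a}$ with $a<1/6$.

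The genuine gap is the lower bound for bounded $\kappa$. You leave it unproved, and the mechanism you sketch fails in the only nontrivial case. Since $\kappa\asymp\min\{n,d\}$ and $n\le d^4$, bounded $\kappa$ means one of two things. Either $d$ is bounded, so only finitely many pairs $(n,d)$ occur and one adjusts constants. Or $n$ is bounded while $d\to\infty$. In the latter case $r\asymp n/(n+d)\asymp n/d\to0$, $\rho\asymp1$ and $V\asymp1$, so your claim $r^{\sqrt V}=e^{-O(\sqrt\kappa)}$ is false: $r^k\le(Cn/d)^k$ for every integer $k\ge1$.

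Your second-moment step is sound. Indeed $P(m)=\rho\Theta(r)^{-d}r^m|\mathcal B_m^d|$ is a probability measure with mean exactly $n$ and variance exactly $V$. Hence some $m_0$ with $|m_0-n|\le2\sqrt V$ has $P(m_0)\gtrsim V^{-1/2}$. But transferring this to $P(n)=I_{d,n}$ via monotonicity of $m\mapsto|\mathcal B_m^d|$ fails. When $m_0<n$ it only yields $P(n)\ge r^{n-m_0}P(m_0)$, which loses a power of $d$. When $m_0>n$ it yields nothing.

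The missing ingredient is an actual count of $|\mathcal B_n^d|$ in this regime, as in the paper. All vectors with exactly $n$ coordinates equal to $\pm1$ lie in $\mathcal B_n^d$, so $|\mathcal B_n^d|\ge2^n\binom dn\gtrsim_n d^n$. Since $r\gtrsim n/d$, this gives $r^n|\mathcal B_n^d|\gtrsim_n1$. Moreover $\Theta(r)^d\le e^{Cdr}\lesssim_n1$ because $dr\asymp n$. Hence $\Theta(r)^{-d}r^n|\mathcal B_n^d|\gtrsim1$, which is exactly what you reduced the bounded-$\kappa$ case to. The paper also proves the matching upper count $|\mathcal B_n^d|\lesssim_n d^n$, which your universal upper bound makes unnecessary.
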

\begin{proof}
  Let \(c_\Phi>0\) be a
sufficiently small absolute constant. First, we will give an upper bound for the 3rd-order derivatives of expressions that will appear soon. Note that for all $t \in [-c_\Phi\rho, c_\Phi\rho]$ we have
\begin{equation}\label{eq:euclidean-geometric-third-derivative}
 \left|\frac{d^3}{d t^3}\log\frac\rho{1-re(t)}\right|
 \lesssim \frac r{\rho^3}.
\end{equation}

Now we will prove the same inequality for \(\log H_0(re(t))\), that is
\begin{equation}\label{eq:euclidean-H_0 third derivative bound}
 \left|\frac{d^3}{d t^3} \log H_0(re(t))\right|
 \lesssim \frac r{\rho^3}
\end{equation}
for all $t \in [-c_\Phi\rho, c_\Phi\rho]$. It is worth pointing out that $H_0$ does not vanish inside the complex unit disk due to Jacobi's product formula; hence the above logarithm exists.
Fix $r_0<1$ sufficiently close to
one. We will consider separately the cases $r\le r_0$ and $r_0<r<1$. In the first case, $\rho \asymp 1,$ and due to $H_0(z)$ being holomorphic and nonvanishing inside the complex unit disk, one can see that \eqref{eq:euclidean-H_0 third derivative bound} trivially holds.
Now suppose that $r_0<r<1$. Write
\[
 re(t)=e^{-\pi a},
 \qquad
 a=\frac{-\log r-2\pi i t}{\pi}.
\]
Then $\operatorname{Re}a\asymp\rho$, $|a|\asymp\rho$, and hence
\[
 \operatorname{Re}\frac1a
 =\frac{\operatorname{Re}a}{|a|^2}\asymp \rho^{-1}.
\]
After a standard application of the Poisson summation formula (or, more precisely, Jacobi's imaginary transformation; see, for instance,
\cite[\S~21.51, pp.~475--476]{WhittakerWatson}), we get
\[
 H_0(e^{-\pi a})
 =\frac{1}{\sqrt{a}}
 \sum_{\ell\in\mathbb Z}
 \exp\left(-\frac{\pi\ell^2}{a}\right),
\]
where the branch of the square root is chosen so that it agrees with the standard square root on positive reals. Inserting back the value of $a$, we have
\[
 H_0(re(t))
 =\sqrt{\frac{\pi}{-\log r-2\pi i t}}
 \sum_{\ell\in\mathbb Z}
 \exp\left(-\frac{\pi^2\ell^2}{-\log r-2\pi i t}\right).
\]
Note that because
\[
 |e^{-\pi \ell^2/a}|
 =e^{-\pi \ell^2 \operatorname{Re}(1/a)}
 \le e^{-c\ell^2/\rho},
\]
the terms of the last series expansion for $H_0(re(t))$, together with all their fixed-order derivatives, converge rapidly to zero in $\ell$ (faster than exponentially).

If $r_0$ is sufficiently close to 1, due to this rapid convergence, one has
\[
 \left|\frac{d^j}{d t ^j} H_0(re(t))\right|
 \lesssim_j \rho^{-j-1/2}
\]
for all integers $j \ge 0$
and
\[
 |H_0(re(t))| \gtrsim \rho^{-1/2},
\]
and therefore \eqref{eq:euclidean-H_0 third derivative bound} holds.

Define
\[
 K(t):=\log\frac\rho{1-re(t)}
 +d\log\frac{H_0(re(t))}{H_0(r)}.
\]
Combining inequalities
\eqref{eq:euclidean-geometric-third-derivative} and
\eqref{eq:euclidean-H_0 third derivative bound} gives, uniformly for
$t \in [-c_\Phi\rho, c_\Phi\rho]$, the following bound:
\begin{equation*}
%\label{eq:euclidean-third-log-derivative}
 |K'''(t)|
 \lesssim\frac{dr}{\rho^3}\asymp\frac V\rho.
\end{equation*}
For $t \in [-c_\Phi\rho, c_\Phi\rho]$, we define
the logarithm of \(\Phi_{d,n}(t)\) by
\[
 \log\Phi_{d,n}(t):=K(t)-2\pi i nt.
\]
Note that at $t=0$ we have
\[
 K'(0)=2\pi i\left(\frac r\rho+d\mu(r)\right)=2\pi in,
 \qquad
 K''(0)=-4\pi^2r\frac d{dr}
 \left(\frac r\rho+d\mu(r)\right)=-4\pi^2V.
\]
Thus Taylor's formula gives
\[
 K(t)=2\pi i nt-2\pi^2Vt^2
 +\frac{t^3}{2}\int_0^1(1-s)^2K'''(st)\dd s.
\]
This way we obtain
\begin{equation}\label{eq:euclidean-saddle-expansion}
 \log\Phi_{d,n}(t)
 =-2\pi^2Vt^2+O\left(\frac V\rho|t|^3\right),
 \qquad \text{for } |t|\leq c_\Phi\rho.
\end{equation}

Fix $B\geq1$ sufficiently large. If $|v|\leq B$ and $\kappa=\rho^2V$ is large enough in terms
of $B$, we have
\[
 \left|\frac v{\sqrt V}\right|
 =\frac{\rho|v|}{\sqrt\kappa}
 \leq c_\Phi\rho.
\]
We remark in passing that if $\kappa$ is large, then $V$ is large too.
Thus, for such $B$ and $\kappa$,
\eqref{eq:euclidean-saddle-expansion} gives
\begin{equation}\label{eq:euclidean-Gaussian-limit}
 \Phi_{d,n}\left(\frac v{\sqrt V}\right)
 =e^{-2\pi^2v^2}\left(1+O_B(\kappa^{-1/2})\right).
\end{equation}
Now, changing the variables $v=t\sqrt V$, we have
\begin{equation*}
%\label{eq:euclidean-scaled-coefficient-integral}
 \sqrt V I_{d,n}
 =\int_{-\frac12\sqrt V}^{\frac12\sqrt V}
 \Phi_{d,n}\left(\frac v{\sqrt V}\right)\dd v.
\end{equation*}
Therefore we obtain
\begin{align*}
 &\left|\sqrt V I_{d,n}-(2\pi)^{-1/2}\right|
 =\left|
 \int_{-\frac12\sqrt V}^{\frac12\sqrt V}
 \Phi_{d,n}\left(\frac v{\sqrt V}\right)\dd v
 -\int_{\R}e^{-2\pi^2v^2}\dd v
 \right| \\
 &\leq
 \left|
 \int_{-B}^{B}
 \Phi_{d,n}\left(\frac v{\sqrt V}\right)\dd v
 -\int_{-B}^{B}e^{-2\pi^2v^2}\dd v
 \right|
 +\int_{B\leq|v|\leq{c_\Phi}\rho\sqrt V}
 \left|\Phi_{d,n}\left(\frac v{\sqrt V}\right)\right|\dd v \\
 &\quad+
 \int_{{c_\Phi}\rho\sqrt V\leq|v|\leq\frac12\sqrt V}
 \left|\Phi_{d,n}\left(\frac v{\sqrt V}\right)\right|\dd v
 +\int_{|v|\geq B}e^{-2\pi^2v^2}{\dd v} \\
 &\lesssim_{B}
 \kappa^{-1/2}+e^{-cB^2}+\sqrt V e^{-c\kappa}+e^{-cB^2},
\end{align*}
where, in the second inequality above,
the first term was bounded using \eqref{eq:euclidean-Gaussian-limit},
the second one using \eqref{eq:euclidean-Phi-local-decay},
and the third one using \eqref{eq:euclidean-Phi-global-decay}.
Recall from \eqref{eq: bound on V in terms of power of kappa} that
$\sqrt V\lesssim\kappa^{7/2}$. Thus, if $B$ is large enough and
$\kappa>C_B$ for some large enough constant depending on $B$, we obtain
\begin{equation}\label{eq: I_d,n comparable with V^{-1/2}}
 I_{d,n}\asymp V^{-1/2}.
\end{equation}

It remains to treat the case $\kappa\leq C_B$. Since
$\kappa\asymp\min\{n,d\}$, we have $\min\{n,d\}\lesssim_B1$.
If $d\lesssim_B1$, then there are only finitely many possible pairs
$(n,d)$, and the conclusion follows after adjusting the implicit
constants. We may therefore assume that $d$ is sufficiently large in
terms of $B$. Then $n\lesssim_B1$, $n\leq d$, and
\eqref{eq: kappa, r , rho properties} gives
\[
 r\asymp_Bd^{-1},\qquad \rho\asymp_B1,\qquad V\asymp_B1.
\]
Every $x\in\mathcal B_n^d$ has at most $n$ nonzero coordinates and
$|x_j|\leq\sqrt n,$ for $j=1,\ldots,d$. On the other hand, all vectors with exactly $n$
coordinates equal to $1$ or $-1$ belong to $\mathcal B_n^d$. Hence
\[
 2^n\binom dn
 \leq|\mathcal B_n^d|
 \leq\sum_{j=0}^n\binom dj(2\lfloor\sqrt n\rfloor{+1})^j
 \lesssim_Bd^n,
\]
so {that we have} $|\mathcal B_n^d|\asymp_Bd^n$. Moreover,
$\Theta(r)=1+O(r)$ and $dr\asymp_B1$, whence
$\Theta(r)^d\asymp_B1$ and $r^n|\mathcal B_n^d|\asymp_B1;$ {therefore we obtain}
\[
 I_{d,n}
 =\rho\Theta(r)^{-d}r^n|\mathcal B_n^d|
 \asymp_B1\asymp_BV^{-1/2}.
\]
Since $B$ was fixed as an absolute constant, this proves
\eqref{eq: I_d,n comparable with V^{-1/2}} also in the bounded-$\kappa$
case. Thus the estimate holds for every $1\leq n\leq d^4$.
\end{proof}

    \subsection{Bounds for theta-function quotients at complex arguments}
    Our next goal will be to estimate $|H_\theta(z)/H_0(z)|$ for complex numbers $z\in\C$, which are not necessarily real. These estimates for $z \not\in (0,1)$ are not necessary for the proof of Lemma \ref{lemma:euclidean-coefficient-estimate}. However, they will play a crucial role in estimating error terms when applying the saddle point method to the ball multiplier $ m_n^d(\xi)$. 
    
    Recalling   that \(H_\theta(z)=\vartheta_3(\pi\theta,z)\) in the notation of Whittaker and Watson and using Jacobi's product formula (see for instance \cite[\S\S~21.3 and 21.42, pp.~469--470 and 472--473]{WhittakerWatson}) we have 
    \[
    H_0(z)=\prod_{l=1}^\infty(1-z^{2l})(1+z^{2l-1})^2,
    \]
    in particular $H_0$ doesn't vanish inside the unit disk. Moreover the same Jacobi product formula gives us
    \begin{equation} \label{eq:product for H quotient}
        \frac{H_\theta(z)}{H_0(z)}= \prod_{\ell=1}^\infty\big(1-\omega_l(z) \sin^2(\pi \theta) \big),
    \end{equation}
    where
    \begin{equation*}
        \omega_{\ell}(z)= \frac{4z^{2\ell-1}}{(1+z^{2\ell-1})^2},
    \end{equation*}
    in particular the product converges uniformly on compact subsets of the open unit disk.
     Notice that for $z=r \in (0,1)$ we have $|H_\theta(r)/H_0(r)| \le \exp (- c \frac{r}{1-r} \sin^2 (\pi \theta))$. The next lemma extends this estimate to complex \(z\) which lie sufficiently close to the positive real axis.
    \begin{lemma} \label{lemma: bound for quotient of H(z)}
        Let \(\arg z\in(-\pi,\pi]\) denote the principal argument.  There are
absolute constants $c_0,c>0$ such that, for every $\theta\in\R$ and every
$z\in\mathbb C$ satisfying $0<|z|<1$ and
\begin{equation*}
%\label{eq:euclidean-Stolz-region}
 |\arg z|\leq c_0(1-|z|),
\end{equation*}
we have
\begin{equation} \label{eq: sum omega ell bound}
    \sum_{\ell=1}^\infty |\omega_\ell(z)| \lesssim \frac{|z|}{1-|z|}.
\end{equation}
and
\begin{equation}\label{eq:euclidean-Stolz-decay}
 \left|\frac{H_\theta(z)}{H_0(z)}\right|
 \leq\exp\left(-c\frac{|z|}{1-|z|}\sin^2(\pi \theta)\right).
\end{equation}
\end{lemma}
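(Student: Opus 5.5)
Write $z=|z|e^{i\phi}$ with $|\phi|\le c_0(1-|z|)$, and put $s=|z|$, $\rho=1-s$. The plan is to estimate each factor $\omega_\ell(z)$ in the Jacobi product \eqref{eq:product for H quotient} by its value at the real point $s$. We then use $|1-w|\le e^{-\operatorname{Re} w}$ for the factors with non-negligible $\operatorname{Re}\omega_\ell$.

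\emph{Step 1: comparison of $\omega_\ell(z)$ with $\omega_\ell(s)$.} For $m=2\ell-1$ we have $z^m=s^me^{im\phi}$. As long as $m|\phi|\le c_0 m\rho$ is small, say $m\le 1/\rho$, the factor $e^{im\phi}$ lies near $1$. Then $|1+z^m|\ge 1+s^m\cos(m\phi)\gtrsim 1$, so
\[
|\omega_\ell(z)|\lesssim \frac{s^m}{(1+s^m)^2}\asymp s^m ,
\]
and in fact $\operatorname{Re}\omega_\ell(z)\ge \tfrac12\,\omega_\ell(s)$ once $c_0$ is small. For large $m$ the argument $m\phi$ need not be small. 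Still, $|1+z^m|\ge 1-s^m$, and we will use $s^m\le e^{-m\rho}$. For $m\ge 1/\rho$ this gives $1-s^m\ge 1-e^{-1}$, so $|\omega_\ell(z)|\lesssim s^m$ holds uniformly for all $\ell$.

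Summing over odd $m$,
\[
\sum_\ell|\omega_\ell(z)|\lesssim \sum_{m\text{ odd}}s^m=\frac{s}{1-s^2}\le\frac{s}{\rho},
\]
which is \eqref{eq: sum omega ell bound}. One check remains: when $s$ is small the bound $|1+z^m|\gtrsim1$ is trivial, since $|z^m|\le s\le 1/2$.

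\emph{Step 2: the decay estimate \eqref{eq:euclidean-Stolz-decay}.} Set $w_\ell=\omega_\ell(z)\sin^2(\pi\theta)$. We will use $|1-w|^2=1-2\operatorname{Re}w+|w|^2$. The factors with $m\le 1/\rho$ and $\operatorname{Re}w_\ell\ge c\,s^m\sin^2(\pi\theta)$ should give
\[
|1-w_\ell|\le \exp\bigl(-c' s^m\sin^2(\pi\theta)\bigr).
\]
This needs $|w_\ell|^2\le\operatorname{Re}w_\ell$. That holds because $|w_\ell|\lesssim s^m\le s$, and since $\operatorname{Re}w_\ell\gtrsim s^m\sin^2$ while $|w_\ell|^2\lesssim s^{2m}\sin^4$, we need $s^m\sin^2(\pi\theta)$ to be small.

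\emph{Main obstacle.} The real difficulty is that $\omega_1(s)=4s/(1+s)^2$ is close to $1$ when $s\to1$, so $1-w$ can be near $0$ and the quadratic-term argument fails. However, $|1-w|\le 1$ whenever $\operatorname{Re}w\ge|w|^2/2$. I would therefore prove directly that each factor satisfies
\[
|1-\omega_\ell(z)x|\le 1-c\,\omega_\ell(s)x \quad\text{for } x=\sin^2(\pi\theta)\in[0,1].
\]
The point is that $\omega_\ell(z)$ lies in a small angular sector around the real segment $[0,1]$, with half-angle of order $m|\phi|+s^m m|\phi|$. On that sector the map $w\mapsto|1-wx|$ is controlled by the real part, since $|1-w|\le 1-\operatorname{Re}w+O(|\operatorname{Im}w|^2)$ when $|w|\le 1$, and we have $|\operatorname{Im}\omega_\ell(z)|\lesssim m|\phi|\,\omega_\ell(s)$. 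This requires care near $\omega\approx1$. Because $\operatorname{Im}$ is quadratically small, it should be absorbed provided $(m\phi)^2\lesssim1$, which holds for $m\le1/\rho$ with small $c_0$.

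\emph{Step 3: assembling the bound.} The remaining factors with $m>1/\rho$ satisfy $|w_\ell|\lesssim s^m$. They should be bounded using only $|1-w_\ell|\le 1+|w_\ell|$, at total cost $\exp\bigl(C\sum_{m>1/\rho}s^m\bigr)$. This loss is hard to absorb, so I would instead include these factors too: for $m>1/\rho$ we have $s^m\le e^{-1}$, hence $|z^m|\le e^{-1}$, and the same sector argument still works as long as $\arg(1+z^m)$ stays small. That is true since $|z^m|$ is bounded away from $1$, and it yields $\operatorname{Re}\omega_\ell(z)\ge-$small. A cleaner route is to split with a constant threshold.

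Finally, multiplying the factors gives
\[
\prod_\ell |1-w_\ell|\le\exp\Bigl(-c\sin^2(\pi\theta)\sum_{m\le1/\rho}s^m\Bigr),
\]
and $\sum_{m\le 1/\rho,\ m\text{ odd}}s^m\gtrsim s/\rho$, which is the claimed estimate \eqref{eq:euclidean-Stolz-decay}.
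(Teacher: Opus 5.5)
Your Step 1 is correct and matches the paper's proof of \eqref{eq: sum omega ell bound}. Your overall plan for \eqref{eq:euclidean-Stolz-decay} is also the paper's: compare $\omega_\ell(z)$ with $\alpha_\ell=\omega_\ell(|z|)$, gain on the factors with small $m=2\ell-1$, and show the rest is negligible. There is, however, a genuine gap in Step 3.

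\emph{The tail factors.} You keep the factors with $m>1/\rho$ at most $1$ by arguing that $\arg(1+z^m)$ is small. That is not the relevant angle: $\arg\omega_\ell(z)=m\phi-2\arg(1+z^m)$, and $m|\phi|$ can be as large as $c_0m\rho$, which is unbounded in $m$. For odd $m$ with $m\phi=\pi$ one gets $\omega_\ell(z)=-4s^m/(1-s^m)^2<0$, so $|1-x\omega_\ell(z)|>1$ for $x=\sin^2(\pi\theta)>0$. Hence only the crude bound $\exp\bigl(Cx\sum_{m>1/\rho}s^m\bigr)$ is available for the tail. With your threshold $1/\rho$, this is of the same order $x/\rho$ as the gain $\exp\bigl(-cx\sum_{m\le1/\rho}s^m\bigr)$: as $s\to1$ the tail sum is roughly $e^{-1}$ times the full sum, while $c$ is a small constant. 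So your final display does not follow.

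The missing idea is the order in which the constants are chosen. First fix a large absolute $B$ and split at $m\rho\le B$. The tail is then $O\bigl(x\sum_{m>B/\rho}s^m\bigr)=O(e^{-B}xs/\rho)$ and is absorbed. Only afterwards choose $c_0$ small in terms of $B$, so that $m|\phi|\le c_0B$ stays small on the whole main range. Your phrase ``split with a constant threshold'' points this way, but the threshold must be large and $c_0$ must be chosen after it.

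\emph{The ``main obstacle''.} The difficulty near $\omega\approx1$ is not real, and the tool you propose for it is false. The inequality $|1-w|\le1-\operatorname{Re}w+O(|\operatorname{Im}w|^2)$ fails for $w=e^{i\beta}$ with $\beta$ small: the left side is $\approx|\beta|$ and the right side is $O(\beta^2)$. Your Step 2 bounds $\operatorname{Re}\omega_\ell\ge\frac12\alpha_\ell$ and $|\omega_\ell|\lesssim s^m$ are too lossy. What you need is first-order relative closeness of $\omega_\ell(z)$ to $\alpha_\ell$. Writing $z^m=ye(\varphi)$, one has
\[
\frac{\omega_\ell(z)}{\alpha_\ell}-1=\frac{(e(\varphi)-1)(1-y^2e(\varphi))}{(1+ye(\varphi))^2}=O(c_0B)
\]
on the main range, which gives $\operatorname{Re}\omega_\ell\ge\frac9{10}\alpha_\ell$ and $|\omega_\ell|\le\frac{11}{10}\alpha_\ell$. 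Since $\alpha_\ell\le1$ and $x\le1$, the plain quadratic expansion then works even when $x\alpha_\ell$ is close to $1$:
\[
|1-x\omega_\ell|^2\le1-\tfrac95x\alpha_\ell+\tfrac{121}{100}x\alpha_\ell\le1-\tfrac12x\alpha_\ell .
\]
Summing logarithms gives $-\frac14x\sum_{m\le B/\rho}\alpha_\ell+O(e^{-B}xs/\rho)\le -cxs/\rho$ once $B$ is large.
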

\begin{proof}
Let us introduce a few parameters
\begin{equation*}
    q=|z|,
    \qquad
    \delta=1-q,
    \qquad
    s_\theta=\sin^2(\pi \theta), \qquad \alpha_\ell
    =
    \frac{4q^{2\ell-1}}{(1+q^{2\ell-1})^2}.
\end{equation*}
Note that we have
\begin{equation}
    \alpha_\ell \asymp q^{2\ell-1}, \qquad
    \sum_{\ell\geq1}\alpha_\ell \asymp
    \frac{q}{1-q}.
    \label{eq:alpha-total}
\end{equation}
Fix a large absolute constant $B \ge 1$, to be specified in the course of the proof. For $m=2\ell-1$, write 
\begin{equation*}
    y=q^m,
    \qquad
    \varphi=\frac{m\arg z}{2 \pi}.
\end{equation*}
We will split the factors of product depending on whether \(m\delta\leq B\) or \(m\delta>B\).

First, suppose that $m \delta \le B$. By our assumption on $z$ we have
\begin{equation*}
    |\varphi|=\frac{m|\arg(z)|}{2 \pi} \le c_0 m \delta/(2\pi) \le c_0B/(2\pi).
\end{equation*}
Moreover a short calculation shows that 
\begin{equation*}
    \frac{\omega_\ell(z)}{\alpha_\ell}-1=
    \frac{( e(\varphi)-1)(1-y^2e(\varphi))}{(1+ye(\varphi))^2}.
\end{equation*}
Taking $c_0$ small enough in terms of $B$, we may assume that
$\operatorname{Re}e(\varphi)\geq1/2$. Thus, 
\[
 |1+ye(\varphi)|\geq
 \operatorname{Re}(1+ye(\varphi))\geq1,
\]
and consequently
\begin{equation*}
    \Big|\frac{\omega_\ell(z)}{\alpha_\ell}-1 \Big| \lesssim |e(\varphi)-1|\lesssim |\varphi| \lesssim c_0B
\end{equation*}
uniformly in $y \in (0,1)$. In particular for small enough $c_0$ we obtain
\begin{equation} \label{eq: omega ell small m bound}
    \operatorname{Re}\omega_\ell(z)
    \geq
    \frac{9}{10}\alpha_\ell,
    \qquad
    |\omega_\ell(z)|
    \leq
    \frac{11}{10}\alpha_\ell
\end{equation}
and hence
\begin{align*}
    |1-s_\theta\omega_\ell(z)|^2
    &=
    1
    -2s_\theta\operatorname{Re}\omega_\ell(z)
    +s_\theta^2|\omega_\ell(z)|^2
    \\
    &\leq
    1
    -\frac95 s_\theta\alpha_\ell
    +\frac{121}{100}s_\theta\alpha_\ell
    \\
    &\leq
    1-\frac12s_\theta\alpha_\ell.
\end{align*}
Therefore
\begin{equation}
    \log|1-s_\theta\omega_\ell(z)|
    \leq
    -\frac14s_\theta\alpha_\ell
    \label{eq:main-factor}
\end{equation}
holds whenever $m\delta\leq B$.

Consider now $m\delta \ge B$. Note that for any $m=2\ell-1$ satisfying this condition we have
\[
q^m=e^{m \log q} \le e^{-m(1-q)}=e^{-m\delta} \le e^{-B}, 
\]
in particular 
\begin{equation} \label{eq: omega ell bound for m big}
    |\omega_{\ell}(z)| \lesssim q^{2\ell-1} \lesssim e^{-B}.
\end{equation}
Hence, if we choose $B$ large enough, then the left-hand side is smaller than $1/2$. Note that $B$ is chosen here
with respect to universal constants in \eqref{eq: omega ell bound for m big}. This affects the choice of $c_0$ in the case $m\delta \le B,$ however, all these constants remain universal.

At this point \eqref{eq: sum omega ell bound} easily follows from \eqref{eq:alpha-total}, \eqref{eq: omega ell small m bound}, and \eqref{eq: omega ell bound for m big}.

It remains to prove \eqref{eq:euclidean-Stolz-decay}. To establish this we will estimate the logarithm of the absolute value of the product from 
\eqref{eq:product for H quotient}. Namely, using \eqref{eq:main-factor} and \eqref{eq: omega ell bound for m big} (recall that there $|\omega_l(z)|<1/2$), followed by \eqref{eq:alpha-total} we have
\begin{align*}
 \sum_{\ell=1}^\infty \log|1-s_\theta\omega_\ell(z)|
 &= \sum_{{\ell}\le B(2\delta)^{-1}+{\frac12}}
 \log|1-s_\theta\omega_\ell(z)|
 + \sum_{{\ell}>B(2\delta)^{-1}+{\frac12}}
 \log|1-s_\theta\omega_\ell(z)| \\
 &\le -\frac14
 \sum_{{\ell}\le B(2\delta)^{-1}+{\frac12}}
 s_\theta\alpha_\ell
 +O\left(
 \sum_{{\ell}>B(2\delta)^{-1}+{\frac12}}
 q^{2{\ell}-1}s_\theta
 \right) \\
 &\le -\frac14
 \sum_{{\ell}=1}^\infty s_\theta\alpha_\ell
 +O\left(
 \sum_{{\ell}>B(2\delta)^{-1}+{\frac12}}
 q^{2{\ell}-1}s_\theta
 \right) \\
 &\le -{c'}\frac{q}{1-q}s_\theta
 +O\left(e^{-B/2}\frac{q}{1-q}s_\theta\right).
\end{align*}
In particular if $B$ is properly chosen, then we get 
\[
 \sum_{\ell=1}^\infty \log|1- s_\theta \omega_\ell(z)| \le -{c} \frac{q}{1-q}s_\theta.
\]

which, after taking exponentials, concludes the proof. 
\end{proof}
Using Lemma \ref{lemma: bound for quotient of H(z)} we now derive a bound for derivatives of products of quotients of theta series.
\begin{corollary} \label{corllary:euclidean-product-derivatives}
    Let $r \in (0,1), \ \xi \in \T^d$ and define 
    \begin{equation} \label{eq: definition of U_r(xi)}
        U_r(\xi)= \frac{r}{\rho} \sum_{j=1}^d \sin^2(\pi \xi_j).
    \end{equation}
    Take $t \in [-c_1\rho, c_1\rho]$, where $c_1$ is an absolute constant. Then, there exists a universal constant $c>0$ such that for every integer $m\ge 0$ we have 
    \begin{equation*} 
    %\label{eq:euclidean-product-derivatives}
        \rho^m\left|
 \partial_t^m
 \prod_{j=1}^d\frac{H_{\xi_j}(re(t))}{H_0(re(t))}
 \right|
 \lesssim_m(1+U_r(\xi))^m e^{-cU_r(\xi)}.
    \end{equation*}
    In particular the left-hand side is bounded by a constant depending only on $m$.
\end{corollary}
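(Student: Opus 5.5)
The plan is to use Cauchy estimates on a disk in the complex $t$-variable, with Lemma~\ref{lemma: bound for quotient of H(z)} supplying the size bound. Set
\[
F(t)=\prod_{j=1}^d\frac{H_{\xi_j}(re(t))}{H_0(re(t))},
\]
and regard $F$ as a holomorphic function of complex $t$ with $|\operatorname{Im} t|$ small; then $z=re(t)$ has modulus $re^{-2\pi\operatorname{Im}t}$. Fix a small absolute constant $c_3>0$ and a radius $\sigma\le c_3\rho$, to be chosen below in terms of $U_r(\xi)$. For real $t_0\in[-c_1\rho,c_1\rho]$ and complex $t$ with $|t-t_0|\le\sigma$, we have $|z|=r e^{O(\rho)}$. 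Taking $c_1,c_3$ small, this gives $1-|z|\asymp\rho$, $|z|\asymp r$ and $|\arg z|\le 2\pi(c_1+c_3)\rho\le c_0(1-|z|)$. Strictly, when $r$ is close to $1$ we must also ensure $|z|<1$, which holds since $\sigma\le c_3\rho$ with $c_3$ small. Hence $z$ lies in the Stolz region of Lemma~\ref{lemma: bound for quotient of H(z)}, and \eqref{eq:euclidean-Stolz-decay} applied to each factor gives
\[
|F(t)|\le\exp\Bigl(-c\frac{|z|}{1-|z|}\sum_j\sin^2(\pi\xi_j)\Bigr)\le e^{-c'U_r(\xi)}.
\]

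This bound alone yields only $\rho^m|F^{(m)}(t_0)|\lesssim_m e^{-c'U}$ with $\sigma\asymp\rho$, and it loses nothing when $U\lesssim1$. For large $U$ the plan is to get the factor $(1+U)^m$ as the price of a smaller disk, on which one compares $F(t)$ with $F(t_0)$ rather than with $1$. Write $F(t)=F(t_0)\exp(G(t)-G(t_0))$ with
\[
G=\sum_j\log\frac{H_{\xi_j}}{H_0}=\sum_j\sum_\ell\log\bigl(1-\omega_\ell\sin^2(\pi\xi_j)\bigr),
\]
using the product \eqref{eq:product for H quotient}. The branch is legitimate because, as in the proof of the lemma, $\operatorname{Re}\omega_\ell>0$ for small $m$ and $|\omega_\ell|<1/2$ for large $m$, so $\operatorname{Re}(1-s\omega_\ell)>0$ in the relevant cases.

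The key estimate is
\[
|G'(t)|\lesssim\sum_j\sin^2(\pi\xi_j)\sum_\ell|\omega_\ell'(z)|\,|z|\lesssim\frac{U}{\rho}
\]
on the disk of radius $c_3\rho$. I would prove it by Cauchy estimates in $t$ applied to $\sum_\ell|\omega_\ell|\lesssim r/\rho$ from \eqref{eq: sum omega ell bound}, on a slightly larger disk. For the factors $\log(1-s\omega_\ell)$, the derivative is $s\omega_\ell'/(1-s\omega_\ell)$, and the denominator is bounded below by the observations above. Then take $\sigma=c_3\rho/(1+U)$. On $|t-t_0|\le\sigma$ this gives $|G(t)-G(t_0)|\lesssim U\sigma/\rho\lesssim 1$, hence $|F(t)|\lesssim|F(t_0)|\le e^{-c'U}$. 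Cauchy's estimate then gives
\[
|F^{(m)}(t_0)|\lesssim_m\sigma^{-m}e^{-c'U}=\Bigl(\frac{1+U}{c_3\rho}\Bigr)^me^{-c'U},
\]
which is the claim.

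The main obstacle is this derivative bound for $G$. It requires controlling $\omega_\ell'$ uniformly, including the indices with $m\delta>B$, and checking that no factor $1-s\omega_\ell$ approaches zero on the complex disk. I would handle it by repeating the two-case analysis from Lemma~\ref{lemma: bound for quotient of H(z)} on a slightly enlarged Stolz region, and then using Cauchy's estimate on $\sum_\ell|\omega_\ell|$. The final sentence of the statement follows because $(1+U)^me^{-cU}$ is bounded uniformly in $U\ge0$.
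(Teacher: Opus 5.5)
Your first paragraph already proves the statement. Cauchy's estimate on the disk of radius $\sigma=c_3\rho$, where you have shown $|F|\le e^{-c'U_r(\xi)}$, gives $\rho^m|F^{(m)}(t_0)|\lesssim_m e^{-c'U_r(\xi)}$. Since $(1+U_r(\xi))^m\ge1$, this is \emph{stronger} than the claim. The factor $(1+U)^m$ in the statement is a loss you are permitted, not something you must earn by shrinking the disk. So the function $G$, the radius $c_3\rho/(1+U)$, and what you call the main obstacle are all superfluous, and you should drop them.

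The detour is also flawed as written. The fact that $\operatorname{Re}(1-s\omega_\ell)>0$ gives no lower bound on $|1-s\omega_\ell|$. Take $\sin^2(\pi\xi_j)=1$ and $\ell=1$. Then $1-\omega_1(z)=\bigl(\frac{1-z}{1+z}\bigr)^2$, and at $t=0$ the derivative of $\log(1-\omega_1(re(t)))$ has modulus $8\pi r/(1-r^2)\asymp r/\rho$. Your termwise bound $|\omega_1'(r)|\,r=4r(1-r)/(1+r)^3\asymp r\rho$ is smaller than this by a factor $\asymp\rho^2$. The summed estimate $|G'|\lesssim U/\rho$ can be rescued by a more careful count of the indices with $(2\ell-1)\rho\lesssim1$, but this is moot.

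The paper reaches the same conclusion by a different route. It bounds each factor $F_{r,\theta}(w)=H_\theta(re(w))/H_0(re(w))$ by $e^{-cu_{r,\theta}}$ on the complex disk of radius $c_1\rho$ via Lemma~\ref{lemma: bound for quotient of H(z)}, as you do. It then applies Cauchy's estimate one factor at a time. It adds the bound $|F_{r,\theta}-1|\lesssim u_{r,\theta}$ for $u_{r,\theta}\le1$, which follows from \eqref{eq:product for H quotient} and \eqref{eq: sum omega ell bound}. This gives $\rho^m|\partial_t^mF_{r,\theta}|\lesssim_m u_{r,\theta}e^{-cu_{r,\theta}}$ for $m\ge1$. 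Finally it combines the factors by the Leibniz rule, using $\sum_{|S|=s}\prod_{j\in S}u_j\le U^s/s!$; this is where $(1+U)^m$ comes from.

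Applying Cauchy's estimate to the whole product at once, as you do, is shorter and yields the sharper bound $\lesssim_m e^{-cU}$. That suffices for every later use of the corollary, since only boundedness and rapid decay in $U$ are needed there. The factor-by-factor route additionally shows that for $m\ge1$ the derivatives are $O(U)$ as $U\to0$, consistent with the product being identically $1$ at $\xi=0$. The statement as written does not record this.
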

\begin{proof}
   For $\theta \in \T$ denote \(u_{r,\theta}=(r/\rho)\sin^2(\pi\theta)\), moreover for \(w\in\mathbb C\) such
that \(re^{-2 \pi\operatorname{Im}w}<1\) define
\[
 F_{r,\theta}(w)=\frac{H_\theta(re(w))}{H_0(re(w))}.
\]
Consider any $w \in \C$ such that $|w-t| \le c_1 \rho$.
If $c_1>0$ is sufficiently small, then, for
$z=r\,e(w) $ we obtain
\[
 |z| = r+O(c_1 \rho), \qquad
 |\arg z|\leq 2\pi|t|+2\pi|\operatorname{Re}(w-t)|
 \leq 4\pi c_1\rho\leq c_0(1-|z|),
\]
where $c_0$ is the constant from Lemma \ref{lemma: bound for quotient of H(z)}.
Thus, applying this lemma we get
\begin{equation*}
 |F_{r,\theta}(w)|\leq e^{-cu_{r,\theta}}.
\end{equation*}
Consequently, using Cauchy's estimate on the complex disk of radius \(c_1\rho\) around $t,$ we get
\begin{equation}
\label{eq:euclidean-complex-F-decay}
 \rho^m|\partial_t^mF_{r,\theta}(t)|
 \lesssim_m e^{-cu_{r,\theta}},\qquad m\ge 0.
\end{equation}

If $u_{r,\theta}\leq1$, then \eqref{eq:product for H quotient} and \eqref{eq: sum omega ell bound} give
\[
 |F_{r,\theta}(w)-1|\le \prod_{\ell=1}^\infty\big(1+\sin^2(\pi \theta) |\omega_l(z) |\big)-1
 \leq\exp\left(\sin^2(\pi \theta)\sum_\ell|\omega_\ell(z)|\right)-1
 \lesssim u_{r,\theta}.
\]
Hence, if $u_{r, \theta} \le 1$, using again Cauchy's estimate we get
\[
 \rho^m|\partial_t^mF_{r,\theta}(t)|
 \lesssim_m u_{r,\theta},
\]
but this time for every $m\ge 1.$

  After potentially increasing the implied constants, combining the above estimates, we obtain
\begin{equation}\label{eq:euclidean-one-coordinate-derivative}
 \rho^m|\partial_t^mF_{r,\theta}(t)|
 \lesssim_m u_{r,\theta}e^{-c u_{r,\theta}},\qquad m\ge 1.
\end{equation}

Recall that $U_r(\xi)$ was defined by \eqref{eq: definition of U_r(xi)} and set $u_j= \frac{r}{\rho}\sin^2(\pi\xi_j)$, so that
$U_r(\xi)=\sum_j u_j$.  Using the Leibniz formula we obtain
\[
 \partial_t^m\prod_{j=1}^dF_{r,\xi_j}(t)
 =\sum_{m_1+\cdots+m_d=m}
 \frac{m!}{m_1!\cdots m_d!}
 \prod_{j=1}^d\partial_t^{m_j}F_{r,\xi_j}(t).
\]
For each multi-index \((m_1,\ldots,m_d)\) in the sum above, denote
\(S=S(m)=\{j\in[d]:m_j\geq1\}\).  Equations \eqref{eq:euclidean-complex-F-decay} and
\eqref{eq:euclidean-one-coordinate-derivative} bound the corresponding term by
\[
 C_m\,\rho^{-m}e^{-cU_r(\xi)}\prod_{j\in S}u_j.
\]
 Note that, for fixed \(m\), the number of positive choices of the \(m_j\)'s corresponding to a fixed set \(S\) of size \(s\le m\) is \(\binom{m-1}{s-1}\). In particular, it depends only on \(s\) and \(m\). This way we finally get
\begin{align*}
    \rho^m\left|
 \partial_t^m
 \prod_{j=1}^d\frac{H_{\xi_j}(re(t))}{H_0(re(t))}
 \right|
 &\lesssim_m e^{-cU_r(\xi)} \sum_{s \le m} \sum_{\substack{S\subseteq [d], \\ |S|=s}} \prod_{j\in S}u_j \\
 & \le e^{-cU_r(\xi)} \sum_{s \le m} \frac{\big(\sum_{j\in [d]} u_j \big)^s}{s!} \\
 &\lesssim_m (1+U_r(\xi))^m e^{-c U_r(\xi)},
\end{align*}
which concludes the proof.
\end{proof}

\subsection{ Gaussian approximation of the ball multiplier - proof of Proposition \ref{prop:euclidean-Gaussian-approximation}}

We are finally ready to prove Proposition \ref{prop:euclidean-Gaussian-approximation}. For further reference recall that
\[
\mathbf{h}= (1/2,\ldots,1/2) \in \T^d.
\]
\begin{proof}
    Consider any $\eta \in \T^d$ and recall the formula \eqref{eq: Formula for multiplier} for the multiplier 
    \[
    r^n|\mathcal{B}_n^{d}| m_n^d(\eta)= \int_\T \frac{\prod_{j=1}^d H_{\eta_j}(re(t))}{1-re(t)} e(-nt) \dd t.
    \]
Note that the identity \(k^2\equiv k\pmod 2\) gives
\begin{equation}\label{eq:euclidean-parity}
 H_{\theta+1/2}(z)=H_\theta(-z).
\end{equation}
For \(\varepsilon\in\{-1,1\}\) we set
\begin{equation*}
%\label{eq:euclidean-R}
 R_{\varepsilon,\eta}(t)
 =
 \frac{1-re(t)}{1-\varepsilon re(t)}
 \prod_{j=1}^d
 \frac{H_{\eta_j}(re(t))}{H_0(re(t))}.
\end{equation*}
Hence, using \eqref{eq:euclidean-parity}, \eqref{eq: Formula for multiplier}, and recalling the definitions \eqref{eq: defn of I_{d,n} } of $I_{d,n}$ and \eqref{eq:euclidean-Phi} of $\Phi_{d,n},$ we get
\begin{equation}
\label{eq:euclidean-Cauchy-quotient}
\begin{split}
 \varepsilon^n m_n^d\left(\eta+\frac{1-\varepsilon}{2}\mathbf h\right)
 &=
 \frac{\displaystyle
  \varepsilon^n\int_{-1/2}^{1/2}
 \frac{\prod_{j=1}^d H_{\eta_j+(1-\varepsilon)/4}(re(t))}{1-re(t)} e(-nt)\dd t}
 {r^n|\mathcal{B}_{n}^d|} \\
 &=\frac{\displaystyle
  \varepsilon^n\int_{-1/2}^{1/2}
 \frac{\prod_{j=1}^d H_{\eta_j}(\varepsilon re(t))}{1-re(t)} e(-nt)\dd t}
 {I_{d,n} \Theta(r)^d \rho^{-1}} \\
 &= \frac{\displaystyle
  \int_{-1/2}^{1/2}
 \Phi_{d,n}(t) R_{\varepsilon,\eta}(t) \dd t}
 {I_{d,n}}.
\end{split}
\end{equation}
In the passage from the second to the third line above we changed variable $t \mapsto t+\frac{1-\varepsilon}{4}$. 

In our analysis we shall need the quantity
\[
 \Lambda(\eta)
 =
 \sum_{j=1}^d\sin^2(\pi\eta_j),
\]
which has already played an important role in \cite{BMSWballs}, \cite{NiksinskiWrobel}.
For further reference note that 
\begin{equation*}
%\label{eq:euclidean-Lambda-shift}
 \Lambda(\eta-\mathbf h)
 =\sum_{j=1}^d \cos^2(\pi\eta_j)=
 d-\Lambda(\eta).
\end{equation*}
Thus for every \(\eta \in \T^d\), for at least one of the variables \(\eta\) or
\(\eta-\mathbf h\) the corresponding value of \(\Lambda\) does not exceed $d/2$.  

We now move towards analyzing the multiplier $m_n^d$ for which we shall use the representation
\eqref{eq:euclidean-Cauchy-quotient}.
Choose $c_{\mathrm{loc}}>0$ a sufficiently small absolute constant.
For \(|t|\leq 2c_{\mathrm{loc}}\rho\), the factor
\[
 \frac{1-re(t)}{1-\varepsilon re(t)}.
\]
equals \(1\) when \(\varepsilon=1\), while for \(\varepsilon=-1\)
it is uniformly bounded together with all of its derivatives.  Hence
Corollary \ref{corllary:euclidean-product-derivatives} together with Leibniz rule gives, for every \(m\geq0\),
\begin{equation*}
%\label{eq:euclidean-R-derivatives}
 |\partial_t^mR_{\varepsilon,\eta}(t)|
 \leq
 C_m\rho^{-m}
 (1+U_r(\eta))^m e^{-c_mU_r(\eta)}.
\end{equation*}

We also need a tail estimate. Namely, we shall show that if \(\Lambda(\eta)\leq d/2\), then
\begin{equation}\label{eq:euclidean-numerator-tail}
 |\Phi_{d,n}(t)R_{\varepsilon,\eta}(t)|
 \leq e^{-c\kappa},
 \qquad
\text{for} \  c_{\mathrm{loc}}\rho\leq |t|\leq 1/2.
\end{equation}
Indeed, from definitions we have
\[
 \Phi_{d,n}(t)R_{\varepsilon,\eta}(t)
 =
 e(-nt)\frac{\rho}{1-\varepsilon re(t)}
 \prod_{j=1}^d
 \frac{H_{\eta_j}(re(t))}{H_0(r)},
\]
where
\[
 \left|\frac{\rho}{1-\varepsilon re(t)}\right|\leq1.
\]
Since
\(\Lambda(\eta)\leq d/2\), the set
\[
 J:=\{j:\sin^2(\pi\eta_j)\leq3/4\}
\]
has at least \(d/3\) elements. We will now use Corollary \ref{corollary: psi decay}, thus we recall the definition of $\Delta_r$ used there. Note that  for \(j\in J\) we have $\|\eta_j -1/2\|_{\T} \gtrsim 1$ while $\|t \|_{\T} \ge c\rho$, which implies that for such $j$'s we have
\[
\Delta_r(t,\eta_j) \ge c'.
\]
Hence Corollary \ref{corollary: psi decay} gives us
\[
 \prod_{j=1}^d
 |\psi_r(t,\eta_j)|
 \leq\prod_{j\in J}
 |\psi_r(t,\eta_j)| \le e^{-c''rd}.
\]
Since \(rd\asymp\kappa\), this proves
\eqref{eq:euclidean-numerator-tail}.

Let now $\chi=\one_{[-c_{\mathrm{loc}},c_{\mathrm{loc}}]}$ and set
\[
 \chi_\rho(t)=\chi(t/\rho).
\]
For \(0\leq k\leq2N-1\) we define
\[
 b_{k,d,n}
 =
 \frac1{k!\rho^kI_{d,n}}
 \int_{-1/2}^{1/2}
 \chi_\rho(t)t^k\Phi_{d,n}(t)\dd t.
\]
By Lemma \ref{lemma:euclidean-coefficient-estimate} and
\eqref{eq:euclidean-Phi-local-decay},
\begin{align}
 |b_{k,d,n}|
 &\lesssim
 \frac{\sqrt V}{\rho^k}
 \int_{\mathbb R}|t|^ke^{-cVt^2}\dd t \notag\\
 &\lesssim_k (\rho^2V)^{-k/2}
 \asymp\kappa^{-k/2}.
 \label{eq:euclidean-b-bound}
\end{align}
Next we set
\begin{equation}
     q_{k,r}^{\varepsilon}(\eta)
 =
 \rho^k
 \left.
 \frac{d^k}{d t ^k}R_{\varepsilon,\eta}(t)
 \right|_{t=0}. \label{eq:definition of q_{k,r}}
\end{equation}
Now we shall show that
\begin{equation}\label{eq:euclidean-one-branch-expansion}
 \left|
 \varepsilon^n
 m_n^d\left(\eta+\frac{1-\varepsilon}{2}\mathbf h\right)
 -
 \sum_{k=0}^{2N-1}
 b_{k,d,n}q_{k,r}^{\varepsilon}(\eta)
 \right|
 \lesssim_N \kappa^{-N}
\end{equation}
whenever \(\Lambda(\eta)\leq d/2\).
Indeed, applying Taylor's theorem around \(t=0\) to the expression in
\eqref{eq:euclidean-Cauchy-quotient}, we obtain
\begin{align*} &\left|
 \varepsilon^n
 m_n^d\left(\eta+\frac{1-\varepsilon}{2}\mathbf h\right)
 -
 \sum_{k=0}^{2N-1}
 b_{k,d,n}q_{k,r}^{\varepsilon}(\eta)
 \right| \\
 &\le\frac1{I_{d,n}}\int_{-1/2}^{1/2}\chi_\rho(t)|\Phi_{d,n}(t)|
 \left|R_{\varepsilon,\eta}(t)
 -\sum_{k=0}^{2N-1}\frac{t^k}{k!}R_{\varepsilon,\eta}^{(k)}(0)\right|\dd t\\
 &\quad+\frac1{I_{d,n}}\int_{-1/2}^{1/2}(1-\chi_\rho(t))
 |\Phi_{d,n}(t)||R_{\varepsilon,\eta}(t)|\dd t \\
 &\lesssim_N \sqrt{V} \int_{\R} |t|^{2N} e^{-cVt^2} \rho^{-2N} (1+U_r(\eta))^{2N} e^{-cU_r(\eta)} \dd t + \sqrt{V} e^{-c \kappa} \\
 &\lesssim_N V^{-N} \rho^{-2N}+ \sqrt{V} e^{-c \kappa} \lesssim_N \kappa^{-N},
\end{align*}
In the above chain of inequalities, estimating the first integral we used Taylor's theorem together with
Corollary~\ref{corllary:euclidean-product-derivatives} and
\eqref{eq:euclidean-Phi-local-decay}, while estimating the second integral we used
tail estimate \eqref{eq:euclidean-numerator-tail}.
Finally, in the last inequality we used
\eqref{eq: bound on V in terms of power of kappa}. Hence,  \eqref{eq:euclidean-one-branch-expansion} is proved.
Note that Corollary \ref{corllary:euclidean-product-derivatives} gives
\[
 |q_{k,r}^{\varepsilon}(\eta)|
 \leq
 C_k(1+U_r(\eta))^ke^{-c_kU_r(\eta)}.
\]
Hence if \(\Lambda(\eta)\geq d/2\), then  by \eqref{eq: kappa, r , rho properties}
\[
U_r(\eta)
 \geq \frac{rd}{2\rho}
 \gtrsim\kappa.
\]
Consequently, for every \(N\geq1\),
\begin{equation}\label{eq:euclidean-q-rapid}
 |q_{k,r}^{\varepsilon}(\eta)|
 \lesssim_{k,N}
 U_r(\eta)^{-N}\lesssim_{k,N}\kappa^{-N}.
\end{equation}
Define now
\begin{equation}\label{eq:euclidean-a-approximation}
 a_{n,N}^d(\xi)
 =
 \sum_{k=0}^{2N-1}
 b_{k,d,n}
 \left(
 q_{k,r}^{+}(\xi)
 +(-1)^nq_{k,r}^{-}(\xi-\mathbf h)
 \right).
\end{equation}
For every \(\xi \in \T^d\), either
\(\Lambda(\xi)\leq d/2\) or
\(\Lambda(\xi-\mathbf h)\leq d/2\) holds.
In the first case, we apply
\eqref{eq:euclidean-one-branch-expansion} with
\(\varepsilon=1\) and \(\eta=\xi\). Note that the sum of the terms with arguments $\xi-\mathbf h$ is then $O(\kappa^{-N})$ by
\eqref{eq:euclidean-b-bound} and
\eqref{eq:euclidean-q-rapid}.  In the second case we apply
\eqref{eq:euclidean-one-branch-expansion} with
\(\varepsilon=-1\) and \(\eta=\xi-\mathbf h\). Here the sum of the terms with argument $\xi$ is
treated in an analogous way. In summary, we have justified that
\begin{equation}\label{eq:euclidean-arbitrary-order}
 \sup_{\xi\in\mathbb T^d}
 |m_n^d(\xi)-a_{n,N}^d(\xi)|
 \lesssim_N \kappa^{-N}
 \lesssim_N\min\{n,d\}^{-N}.
\end{equation}

To finish the proof, it remains to express the functions
\(q_{k,r}^{\varepsilon}\) defined in \eqref{eq:definition of q_{k,r}} as finite linear combinations of normalized
discrete Gaussian multipliers. The purpose of the next argument is to express the functions \(q_{k,r}^{\varepsilon}\) in terms of derivatives of the normalized discrete Gaussian multipliers with respect to the parameter $r$.  We recall that these multipliers are defined by
\[
 \widehat g_r^d(\eta)
 =
 \prod_{j=1}^d
 \frac{H_{\eta_j}(r)}{H_0(r)}.
\]
We introduce the parameter
\[
 s=s(r)=\log\frac r{1-r},\qquad \textrm{ so that}
 \qquad
 r=r(s)=\frac{e^s}{1+e^s},
\]
and let
\[
 \partial_s=\rho r\partial_r.
\]
Denote
\[
 \mathcal F_{\varepsilon,\eta}(z)
 =
 \frac{1-z}{1-\varepsilon z}
 \prod_{j=1}^d
 \frac{H_{\eta_j}(z)}{H_0(z)},
\]
and notice that we have
\[
 R_{\varepsilon,\eta}(t)
 =
 \mathcal F_{\varepsilon,\eta}(re(t)).
\]
Writing \(D=r\partial_r\), by the definition \eqref{eq:definition of q_{k,r}} we therefore obtain
\[
 q_{k,r}^{\varepsilon}(\eta)
 =
 (2\pi i)^k\rho^kD^k
 \left[
 a_\varepsilon(r)\widehat g_r^d(\eta)
 \right],
\]
where
\[
 a_+(r)=1,
 \qquad
 a_-(r)=\frac{1-r}{1+r}.
\]
Now, the equality \(\partial_s=\rho D\) implies the identity
\begin{equation*}
%\label{eq:euclidean-radial-recurrence}
 \rho^{k+1}D^{k+1}F
 =
 (\partial_s+kr)(\rho^kD^kF).
\end{equation*}
This way we obtain the following recurrence formula
\[
 q_{k+1,r}^{\varepsilon}
 =
 2 \pi i(\partial_s+kr)q_{k,r}^{\varepsilon}.
\]
Starting with
\[
 q_{0,r}^{\varepsilon}
 =
 a_\varepsilon(r)\widehat g_r^d,
\]
a simple induction gives
\begin{equation}\label{eq:euclidean-q-s-derivatives}
 q_{k,r}^{\varepsilon}(\eta)
 =
 \sum_{j=0}^k
 c_{k,j}^{\varepsilon}(r)
 \partial_s^j
 \widehat g_{r(s)}^d(\eta),
 \qquad
 |c_{k,j}^{\varepsilon}(r)|\lesssim_k 1.
\end{equation}
Moreover we have a recurrence for the coefficient $c_{k,j}^\varepsilon$ given by
\[
 c_{k+1,j}^{\varepsilon}
 =2 \pi i\left(\partial_sc_{k,j}^{\varepsilon}
 +kr c_{k,j}^{\varepsilon}+c_{k,j-1}^{\varepsilon}\right),
\]
where $c_{k,j}^\varepsilon=0$ for $j \not \in \{0, \ldots, k\}$. A straightforward calculation shows that all \(s\)-derivatives of \(r(s)\) and \(a_-(r(s))\) are bounded. Therefore, an induction
on \(k\), performed simultaneously for every \(m\geq0\) gives
\[
 \sup_{s\in\R}
 |\partial_s^m c_{k,j}^{\varepsilon}(r(s))|\lesssim_{k,m} 1.
\]
Taking \(m=0\) proves the coefficient estimate in
\eqref{eq:euclidean-q-s-derivatives}.
Since
\[
 \partial_s(\rho^jD^jF)
 =-jr\rho^jD^jF+\rho^{j+1}D^{j+1}F,
\]
and \(\partial_s=\rho D\) preserves polynomials in \(r\),
another straightforward induction gives, for $m\geq1$,
\[
 \partial_s^m=\sum_{j=1}^m\lambda_{m,j}(r)\rho^jD^j,
\]
where the coefficients $\lambda_{m,j}$ are polynomials in \(r\) and hence are uniformly bounded for $0<r<1$.  Moreover,
\[
 \rho^jD^j\widehat g_r^d(\eta)
 =(2 \pi i)^{-j}\rho^j\left.\partial_t^j
 \prod_{\nu=1}^d\frac{H_{\eta_\nu}(re(t))}{H_0(re(t))}
 \right|_{t=0},
\]
thus
taking $t=0$ in
Corollary \ref{corllary:euclidean-product-derivatives} gives
\begin{equation}
\sup_{d\geq1}\sup_{s\in\R}\sup_{\eta\in\mathbb T^d}
 |\partial_s^m\widehat g_{r(s)}^d(\eta)|\lesssim_m 1 \label{eq:euclidean-Gaussian-s-derivatives},
\end{equation}
for every $m\geq1$.

The remaining part is to replace these \(s\)-derivatives in \eqref{eq:euclidean-q-s-derivatives} by finite differences. Before proceeding further we will need an  elementary observation.
Let $j\geq0$, $M\geq1$, $h>0,$  and let $I$ be an open interval
containing $[s,s+(j+M-1)h]$. We claim that there are constants
$\gamma_{\ell,j,M}$, $0\leq\ell\leq j+M-1$, depending only on $j,M$,
such that every function $F\in C^{j+M}(I)$, satisfies
\begin{equation}\label{eq:euclidean-finite-difference}
 \begin{aligned}
 F^{(j)}(s)
 &=h^{-j}\sum_{\ell=0}^{j+M-1}\gamma_{\ell,j,M}F(s+\ell h)\\
 &\quad+O_{j,M}\left(
 h^M\sup_{s\leq u\leq s+(j+M-1)h}|F^{(j+M)}(u)|
 \right).
 \end{aligned}
\end{equation}
Indeed, the coefficients are uniquely determined by the following system of linear equations
\[
 \sum_{\ell=0}^{j+M-1}\gamma_{\ell,j,M}\ell^q
 =
 \begin{cases}
  j!,&q=j,\\
  0,&0\leq q\leq j+M-1,\ q\ne j.
 \end{cases}.
\]
The matrix corresponding to this system is a Vandermonde matrix, and hence its determinant is given by
\[
 \prod_{0\leq a<b\leq j+M-1}(b-a)\ne0,
\]
so such coefficients exist. Formula \eqref{eq:euclidean-finite-difference} now follows by applying Taylor's theorem with remainder to each \(F(s+\ell h)\), multiplying the resulting identity by \(h^{-j}\gamma_{\ell,j,M}\), and summing over \(\ell\).

At this point we have established all the ingredients needed to complete the proof of Proposition \ref{prop:euclidean-Gaussian-approximation}. Take
\[
 h=\kappa^{-1/2},
 \qquad
 M=2N.
\]
For $j\leq k$, inequality \eqref{eq:euclidean-b-bound} gives
\[
 |b_{k,d,n}|h^{-j}
 \lesssim_k \kappa^{-k/2}\kappa^{j/2}
 \lesssim_N 1.
\]
The error in
\eqref{eq:euclidean-finite-difference}, after multiplication
by $b_{k,d,n}$, is thus at most
\[
 C_{N}|b_{k,d,n}|h^{2N}
 \lesssim_N\kappa^{-N}.
\]
For $0\leq j\leq k$, we apply
\eqref{eq:euclidean-finite-difference} to
$F(u)=\widehat g_{r(u)}^d(\eta)$ and set
\[
 r_\ell=r(s+\ell h)
 =\frac{e^{s+\ell h}}{1+e^{s+\ell h}},
 \qquad 0\leq\ell\leq j+2N-1.
\]
This way due to
\eqref{eq:euclidean-Gaussian-s-derivatives}, we obtain
\[
 \partial_s^j\widehat g_{r(s)}^d(\eta)
 =
 h^{-j}
 \sum_{\ell=0}^{j+2N-1}
 \gamma_{\ell,j,2N}
 \widehat g_{r(s+\ell h)}^d(\eta)
 +
 O_{j,N}(\kappa^{-N}).
\]
After multiplication by
\(b_{k,d,n}c_{k,j}^{\varepsilon}(r)\), the coefficients in this
finite sum are bounded by \(C_{N}\), while the error is
\(O_{N}(\kappa^{-N})\). Substituting these finite-difference expansions into
\eqref{eq:euclidean-q-s-derivatives} and then into
\eqref{eq:euclidean-a-approximation}, we see that
\begin{equation}
\label{eq: atil to a}
a_{n,N}^d(\xi)=\tilde{a}_{n,N}^d(\xi)+O_{N}(\kappa^{-N}),
\end{equation}
where \(\tilde{a}_{n,N}^d\) is a finite sum of the form
\[
 \tilde{a}_{n,N}^d(\xi)
 =
 \sum_{\nu=1}^{L_N}
 \alpha_{\nu,d,n}
 \widehat g_{r_{\nu,d,n}}^d(\xi)
 +
 \sum_{\nu=1}^{L_N}
 \beta_{\nu,d,n}
 \widehat g_{\widetilde r_{\nu,d,n}}^d(\xi-\mathbf h),
\]
with \(L_N\) depending only on \(N\), and
\[
 \sum_{\nu=1}^{L_N}
 \bigl(
 |\alpha_{\nu,d,n}|
 +
 |\beta_{\nu,d,n}|
 \bigr)
 \lesssim_N 1.
\]
Finally, combining \eqref{eq: atil to a} with
\eqref{eq:euclidean-arbitrary-order} and
\(\kappa\asymp\min\{n,d\}\), we obtain
\[
 \sup_{\xi\in\mathbb T^d}
 |m_n^d(\xi)-\tilde{a}_{n,N}^d(\xi)|
 \lesssim_N\kappa^{-N}
 \lesssim_N\min\{n,d\}^{-N},
\]
concluding the proof of Proposition
\ref{prop:euclidean-Gaussian-approximation}.
\end{proof}

\section{Discrete Euclidean spheres}
\label{sec: spheres}
In this section we point out how to use the techniques from previous section to prove Proposition \ref{prop:sphere-Gaussian-approximation} and then conclude Theorem \ref{thm:main:sphere}. Since the scheme of the proof is essentially the same, we keep the discussion brief. Throughout this section, $d\geq5$ and $1\leq n\leq d^4$.

The generating function formula takes the following form
\begin{align*}
    \sum_{n=0}^\infty |\mathcal{S}_n^{d}| s_n^d(\xi) z^n&= \sum_{n=0}^\infty \sum_{\substack{x \in \Z^d,\\|x|^2 = n}}  z^n e(\xi \cdot x)\\ 
    &=  \sum_{\substack{x \in \Z^d}} e(\xi \cdot x) z^{|x|^2} =\prod_{j=1}^d H_{\xi_j}(z)
\end{align*}
and hence by orthogonality for any $r \in (0,1)$ we have 
\begin{equation*} 
%\label{eq: Formula for spherical multiplier}
   r^n |\mathcal{S}_n^{d}| s_n^d(\xi)= \int_\T \prod_{j=1}^d H_{\xi_j}(re(t)) e(-nt) \dd t,
\end{equation*}
in particular 
\begin{equation*}
   r^n |\mathcal{S}_n^{d}| = \int_\T \Theta(re(t))^d e(-nt) \dd t .
\end{equation*}
This time we define $r=r_{{\textrm{sph}},n,d}$ and $V=V_{{\textrm{sph}}}$ by the formulas
\[
\mu(r)= \frac{n}{d}, \qquad V=d r \mu'(r), \qquad \rho=1-r, \qquad \kappa=V \rho^2. 
\]
The second part of Lemma \ref{lem: basic aproximation for mu} gives us the same approximations as before
 \[
n \asymp \frac{dr}{\rho}, \qquad V \asymp \frac{dr}{\rho^2},
 \]
and
 \begin{equation*} 
     r \asymp \frac{n}{n+d}, \qquad \rho \asymp \frac{d}{n+d}, \qquad \kappa=\rho^2 V \asymp \min\{n,d \}.
 \end{equation*}
Defining
\begin{equation*}
%\label{eq:spherical-Phi}
\Phi^{\textrm{sph}}_{d,n}(t)
 =e(-nt)
 \left(\frac{H_0(re(t))}{H_0(r)}\right)^d,
\end{equation*}
and using orthogonality we have
\[
\int_{-1/2}^{1/2} \Phi^{\textrm{sph}}_{d,n}(t) \dd t =\Theta(r)^{-d} r^n |\mathcal{S}_n^d|.
\]

The required analogue of Lemma \ref{lemma:decay of Phi} is stated below. Its proof is the same, except for the fact that now there is no need to bound $|\frac{\rho}{1-re(t)}|$.
\begin{lemma} \label{lemma:decay of spher Phi}
For every constant $c_1 \in (0,1/2)$ there exists $c_2>0$ such that we have  
\begin{equation*}
 |\Phi^{\textrm{sph}}_{d,n}(t)|\leq e^{-c_2Vt^2},
 \qquad \text{for} \ |t|\leq c_1\rho.
\end{equation*}
and
\begin{equation*}
 |\Phi^{\textrm{sph}}_{d,n}(t)|\leq e^{-c_2\kappa}, \qquad \text{for} \ c_1 \rho \le |t| \le 1/2
\end{equation*}
\end{lemma}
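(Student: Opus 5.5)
The plan is to repeat the proof of Lemma~\ref{lemma:decay of Phi} essentially verbatim. The only inputs are Corollary~\ref{corollary: psi decay} and the spherical analogues of the parameter comparisons. Note first that $|e(-nt)|=1$, so
\[
|\Phi^{\mathrm{sph}}_{d,n}(t)|=|\psi_r(t,0)|^d .
\]
We therefore only need to bound the $d$-th power of the theta quotient at $\theta=0$. Unlike the ball case, there is no rational factor $\rho/(1-re(t))$ to control.

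For the local range $|t|\le c_1\rho$, we use $c_1<1/2$ to see that the minimum in $\Delta_r(t,0)$ is attained at $\varepsilon=0$, or else is at least $1/4$ from the $\theta$-term. Hence
\[
\min\{1,\Delta_r(t,0)\}\ge \min\{t^2/\rho^2,1/4\}=t^2/\rho^2 .
\]
Corollary~\ref{corollary: psi decay} then gives $|\psi_r(t,0)|^d\le \exp(-c\,dr t^2/\rho^2)$. We then invoke the spherical comparison $V=dr\mu'(r)\asymp dr/\rho^2$, which comes from the second part of Lemma~\ref{lem: basic aproximation for mu} via $r\mu'(r)\asymp r/\rho^2$. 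This converts the bound into $e^{-c_2Vt^2}$.

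For the global range $c_1\rho\le|t|\le 1/2$, the $\varepsilon=0$ term of $\Delta_r(t,0)$ is at least $c_1^2$, and the $\varepsilon=1$ term is at least $\|1/2\|_{\T}^2/\rho\ge 1/4$. So $\min\{1,\Delta_r(t,0)\}\ge\min\{c_1^2,1/4\}$. Corollary~\ref{corollary: psi decay} then yields $|\Phi^{\mathrm{sph}}_{d,n}(t)|\le e^{-c\min\{c_1^2,1/4\}dr}$. It remains to check that $dr\asymp\kappa$ in the spherical normalization. This follows from $\kappa=\rho^2V\asymp dr$, again using $V\asymp dr/\rho^2$, and gives the bound $e^{-c_2\kappa}$.

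There is no real obstacle here. The only point to verify is that the comparisons $V\asymp dr/\rho^2$ and $\kappa\asymp dr$ hold for the spherical choice of $r$, defined by $\mu(r)=n/d$. These follow directly from Lemma~\ref{lem: basic aproximation for mu}, which is valid for every $r\in(0,1)$ independently of how $r$ is chosen.
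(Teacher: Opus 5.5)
Your proposal is correct and follows the paper's approach. The paper's proof is exactly the proof of Lemma~\ref{lemma:decay of Phi} with the bound on the rational factor $\rho/(1-re(t))$ dropped: Corollary~\ref{corollary: psi decay} is applied at $\theta=0$ in both ranges, together with $V\asymp dr/\rho^2$ and $\kappa\asymp dr$ from Lemma~\ref{lem: basic aproximation for mu}.
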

We also need an analogue of Lemma \ref{lemma:euclidean-coefficient-estimate}, which we state below. It's proof is again exactly the same with same bounds except for the fact that we don't need to prove the estimate \eqref{eq:euclidean-geometric-third-derivative}.
\begin{lemma}
  For every $d\geq5$ and every integer $1\leq n\leq d^4$, we have
    \[
    \int_{-1/2}^{1/2} \Phi^{\textrm{sph}}_{d,n}(t) \dd t  \asymp V^{-1/2}.
    \]
\end{lemma}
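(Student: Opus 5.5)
The plan is to repeat the saddle-point argument of Lemma~\ref{lemma:euclidean-coefficient-estimate} verbatim, with $\Phi_{d,n}$ replaced by $\Phi^{\textrm{sph}}_{d,n}$ and with the spherical parameters $r$, $V=dr\mu'(r)$, $\kappa=\rho^2V\asymp\min\{n,d\}$. We set
\[
K^{\textrm{sph}}(t)=d\log\frac{H_0(re(t))}{H_0(r)},
\]
which is well defined on $|t|\le c_\Phi\rho$ because $H_0$ does not vanish in the disk. At $t=0$ we have $K^{\textrm{sph}}{}'(0)=2\pi i d\mu(r)=2\pi i n$ by the spherical saddle-point equation $\mu(r)=n/d$. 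Moreover $K^{\textrm{sph}}{}''(0)=-4\pi^2 dr\mu'(r)=-4\pi^2V$, since $\partial_t=2\pi i\, r\partial_r$ at $t=0$. The third-derivative bound \eqref{eq:euclidean-H_0 third derivative bound} (via Jacobi's imaginary transformation when $r$ is close to $1$, trivially otherwise) gives $|K^{\textrm{sph}}{}'''(t)|\lesssim dr/\rho^3\asymp V/\rho$. Hence
\[
\log\Phi^{\textrm{sph}}_{d,n}(t)=-2\pi^2Vt^2+O\bigl((V/\rho)|t|^3\bigr),\qquad |t|\le c_\Phi\rho.
\]
The geometric factor is absent here, so \eqref{eq:euclidean-geometric-third-derivative} is not needed.

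Next we rescale $t=v/\sqrt V$. For $|v|\le B$ and $\kappa$ large in terms of $B$, this gives $\Phi^{\textrm{sph}}_{d,n}(v/\sqrt V)=e^{-2\pi^2v^2}(1+O_B(\kappa^{-1/2}))$. We then split $\sqrt V\int\Phi^{\textrm{sph}}_{d,n}$ into the ranges $|v|\le B$, $B\le|v|\le c_\Phi\rho\sqrt V$, and the rest. The middle range is controlled by the local Gaussian decay in Lemma~\ref{lemma:decay of spher Phi}, giving $e^{-cB^2}$. The outer range is controlled by the global decay, giving $\sqrt V e^{-c\kappa}\lesssim\kappa^{7/2}e^{-c\kappa}$; here $V\lesssim\kappa^7$ because $n\le d^4$ still yields $\rho^{-1}\lesssim\kappa^3$. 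This shows $\sqrt V\int\Phi^{\textrm{sph}}_{d,n}=(2\pi)^{-1/2}+o(1)$ once $B$ and $\kappa$ are large.

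The step I expect to need the most care is the bounded-$\kappa$ case, because positivity must be checked directly. The earlier ball case used $\mathcal B_n^d\neq\emptyset$ trivially, and for spheres we must make sure $|\mathcal S_n^d|>0$ with the right size.
\begin{itemize}
\item When $d\lesssim_B1$ (but $d\ge5$), only finitely many pairs $(n,d)$ occur. Each integral equals $\Theta(r)^{-d}r^n|\mathcal S_n^d|$, which is positive since every $n$ is a sum of $d\ge4$ squares (Lagrange). So the constants can be adjusted.
\item When $d$ is large and $n\lesssim_B1$, we have $r\asymp_B d^{-1}$, $\rho\asymp1$ and $V\asymp_B1$. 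The counting bounds become $2^n\binom dn\le|\mathcal S_n^d|\le\sum_{j\le n}\binom dj(2\lfloor\sqrt n\rfloor+1)^j\lesssim_B d^n$, using vectors with $n$ entries $\pm1$ for the lower bound. Together with $\Theta(r)^d\asymp_B1$ this gives $\Theta(r)^{-d}r^n|\mathcal S_n^d|\asymp_B1\asymp_B V^{-1/2}$.
\end{itemize}
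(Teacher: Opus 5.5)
Your proposal is correct and follows the paper's own route: the paper proves this lemma by repeating the saddle-point argument of Lemma~\ref{lemma:euclidean-coefficient-estimate} verbatim, omitting only the geometric-factor bound \eqref{eq:euclidean-geometric-third-derivative}, exactly as you do. Your explicit check in the bounded-$\kappa$, bounded-$d$ case that $|\mathcal S_n^d|>0$ for $d\ge 4$, via Lagrange's four-square theorem, supplies a detail the paper leaves implicit.
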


Now, for any $\varepsilon \in \{-1,1 \}$ we define the analogue of $ R_{\varepsilon,\eta}$ by
\[
 R_{\varepsilon,\eta}^{\textrm{sph}}(t)
 =
 \prod_{j=1}^d
 \frac{H_{\eta_j}(re(t))}{H_0(re(t))}.
\]
Then we have the following analogue of \eqref{eq:euclidean-Cauchy-quotient}
\[
\varepsilon^n s_n^d\left(\eta + \frac{1-\varepsilon}{2} \mathbf{h} \right)= \frac{\displaystyle
  \int_{-1/2}^{1/2}
 \Phi^{\textrm{sph}}_{d,n}(t) R_{\varepsilon,\eta}^{\textrm{sph}}(t) \dd t}
 {\int_{-1/2}^{1/2}
 \Phi^{\textrm{sph}}_{d,n}(t) \dd t}.
\]
At this point, to complete the proof of Proposition \ref{prop:sphere-Gaussian-approximation}, we proceed exactly as in the proof of Proposition \ref{prop:euclidean-Gaussian-approximation}.

Finally, to prove Theorem \ref{thm:main:sphere} it suffices to consider \(p=2\) (by interpolation) and sufficiently large \(d\). Let \(\mathcal T_{n,4}^d\) be the approximating operators provided by Proposition \ref{prop:sphere-Gaussian-approximation} with \(N=4\). As in Section \ref{ssec: main from prop}, their maximal function is bounded on \(\ell^2(\mathbb Z^d)\) by Theorem \ref{thm:gaussian}. Moreover, Proposition \ref{prop:sphere-Gaussian-approximation} and Parseval’s identity give
\begin{align*}
\left\|\sup_{1\le n\le d^4}
|(\mathcal A_n^d-\mathcal T_{n,4}^d)f|\right\|_2
&\le
\sum_{1\le n\le d^4}
\|(\mathcal A_n^d-\mathcal T_{n,4}^d)f\|_2\\
&\lesssim
\sum_{1\le n\le d^4}\min\{n,d\}^{-4}
\|f\|_2
\lesssim \|f\|_2.
\end{align*}
Combining this estimate with \eqref{eq:max sphere large scale l2} concludes the proof of Theorem \ref{thm:main:sphere}.

\microtypesetup{expansion=false}

\end{document}